\numberwithin{equation}{section}
\newtheorem{theorem}{Theorem}[section]
\newtheorem{remark}[theorem]{Remark}
\newtheorem{lemma}[theorem]{Lemma}
\newtheorem{proposition}[theorem]{Proposition}
\newtheorem{definition}[theorem]{Definition}
\newcommand{\B}{\mathbf{B}}
\newcommand{\C}{\mathbf{C}}
\newcommand{\D}{\mathbf{D}}
\newcommand{\E}{\mathbf{E}}
\newcommand{\h}{\mathbf{H}}
\newcommand{\N}{\mathbf{N}}
\newcommand{\Z}{\mathbf{Z}}
\newcommand{\p}{\mathbf{P}}
\newcommand{\R}{\mathbf{R}}
\newcommand{\HH}{\mathbf{H}}
\newcommand{\varpsi}{\psi}
\newcommand{\CF}{\mathcal {F}}
\newcommand{\CK}{\mathcal {K}}
\newcommand{\CL}{\mathcal {L}}
\newcommand{\CN}{\mathcal {N}}
\newcommand{\CT}{\mathcal {T}}
\newcommand{\eps}{\varepsilon}
\newcommand{\SLE}{{\rm SLE}}
\newcommand{\CLE}{{\rm CLE}}
\newcommand{\wt}{\widetilde}
\newcommand{\ol}{\overline}
\newcommand{\ul}{\underline}
\newcommand{\BCLE}{\mathrm{BCLE}}
\newcommand{\ccwBCLE}{\BCLE^{\boldsymbol {\circlearrowleft}}}
\newcommand{\cwBCLE}{\BCLE^{\boldsymbol {\circlearrowright}}}
\newcommand{\wh}{\widehat}
\begin{document}

\title{Non-simple $\SLE$ curves are not determined by their range}

\author{Jason Miller}
\address{Statslab, Center for Mathematical Sciences, University of Cambridge, Wilberforce Road, Cambridge CB3 0WB, UK}
\email {jpmiller@statslab.cam.ac.uk}

\author{Scott Sheffield}

\address{
Department of Mathematics, MIT,
77 Massachusetts Avenue,
Cambridge, MA 02139, USA}
\email{sheffield@math.mit.edu}

\author{Wendelin Werner}

\address{Department of Mathematics, ETH Z\"urich, R\"amistr. 101, 8092 Z\"urich, Switzerland} 
\email{wendelin.werner@math.ethz.ch}

\begin{abstract}
We show that when observing the range of a chordal $\SLE_\kappa$ curve for $\kappa \in (4,8)$, it is not possible to recover the order in which the points have been visited.  We also derive related results about conformal loop ensembles (CLE): 
(i) The loops in a $\CLE_\kappa$ for $\kappa \in (4,8)$ are not determined by the $\CLE_\kappa$ gasket.  
(ii) The continuum percolation interfaces defined in the fractal carpets of conformal loop ensembles $\CLE_{\kappa}$ for  $\kappa \in (8/3, 4)$ (we defined these percolation interfaces
in earlier work, where we also showed there that they are $\SLE_{16/\kappa}$ curves) are not determined by the $\CLE_{\kappa}$ carpet that they are defined in. 
\end{abstract}

\date{\today}
\maketitle

\parindent 0 pt
\setlength{\parskip}{0.20cm plus1mm minus1mm}

\section{Introduction}
\label{sec:intro}

The Schramm-Loewner evolutions ($\SLE$) defined by Oded Schramm \cite{S0} in 1999  are the canonical conformally invariant, non-crossing, fractal curves which connect a pair of boundary points in a simply connected planar domain, and their importance has since been highlighted in numerous settings.  The SLE family is indexed by the positive real parameter $\kappa$, and there are three different regimes of $\kappa$ values which correspond to different sample path behavior of an $\SLE_\kappa$ curve  \cite{RS05}: it is a simple curve for $\kappa \in (0,4]$, a self-intersecting but not space-filling curve for $\kappa \in (4,8)$, and a space-filling curve for $\kappa \geq 8$.

In this work, we will answer the following question: Is it possible to recover the \emph{trajectory} $\eta$ of the $\SLE_\kappa$ (i.e.\ the map $t \mapsto \eta(t)$) when one observes its \emph{range} (i.e.\ the set of points $\eta ([0, \infty))$)? In other words, if one knows the \emph{set} of points that an $\SLE_\kappa$ visited, can one recover the \emph{order} in which they were traced?  The answer in the regime where $\kappa \in (4,8)$ is the following:

\begin{theorem}[$\SLE_\kappa$ range does not determine the path] 
\label{thm:path_not_determined}
Fix $\kappa \in (4,8)$, suppose that $\eta$ is an $\SLE_{\kappa}$ process from $0$ to $\infty$ in the upper half-plane, and consider some fixed $T \in (0, \infty]$.  Then the trajectory $\eta|_{[0,T]}$ is almost surely not determined by its range $\eta([0,T])$.  In fact, the conditional law of the trajectory given its range is almost surely non-atomic.
\end{theorem}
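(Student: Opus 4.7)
The plan is to show that, for each of the infinitely many ``bubbles'' that the $\SLE_\kappa$ curve pinches off, the direction in which $\eta$ traces the bubble's boundary is a function of the trajectory that, conditionally on the range, is a fair Bernoulli random variable, with conditional independence across distinct bubbles. Such an infinite family of i.i.d.\ Bernoulli$(1/2)$ random variables attached to the trajectory but uniform given the range forces the conditional law of $\eta|_{[0,T]}$ given its range to be non-atomic.

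Concretely, I would first set up the bubble decomposition: for $\kappa \in (4,8)$ and any $T \in (0, \infty]$, each bounded connected component $B$ of $\mathbb{H} \setminus \eta([0,T])$ is a bubble with a distinguished pinch point $x_B \in \partial B$ --- the self-intersection of $\eta$ (or a point where $\eta$ meets $\mathbb{R}$) at which $B$ was cut off. The sub-arc $\eta|_{[s_B, t_B]}$ between the entry time $s_B$ and the exit time $t_B$ traces $\partial B \cap \mathbb{H}$ as a loop based at $x_B$, and the collection of bubbles together with their pinch points is a deterministic function of the range. Next, using the domain Markov property of $\SLE_\kappa$, I would identify $\eta|_{[s_B, t_B]}$, conditioned on the trajectory outside $[s_B, t_B]$, as an $\SLE_\kappa$-type excursion from $x_B$ back to $x_B$ in the domain $B$. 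The Miller--Sheffield reversibility of $\SLE_\kappa(\rho)$ for $\kappa \in (4,8)$ and $\rho > -2$ then implies that such an excursion has a time-reversal-invariant law, so its direction of traversal is a fair Bernoulli random variable conditionally on everything else. Iterating the Markov property across distinct bubbles gives conditional independence of bubble orientations given the range, and the almost sure existence of infinitely many bubbles completes the argument.

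The main obstacle I anticipate is the excursion identification: each bubble sub-arc is ``pinned'' at both ends at $x_B$ and lives in a random conformal domain $B$ whose boundary data is inherited from the surrounding configuration, so its law must be carefully identified as a specific $\SLE$-type excursion measure to which Miller--Sheffield reversibility can be applied. A related subtlety is that the simultaneous reversal of all bubble orientations must still yield a trajectory distributed as $\SLE_\kappa$; this amounts to a conditional independence statement for distinct bubble excursions given the ``skeleton'' of $\eta$ outside the bubbles, which should follow from iterated applications of the Markov property but has to be established carefully. Finally, one must check that reversing a bubble actually changes the parametrized trajectory, which reduces to the almost sure asymmetry of $\SLE_\kappa$ bubbles under time-reversal.
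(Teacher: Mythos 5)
The central mechanism of your proposal --- that, given the range (or given everything outside the excursion), the direction in which $\eta$ traces a bubble is a fair coin --- is false, and the appeal to reversibility does not repair it. Reverse a bubble in situ and you generically leave the support of $\SLE_\kappa$: if the bubble is pinched off at a double point $x_B$, the four strand germs of $\eta$ at $x_B$ come in a circular order for which the original transitions (incoming$\to$loop-start, loop-end$\to$outgoing) form a non-crossing pairing, while the transitions required to traverse the same loop backwards (incoming$\to$loop-end, loop-start$\to$outgoing) form the \emph{crossing} pairing; the reversed trajectory is therefore a self-crossing path, which $\SLE_\kappa$ almost surely is not, so its conditional probability given the range is $0$, not $1/2$. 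If instead the bubble is cut off by a hit of $\R$ (so the boundary arc has two distinct endpoints), reversing that arc does not even yield a continuous path. Miller--Sheffield reversibility concerns the law of the time-reversal of an entire $\SLE_\kappa(\rho)$ viewed as a path in its own domain with the marked points swapped; it does not assert that the conditional law of an excursion embedded inside an ambient non-crossing curve is invariant under reversing that excursion while freezing the rest, and indeed it cannot, by the topological obstruction above. (A secondary inaccuracy: for $\kappa\in(4,8)$ the boundary of a complementary component is in general \emph{not} traced over a single time interval $[s_B,t_B]$; between the first and last visits the curve can wander elsewhere, and $\partial B$ is typically built from several disjoint time intervals, so there is no single ``excursion'' to reverse.)

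The genuine non-determinism lies elsewhere: at a double point the range does not tell you \emph{which of the two non-crossing hook-ups} of the four strands occurs, i.e.\ the order in which the strands are visited, not their orientation. Switching the hook-up at a single double point turns a chordal path into a path plus a disjoint loop (this is why it proves the $\CLE$ gasket statement but not the path statement), so to keep a single chordal curve with the same range one must switch simultaneously at a \emph{pair} of intertwined double points, which permutes the order of the three intermediate strands while preserving their directions. Making this into a proof requires exactly the ingredients of the paper that your proposal bypasses: the reduction to $\SLE_\kappa(\kappa-6)$ via the common bubble decomposition (Lemma~\ref{lem:not_determined_same}), the conformal invariance of the hook-up probability (Lemmas~\ref{lem:paths_confinv}--\ref{lem:sle_k_k_minus_6_reversal}) --- note that this probability is a nontrivial function $f_\kappa$ of a cross-ratio, not $1/2$, so no symmetry argument is available --- and the multi-scale first/second moment construction showing that well-separated pivotal configurations occur in two prescribed regions simultaneously, so that the rerandomization Markov step can be performed and a limiting coupling of two $\SLE_\kappa(\kappa-6)$'s with equal range but different visit order extracted. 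As written, your argument has no valid resampling move, so the non-atomicity conclusion does not follow.
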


We note that the answer to this question in the other regimes of $\kappa$ values is trivial:  For a simple curve one can always recover the trajectory given its range, and  for a space-filling curve, the range does provide no information at all.

In the proof of this result, we will view our $\SLE_\kappa$ path as living in an ambient space with many other $\SLE_\kappa$ paths around.  More specifically, we will take here this space to be a collection of loops which come from a conformal loop ensemble $\CLE_\kappa$  \cite{SHE_CLE,SHE_WER_CLE} with $\kappa \in (4,8)$.  This is in contrast to several other recent works, in which it was natural to use the Gaussian free field (GFF) as the structure in which the path is naturally embedded \cite{SHE_WELD,DUB_PART,MS_IMAG,MS_IMAG2,MS_IMAG3,MS_IMAG4}.

We will also answer two natural questions about $\CLE$. Recall that a $\CLE$ describes the distribution of a natural random collection of loops in a simply connected domain.  The law of a $\CLE$ in a simply connected domain is conformally invariant (and one can therefore always view it as the conformal image of a $\CLE$ defined in the unit disk) and it is described by the same parameter~$\kappa$ as SLE, but with the constraint that $\kappa$ has to be in the interval $(8/3,8)$.  The loops of a $\CLE_\kappa$ are $\SLE_\kappa$-type paths.  Again, there are two ranges depending on whether or not $\kappa > 4$.  When $\kappa \in (8/3, 4]$, the loops are simple, disjoint and do not touch the boundary of the domain they are defined in, whereas when $\kappa \in (4,8)$, the loops are non-simple (but non-self-crossing) and can touch each other and the boundary. 

Our first result about $\CLE$ will deal with the latter case (where $\kappa \in (4,8)$). The set of points that is not surrounded by any of the loops (in the sense that the index of all the loops around those points is $0$) is a random closed set called the $\CLE_\kappa$ gasket, and can be viewed as the natural conformally invariant random version of the Sierpinski gasket. Because the individual loops of the $\CLE_{\kappa}$ touch each other and the boundary,  it is a priori not clear whether one can recover the individual loops by just looking at the gasket.  Indeed, it is not possible: 

\begin{theorem}[$\CLE_{\kappa}$ gasket does not determine $\CLE_{\kappa}$ loops]
\label{thm:cle_not_determined}
Fix $\kappa \in (4,8)$ and suppose that $\Gamma$ is the collection of loops in a (non-nested) $\CLE_{\kappa}$.  Then $\Gamma$ is almost surely not determined by the $\CLE_{\kappa}$ gasket.  In fact, the conditional law of $\Gamma$ given its gasket is almost surely non-atomic.
\end{theorem}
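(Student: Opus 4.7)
Our plan is to deduce Theorem~\ref{thm:cle_not_determined} from Theorem~\ref{thm:path_not_determined} by realizing the collection $\Gamma$ through an $\SLE_\kappa$-type exploration of the $\CLE_\kappa$. The key point will be that while $\Gamma$ is encoded by the \emph{trajectory} of this exploration, the gasket depends on the exploration only through its \emph{range}; Theorem~\ref{thm:path_not_determined} then yields the desired non-atomicity of $\Gamma$ conditional on the gasket.

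First we would set up the exploration. Fix a boundary point of $\D$ and consider the standard branching $\SLE_\kappa(\kappa-6)$ exploration of the $\CLE_\kappa$ emanating from that point (one could instead use the $\BCLE$-based construction of \cite{cle_percolations}). The individual branches $\eta$ are non-simple $\SLE_\kappa$-type curves which trace portions of the loops of $\Gamma$, and the loop collection $\Gamma$ is recovered as a measurable function of the branches' trajectories: each loop $L \in \Gamma$ is a concatenation of sub-arcs of these branches together with arcs of the boundary (and of previously drawn branches).

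Next we would verify that the gasket is a function of the range of the exploration alone. The gasket is the complement in $\D$ of the union of the loop interiors, and the filled loops are recovered topologically as the regions "cut off" from the reference boundary point by the branches; this structure depends on the branches only through their ranges. In contrast, the partition of a branch $\eta$ into sub-arcs belonging to individual loops uses the temporal order in which $\eta$ visits its self-intersection points, because at each such self-intersection neighbouring bubbles may be grouped into the same loop or split between two loops according to that order.

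Applying Theorem~\ref{thm:path_not_determined} to $\eta$ then gives, conditional on the range of $\eta$, a non-atomic law for its trajectory. The main obstacle is to transfer this non-atomicity to $\Gamma$: one needs to show that at a positive-density set of self-intersection points of $\eta$, distinct temporal orderings of visits yield genuinely distinct groupings of bubbles into loops (rather than merely different parametrizations of the same unparametrized loops). This should follow by a local analysis near the self-intersection points, parallel to the arguments underlying Theorem~\ref{thm:path_not_determined} and relying on the fact that, near a typical self-intersection of $\eta$, the two orderings place the adjacent bubbles on different sides of the "double point," thereby reassigning them to different loops.
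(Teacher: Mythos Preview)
Your reduction has a genuine gap at the level of the $\sigma$-algebras being conditioned on. Theorem~\ref{thm:path_not_determined} tells you that the trajectory of an exploration branch $\eta$ is non-atomic given its \emph{range}, but Theorem~\ref{thm:cle_not_determined} asks for non-atomicity of $\Gamma$ given the \emph{gasket}. These two conditionings are not comparable. On the one hand, the gasket carries information about loops of $\Gamma$ that a single branch $\eta$ never visits, so it is not a function of $\mathrm{range}(\eta)$. On the other hand, $\mathrm{range}(\eta)$ is \emph{not} a function of the gasket either: the paper's one-point resampling operation (merging two touching loops into one, or splitting one loop into two) leaves the gasket unchanged while altering which arcs belong to which loop, and hence altering which arcs are traced by the exploration branch. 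So knowing that the trajectory is non-atomic given $\mathrm{range}(\eta)$ does not let you conclude anything about non-atomicity given the gasket: the extra information in the gasket could in principle pin down the trajectory.

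There is also a logical-order issue: in the paper, Theorem~\ref{thm:path_not_determined} is established \emph{after} Theorem~\ref{thm:cle_not_determined}, using a refinement of the same pivotal-measure construction (two intertwined pivotals instead of one). Invoking Theorem~\ref{thm:path_not_determined} as a black box to prove Theorem~\ref{thm:cle_not_determined} would therefore be circular within the paper's development. More substantively, the two-point switch underlying Theorem~\ref{thm:path_not_determined} is designed to preserve the range of a single path, which is a different invariant from the gasket; it is not at all clear that this operation changes the unparametrized loop collection~$\Gamma$ rather than merely reparametrizing the same loops. The ``main obstacle'' you flag---showing that different temporal orderings near a double point yield genuinely different loop groupings---is precisely the heart of the matter, and your sketch does not supply an argument for it.

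The paper instead works directly with the \emph{one-point} resampling: it builds (via first and second moment estimates) a measure on special points where four CLE strands meet, shows the hook-up probability at such a point is a conformally invariant function of a cross-ratio bounded away from $0$ and $1$ (Lemma~\ref{lem:paths_hookup} and Lemma~\ref{lem:paths_hookup2}), and then runs a Markov chain that picks such a point and resamples the hook-up. This preserves the gasket but merges or splits loops with positive probability, and a zero-one argument upgrades this to the almost-sure non-atomicity statement.
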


We now turn to our second result for $\CLE$, which is focused on the case of $\CLE_\kappa$ for $\kappa \in (8/3, 4)$. Recall that in this case, the loops in the $\CLE_\kappa$ form a disjoint collection of simple loops that also do not touch the boundary, so that the set of points that are encircled by no loop (this set is now called the $\CLE_\kappa$ carpet) can be viewed as a natural conformally invariant random version of the Sierpinski carpet. In \cite{cle_percolations}, we have defined and described natural continuous percolation interfaces (CPI) within such $\CLE_\kappa$ carpets, that can intuitively describe boundaries of critical percolation clusters within these random fractal sets. These interfaces turn out to be variants of $\SLE_{16/\kappa}$ curves, that are coupled with the $\CLE_\kappa$.

\begin{theorem}[Continuous percolation within $\CLE_\kappa$ is random]
\label{thm:percolation_not_determined}
Fix $\kappa \in (8/3,4)$, suppose that $\Gamma$ is a $\CLE_\kappa$, and that $\eta$ is an $\SLE_{16/\kappa}$-type curve coupled with $\Gamma$ as a CPI in the sense of \cite{cle_percolations}.  Then the range of $\eta$ (and therefore also the path) is almost surely not determined by $\Gamma$. In fact, the conditional law of $\eta$ given $\Gamma$ is almost surely non-atomic. 
\end{theorem}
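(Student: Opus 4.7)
My plan is to prove Theorem~\ref{thm:percolation_not_determined} by exploiting the exploration construction of the coupling $(\Gamma, \eta)$ from \cite{cle_percolations}, which realizes $\eta$ via an infinite sequence of independent coin flips attached to loops of $\Gamma$. The strategy will be to show that arbitrarily late coin flips can be resampled without changing $\Gamma$, and with positive conditional probability of changing $\eta$; this rules out any atom in the conditional law of $\eta$ given $\Gamma$.

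First I would recall the CPI construction from \cite{cle_percolations}. For $\kappa \in (8/3,4)$, a CPI from a boundary point $a$ to $b$ can be built by an exploration of the $\CLE_\kappa$ loops starting from $a$: upon each encounter with a loop $L$ of $\Gamma$, an independent Bernoulli coin flip (with a fixed parameter $p \in (0,1)$ depending on $\kappa$) decides whether $\eta$ passes to the left or to the right of $L$, and the interface continues accordingly. This exploration satisfies a domain Markov property: given an initial segment $\eta|_{[0,\tau]}$ and the labels of the loops discovered by that time, the continuation of the coupling is distributed as a fresh CPI in the complementary domain paired with the remaining (independent) $\CLE_\kappa$ there.

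Next I would introduce a local resampling. Fix a small open disk $D$ compactly contained in the interior of the upper half-plane, let $\tau_D$ be the first time $\eta$ enters $D$, and let $L_D$ be the first loop of $\Gamma$ encountered by $\eta$ inside $D$. On the positive-probability event that both are finite, the domain Markov property identifies the sidedness of $L_D$ as a fresh Bernoulli flip given $\Gamma$ and $\eta|_{[0,\tau_D]}$; in particular, independently resampling this flip (together with all subsequent ones) preserves the joint law of $(\Gamma, \eta)$ while leaving $\Gamma$ pointwise unchanged. Since the two possible sidednesses of $L_D$ give macroscopically distinct continuations of $\eta$ with positive conditional probability, the resampled curve $\eta'$ differs from $\eta$ with positive conditional probability given $\Gamma$. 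Iterating this resampling over a sequence of shrinking disks $D$, and invoking a standard zero-one-law argument, rules out any atom in the conditional law of $\eta$ given $\Gamma$, giving the claimed non-atomicity.

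The main obstacle, in my view, is the step that identifies the sidedness of $L_D$ as a genuine coin flip given all of $\Gamma$, not merely given the past of the exploration. A priori, $\Gamma$ could encode the label of $L_D$ through, for instance, the geometry of nearby loops that had not yet been discovered by $\eta$ at time $\tau_D$. Ruling this out requires the right resampling statement for the CPI construction of \cite{cle_percolations}: namely, that the sequence of sidedness flips along the exploration is independent of $\Gamma$ as a whole, not just of its discovered part. I expect this to follow from the construction of a CPI as an $\SLE_{16/\kappa}(\rho^L;\rho^R)$-type curve coupled with $\Gamma$ using fresh Bernoulli input at each encountered loop, but verifying it carefully will be the technical heart of the proof.
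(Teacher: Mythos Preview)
Your proposal rests on a misreading of the CPI construction. You describe the CPI as built from a sequence of independent Bernoulli flips, one at each encountered loop of~$\Gamma$, with parameter $p\in(0,1)$. But the theorem must in particular cover the extremal case $\beta=\pm 1$ (equivalently $p\in\{0,1\}$), in which the CPI \emph{always} passes on the same side of every $\CLE_\kappa$ loop; this is precisely the case the paper treats explicitly. In that case there are no coin flips at all, so there is nothing to resample and your argument is empty. Conversely, in the genuinely labeled case $p\in(0,1)$, the sidedness bits are part of the data of the labeled~$\Gamma$: resampling the label of $L_D$ changes the labeled $\CLE_\kappa$, so the resampling does not leave~$\Gamma$ fixed, and the extension of the theorem to labeled carpets (which is the nontrivial content there) is again not addressed. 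Your ``main obstacle'' paragraph almost identifies the problem, but frames it as $\Gamma$ possibly \emph{encoding} the label; the actual situation is that either the label is literally part of~$\Gamma$, or it does not exist.

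The real source of randomness of the CPI given~$\Gamma$ is not a side-choice at each loop, but the indeterminacy of the $\SLE_{\kappa'}$ trunk within the $\CLE_\kappa$ carpet. The paper accesses this by working in the dual $\CLE_{\kappa'}$ picture: one runs two pieces of the $\CLE_{\kappa'}$ exploration (one from $-i$, and one started from a point on an already-discovered $\CLE_\kappa$ loop after conditioning on the $\CLE_\kappa$ loops touching the first piece), obtaining four $\SLE_{\kappa'}$ strands whose hookup configuration is governed by Lemma~\ref{lem:paths_hookup}. The pivotal-region machinery of Section~\ref{sec:proofs} then shows that this hookup can be resampled while leaving the $\CLE_\kappa$ unchanged, and that the two hookups yield genuinely different $\CLE_{\kappa'}$ gaskets, hence different CPIs. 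None of this is visible from the ``coin flip at each loop'' heuristic.
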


In fact, this statement also holds in the case of the ``labeled'' $\CLE_\kappa$ for $\kappa \in (8/3, 4)$ which are described in \cite{cle_percolations}, where for each of the $\CLE_{\kappa}$ loops, one tosses an independent biased coin to decide whether it is open or closed for the considered percolation process that one constructs. We note that the analog of Theorem~\ref{thm:percolation_not_determined} for the labeled $\CLE_4$ is known to be false (see \cite{cle_percolations}): the continuous percolation interfaces in a labeled balanced (one uses a fair coin to choose the labels) $\CLE_4$ are deterministic functions of the labeled $\CLE_4$ itself. 

Theorem~\ref{thm:percolation_not_determined} also sheds some light about the coupling between the GFF and the $\CLE_\kappa$ carpets when $\kappa \in (8/3, 4)$, as it shows that in these couplings, the GFF is not a deterministic function of the nested $\CLE_\kappa$ carpets.

\begin{figure}[ht!]
\includegraphics[scale=1.2]{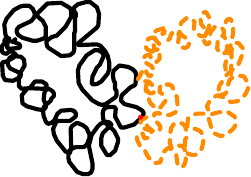} \quad
\includegraphics[scale=1.2]{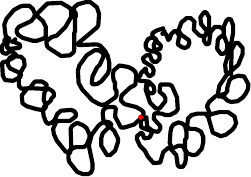}
	\caption{\label{fig:merging_loops}Idea of the proof of Theorem~\ref{thm:cle_not_determined}:  Choosing a pivotal and resampling its state can merge loops without changing the gasket.}
\end{figure}

\begin{figure}[ht!]
\includegraphics[scale=0.8]{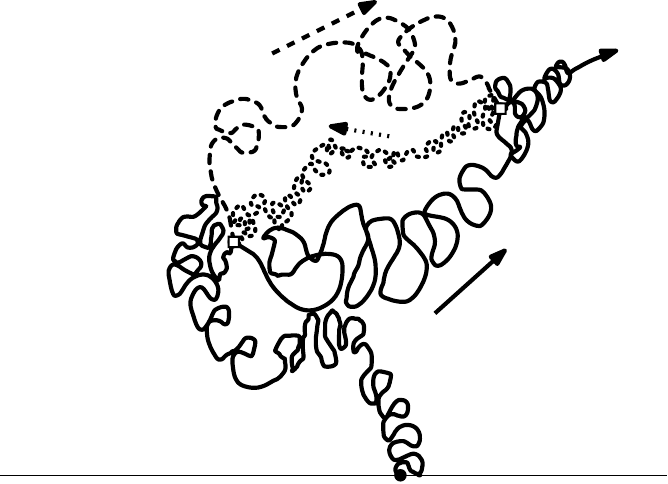}
	\caption{\label{fig:pair_of_pivotals}Idea of the proof of Theorem~\ref{thm:path_not_determined}: An $\SLE_{\kappa}$ path with $\kappa \in (4,8)$ and two intertwined double points. The path depends on whether it visits the plain part before the dashed part or not, but the range does not. Choosing two such double points according to some well-chosen measure $\mu'$ and resampling the order between dashed and plain will preserve the range but not the path}.
\end{figure}

The rough idea of the proof of Theorem~\ref{thm:cle_not_determined} will be to construct a measure $\mu$ which is supported on a certain set of exceptional points.  These points are either intersection points between two distinct macroscopic CLE loops, or double points on one single loop.  In both cases, there are four different macroscopic strands that emanate from these points. We will then show that if one performs the Markov step of picking a point at random using $\mu$, and then resamples the way that the four macroscopic strands are hooked up at that point, one roughly preserves the law of the global $\CLE$. During this resampling step, the gasket is preserved, but one can merge two loops into one or split one loop into two (see Figure~\ref{fig:merging_loops}). This shows that it is not possible to identify the individual loops by just observing the gasket.

The proofs of Theorem~\ref{thm:percolation_not_determined} and of Theorem~\ref{thm:path_not_determined} will follow a similar idea, except that in the latter case, one will need to construct a measure $\mu'$ on special \emph{pairs} of intertwined double points on the path, and the Markov step will consist in switching simultaneously the hookup configuration between the four strands at both points in order to preserve the range of the path (see Figure~\ref{fig:pair_of_pivotals}).  This then leads to a coupling of a pair of $\SLE_{\kappa}$ paths which have the same range but visit their common range in a different order.

The construction and non-triviality of the measures $\mu$ and $\mu'$ is based on the multi-scale second moment method, which has also been used in many instances in the last decades to study the Hausdorff dimensions of random fractals.

\section{Preliminaries}
\label{sec:prelim}

In this section, we make some comments and review some preliminary facts before proceeding to the proofs of our main theorems. 
In Section~\ref{subsec:percolation_analogy}, we will explain a possible approach to proving our results when $\kappa=6$ using properties of Bernoulli percolation.  In Sections~\ref{subseckapparho}--\ref{subsec:trunk_construction}, we will review the construction of $\CLE_\kappa$ when $\kappa \in (4,8)$ as well as the construction when $\kappa \in (8/3,4]$ given in \cite{cle_percolations}.

\subsection{Percolation pivotal points analogy}
\label{subsec:percolation_analogy}

Although it will not be used directly in our proofs, it is worthwhile to first describe a possible proof of Theorem~\ref{thm:path_not_determined} in the special case where $\kappa=6$. Indeed, our proof will have some analogies with the strategy that we will now outline.

The $\SLE_6$ curve is known to be the scaling limit of percolation interfaces and the way in which the discrete interface approaches the $\SLE_6$ in the scaling limit is well-understood \cite{S01,CN06,WWlnperco}. In particular, the double points of $\SLE_6$ correspond to the scaling limit of the double points in the discrete percolation interface (see for instance 
\cite{WWlnperco}). These discrete double points form a subset of the set of points in the percolation configuration where a so-called four-arm event holds (two disjoint closed and two open arms touch these points in alternating circular order, and they create four percolation interface strands). Furthermore, thanks to the work of Garban, Pete and Schramm \cite{GPS1,GPS2}, the way in which the counting measure on such double points approaches a natural measure on the set of double points of $\SLE_6$ in the scaling limit is well-understood.

Suppose now that one considers a long percolation interface, and consider two given disjoint macroscopic domains (here the domains will be thought of as fixed, while the mesh of the lattice will tend to zero). With positive probability, it will happen that the percolation interface visits these two domains twice and create intertwined double points as in Figure~\ref{intertwined2}.  On this event, we can decide to choose at random a pair of such intertwined double points using the counting measure on such pairs, and to change the status of both of these two points simultaneously. Note that this will basically not change the range of the percolation interface, but only the order in which the three strands of the percolation interface that join the two double points are traced. This indicates that the probability of the obtained configuration is comparable to the probability of the initial one (before switching how the strands are hooked up), which in turn indicates that in the scaling limit, the $\SLE_6$ 
curve cannot be a deterministic function of its range.

Let us note that in order to make the previous idea work, it is sufficient to use the counting measure on some ``special'' intertwined double points which satisfy some extra conditions. For instance, one can  use points where the four strands are well-separated at some macroscopic scale. Such points are easier to work with when obtaining uniform estimates: In particular, when one conditions on the event that such a well-separated four-arm event occurs at two given points, then one can see that the conditional probability that these two points end up being intertwined on the percolation interface is bounded from below. Instead of sampling according to the counting measure along the curve, one can (up to constants in the probabilities) therefore first choose the two points at random in some uniform way in two prescribed domains, then sample the nice four-arm events in their respective macroscopic neighborhood, and then finally the percolation configuration in the remaining domains, in such a way that they hook 
up the arms so that the percolation interface visits these two points, and then finally the state of these two  points.

\begin{figure}[ht!]
\includegraphics[scale=0.6]{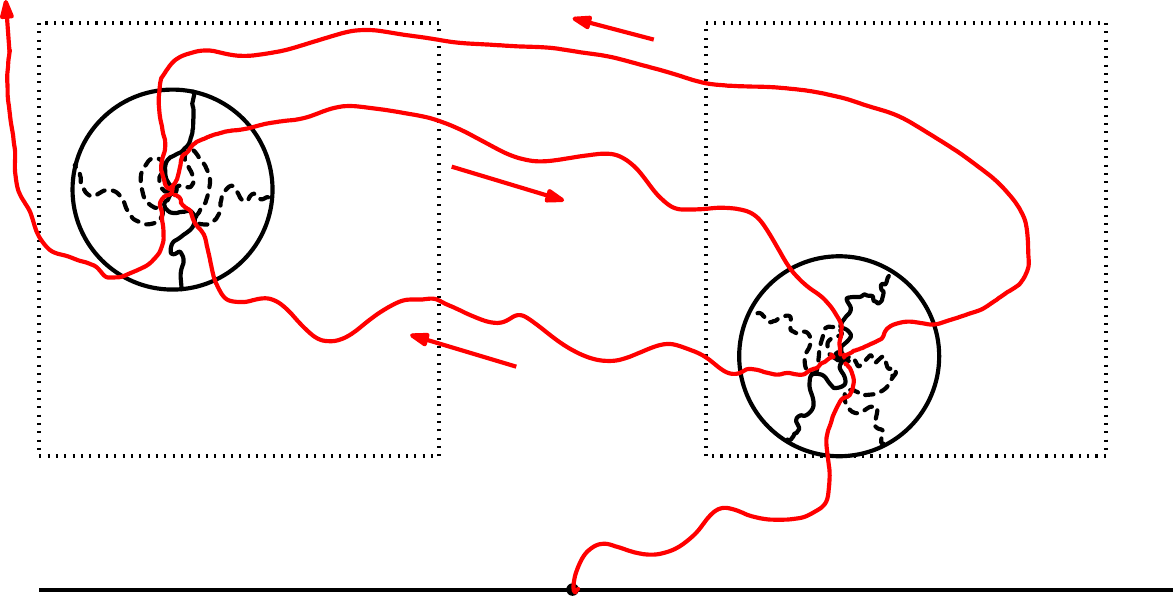}
	\caption{Choose two points in the dotted regions and condition on the independent nice four-arm events for both of them. Then, the conditional probability of the interface creating the intertwined double points as shown in the figure is bounded from below, and the order that the three strands are visited changes if one changes the way in which the strands are hooked up at those 
	double points. \label{intertwined2}}
\end{figure}

Our approach to the general $\SLE_{\kappa}$ case will have a similar flavor, although we will work directly in the continuum. The percolation configuration will be replaced by a $\CLE_\kappa$ instance.  We will discover the $\CLE_\kappa$ first near the two points $z,w$ using branches of the $\CLE_\kappa$ exploration tree.  We will also define some ``nice four-arm type events'' and use the conformal Markov property of $\SLE_\kappa$ and $\CLE_\kappa$  to control the correlations between what happens in different regions.

\subsection {Background on $\SLE_\kappa (\kappa -6)$ and $\SLE_\kappa (\rho; \kappa -6 - \rho )$ processes} 
\label {subseckapparho}

Certain generalizations of SLE processes will play in important role in our paper, and we briefly review their definition and basic properties here.

\subsubsection*{Loewner chains and SLE} 
Recall that Loewner's equation allows one to define a increasing family of compact hulls $(K_t)_{t \ge 0}$ in the closed upper half-plane $\overline \HH$,
when one is given a continuous real-valued function $W$, using the following procedure: 
Define for each $z \in \overline \HH$ the solution $(g_t (z))_{t \le \tau (z)}$ to the ordinary differential equation 
\[ \partial g_t (z) = \frac {2} { g_t (z) - W_t } \]
started from $g_0 (z)= z$, up to the (possibly infinite) time $\tau(z)$ which is the first time at which $g_t (z) - W_t$ hits $0$. The set $K_t$ is then defined to be $\{ z \in \overline \HH, \tau (z) \le t \}$ and $g_t$ turns out to be the unique conformal transformation from $H_t := \HH \setminus K_t$ into $\HH$ such that $g_t (z) - z = o(1)$ as $z \to \infty$.  
The family $(K_t)_{t \ge 0}$ is the Loewner chain driven by the function $W$. 

In all cases that we will be dealing with, it turns out that there exists a continuous two-dimensional curve $\gamma$ with the property that at all times $t$, $H_t$ is the unbounded connected component of $\HH \setminus \gamma [0,t]$. One then says that the Loewner chain is generated by this curve $\gamma$. This curve is then also uniquely determined by the driving function $W$. Note that the existence of such a curve $\gamma$ does not hold for all driving functions $W$, but it is actually now known to hold for all the random driving functions $W$ that we will discuss in the present paper (but proving this has been a quite challenging endeavor). 

\medbreak 

When $\kappa$ is some positive constant, and $(B_t)_{t \ge 0}$ is a standard one-dimensional Brownian motion, the Loewner chain obtained when $W_t := \sqrt {\kappa} B_t $ is Schramm's $\SLE_\kappa$ 
\cite {S0}. In the present paper, we will actually consider generalization of the SLE curves only for $\kappa \in (2, 8)$ (so, we will omit to describe here what happens for $\kappa \ge 8$). 
One can recall \cite {RS05} that $\SLE_\kappa$ is almost surely generated by a simple continuous curve when $\kappa \in (0, 4]$ and generated by a continuous curve with a dense family of double points when $\kappa \in (4, 8)$.
The scaling property of Brownian motion ensures that $\SLE_\kappa$ is conformally invariant and can be defined in any simply connected domain with two marked boundary points (up to time-reparameterization of the curve).
The Markov property of Brownian motion implies that the SLE possesses its important conformal Markov property, see \cite {S0}.

\subsubsection*{$\SLE_\kappa (\rho_1 ; \rho_2)$ for $\rho_1, \rho_2 > -2$}
Let us now describe the $\SLE_\kappa (\rho_1 ; \rho_2)$ processes when $\rho_1 , \rho_2 > -2$ that will play an important role in the present paper. These are the natural generalizations of SLE where one keeps track of two additional marked boundary points. Suppose that $\kappa \in (0, 8)$. 
The $\SLE_\kappa (\rho_1 ; \rho_2)$ from $0$ to infinity in the upper half-plane is the Loewner chain driven by the random function $W$ that is defined by 
\[ dW_t = \sqrt{\kappa} dB_t + \frac{\rho_1}{W_t - V_t^1} dt +  \frac{\rho_2}{W_t - V_t^2} dt, \quad dV_t^j = \frac{2}{V_t^j - W_t} dt, \quad V_0^j = 0 \quad\text{ for }  \quad j=1,2,\]
with the condition that $V_t^1 \le W_t \le V_t^2$ for all $t \ge 0$ (we sometimes say that $V_0^1 = 0_-$ and $V_0^2 = 0_+$ to indicate this).  
When $\rho_1 =0$ or $\rho_2 =0$, these are the usual $\SLE_\kappa (\rho)$ processes with just one additional marked point that had been introduced in \cite {LSW04}. The fact that these $\SLE_\kappa (\rho_1; \rho_2)$ are 
well-defined and generated by continuous curves for all these choices of $\rho_1, \rho_2 > -2$ has been derived in \cite{MS_IMAG}. Again, because the driving function $W$ of these processes 
satisfies Brownian scaling, the obtained Loewner chain and curve are conformally invariant, so that it is for instance possible to define an $\SLE_\kappa (\rho_1 ; \rho_2)$ from $0$ to $x \in \R \setminus \{ 0 \}$ in the upper half-plane as the image of the previous curve $\gamma$ under a conformal automorphism $\phi$ of $\HH$ with $\phi (0)=0$ and $\phi (\infty) = x$.  

When $\kappa \in (2, 8)$ and $\rho \in (-2, \kappa -4)$, then the $\SLE_\kappa ( \rho ; \kappa -6 - \rho)$ turns out to have the very special target-invariance property (see \cite{dub2007commutation,SCHRAMM_WILSON} for the case 
where $\rho = 0$): Suppose that for each $y$, 
${\mathcal L}_y$ is the law of an $\SLE_\kappa 
(\rho ; \kappa -6 - \rho)$ curve from $0$ to $y \in \R \setminus \{ 0 \}$ in the upper half-plane. When $\gamma_y$ is such a curve, we  let $\tau_y (y')$ be the first time at which $\gamma_y$ does disconnect $y'$ from $y$. 

\begin {definition}[Target-invariance] 
We say that the family $({\mathcal L}_y)_{y \in \R \setminus \{ 0 \} }$ has the target-invariance property if for any $y \not= y'$ in $ \R \setminus \{ 0 \}$, it is possible to couple a realization $\gamma_y$ 
of ${\mathcal L}_y$ and a realization $\gamma_{y'}$ of ${\mathcal L}_{y'}$ in such a way 
that $\gamma_y$ up to $\tau_y (y')$ and $\gamma_{y'}$ up to $\tau_{y'} (y)$ coincide almost surely (up to time-reparameterization). 
\end {definition} 

For the target-invariant family $({\mathcal L}_y)$ defined above, it is then possible to actually couple all the paths $\gamma_y$ for $y \in \R \setminus \{ 0 \}$ on the same probability space, 
in such a way that for all $y \not= y'$ in $\R \setminus \{ 0 \}$, the paths $\gamma_y$ up to $\tau_y (y')$ and $\gamma_{y'}$ up to $\tau_{y'} (y)$ coincide almost surely (up to time-reparameterization) -- one can for instance 
first do this for any given countable family of target points $y$, and then to invoke a Kolmogorov extension result: 

\begin {definition}[Branching tree] 
This path-valued process $( \gamma_y)_{y \in \overline \R \setminus \{ 0 \}}$ is called the $\SLE_\kappa (\rho ; \kappa - 6 - \rho)$ branching tree in $\HH$, rooted at $0$ and targeting all boundary points.  
\end {definition}
Note that we view this here as a path-valued process, and that we do not discuss here the ``regularity'' with respect to $y$. All events that we will discuss will 
in fact depend on a given countable  (say, dense) family of paths $\gamma_{y_1},\gamma_{y_2}, \ldots$ and there will be no measurability issue for those. 

One particular case is when $\kappa \in (4, 8)$ and $\rho = 0$ (or $\rho = \kappa - 6$). One then obtains the $\SLE_\kappa ( \kappa -6)$ branching tree, which (as we will recall)
is then used to define $\CLE_\kappa$ for $\kappa \in (4,8)$. In that case, the target-invariance property and the branching tree are actually naturally defined for a richer class of 
$\SLE_\kappa (\kappa -6)$, that 
target also any interior point $y \in \HH \setminus \{ 0 \}$ (instead of only boundary points).

\subsubsection*{$\SLE_\kappa^\beta (\kappa -6)$ for $\kappa \in (8/3, 4)$}
Let us now briefly recall some features of the $\SLE_\kappa (\kappa -6)$ processes when $\kappa \in (8/3, 4)$. Care and some non-trivial observations are required to define 
these processes, because in this case $\kappa - 6 < -2 $, so that the drifts can become singular (i.e., not absolutely integrable) when one tries to   
directly generalize the previous definition of $\SLE_\kappa (\rho)$ processes via
\[ dW_t = \sqrt{\kappa} dB_t +   \frac{\rho}{W_t - V_t} dt, \quad dV_t = \frac{2}{V_t - W_t} dt, \quad V_0 = 0 \]
to these cases (the main observation is that formally, the process 
$((W_t - V_t ) / \sqrt {\kappa})$ would then be a Bessel process of small dimension, so that $\int ds / | W_s - V_s| = \infty$ in the neighborhood 
of the times at which $W_t - V_t =0$).

It turns out that (we refer to Section 3 of \cite {cle_percolations} for a detailed self-contained presentation and references to the original papers) it is still possible 
to make sense of $\SLE_\kappa ( \kappa - 6)$ processes in the range $\kappa \in (8/3, 4)$: 
\begin {itemize} 
 \item One can define a $\SLE_\kappa^1 (\kappa -6)$ process and a $\SLE_\kappa^{-1} (\kappa -6)$ process, which have the property that the sign of $W_t - V_t$ remains constant (i.e., non-negative or non-positive).  
 These are the totally asymmetric $\SLE_\kappa (\kappa -6)$ processes. 
 \item For general $\beta \in (-1, 1)$, one can define the side-swapping $\SLE_\kappa^\beta (\kappa -6)$ process, where (loosely speaking) the sign of each excursion of the process $W - V$ away from the origin is chosen 
 independently to be $+$ or $-1$ with respective probability $(1 \pm \beta)/2$.
\end {itemize}
The scale-invariance of these processes (and therefore the fact that one can define them in any simply connected domain) is basically part of their construction. 
The continuity of all these $\SLE_\kappa^\beta (\kappa -6)$ curves was a challenging question and has been established in \cite{cle_percolations}. 
Their target-invariance property is basically part of their definition, and this enables 
to define (for each value of $\beta$) the $\SLE_\kappa^ \beta (\kappa -6)$ branching trees (that we will use in the present paper, and that provide one way to define conformal loop ensembles for $\kappa \in (8/3, 4)$). 

One can emphasize here that for these $\SLE_\kappa^\beta (\kappa -6)$, it is 
also possible (and easy) to define the target-invariance property and the branching tree for a collection of paths $(\gamma_y)_{y \in \overline \HH \setminus \{ 0 \}}$ that target also any 
interior point $y \in \HH$.  

\subsection{$\CLE_{\kappa}$ background when $\kappa \in (4,8)$}
\label{subsec:cle_bacground}

We will now give a very brief review of the construction and the main properties of $\CLE_{\kappa}$ for $\kappa \in (4, 8)$; mind that some of the statements that we will survey here do {\em not} hold for $\kappa \in (8/3, 4]$. These results follow fairly directly from \cite{SHE_CLE} -- see also \cite{cle_percolations,MSW_CLE_GASKET} for more extensive reviews.  
In the present paper, we will focus primarily on the non-nested versions of conformal loop ensembles, where each given point in the domain is almost surely surrounded by one CLE loop. 

Non-nested $\CLE_\kappa$ for $\kappa \in (4,8)$ are random collections of loops, that are the conjectural scaling limits of the collection of outermost 
interfaces for a critical FK model for $q=2+2\cos(8\pi/\kappa) \in (0,4)$ with free boundary conditions on a deterministic planar lattice approximation of a simply connected domain.  The special cases $\kappa=6$ and $\kappa=16/3$ have been shown to correspond respectively 
to the scaling limits of site percolation on the triangular lattice \cite{S01,CN06} (as mentioned in the previous subsection) and of the critical Ising-FK model \cite{S07}.  One can also relate the $\CLE_\kappa$ with these FK-models in the framework on planar maps and Liouville quantum gravity:  
Combining the results of \cite{she2011qg_inv,dms2014mating}  implies that $\CLE_\kappa$ is the scaling limit (for the so-called peanosphere-topology) of the interfaces in the critical FK model for $q=2+\cos(8\pi/\kappa)$ on certain types of random planar map models. 

Let us now recall the definition of the $\CLE_\kappa$ exploration tree from  \cite{SHE_CLE} -- we still assume that $\kappa \in (4,8)$ here.  One starts with the $\SLE_\kappa (\kappa-6)$ branching tree as described in the previous subsection (we consider here the one in the upper half-plane, rooted at $0$ and targeting all points in the closed upper half-plane).  Suppose that $D \subseteq \C$ is a non-trivial simply connected domain and that one chooses a  starting point (or root) $x$ on $\partial D$. By mapping the upper half-plane onto $D$ (and the origin onto $x$), we obtain the $\SLE_\kappa (\kappa -6)$ branching tree in $D$, rooted at $x$.

Then, using this tree, and guided by the conjectures about discrete models, it is explained in \cite{SHE_CLE} how to define a collection of loops.  For a given point $z$, let $\eta_z$ be the branch of the exploration tree targeted at $z$ and let $\tau_z$ be the first time that $\eta_z$ surrounds $z$ clockwise.  In other words, $\tau_z$ is the first time $t$ that the harmonic measure of the right side of $\eta_z([0,t])$ as seen from $z$ is $1$.  Let $\sigma_z$ be the largest time $t$ before $\tau_z$ that the harmonic measure of the right side of $\eta_z([0,t])$ as seen from $z$ is $0$.  Then the (outermost) loop surrounding $z$ is given by the concatenation of $\eta_z|_{[\sigma_z,\tau_z]}$ with the branch of the exploration tree from $\eta_z(\tau_z)$ to $\eta_z(\sigma_z)$.  We can for instance consider the collection of all loops that surround points $z$ with rational coordinates.  This defines the countable collection of loops that form the  $\CLE_{\kappa}$, and gives rise to a number of natural conjectures \cite {SHE_CLE} about this object. 

Let us list a few properties of these $\CLE_\kappa$'s for $\kappa \in (4,8)$ that have been derived using the imaginary geometry approach to SLE processes: 
\begin {itemize}
\item  It was conjectured in \cite{SHE_CLE} that these loops are in fact continuous curves.  This is now known to hold because (see \cite{MS_IMAG})
 $\SLE_\kappa(\rho)$ curves have been proved to be continuous when $\rho > -2$ (and when $\kappa >4$, then $\kappa-6 > -2$).
\item 
It was conjectured in \cite{SHE_CLE} (this is very natural because this property holds in the case of discrete models) that the law of this collection of loops does not depend on the choice of the root of the exploration tree. This property does not follow trivially from the branching tree definition and setup, but it has now been established, using the reversibility properties of $\SLE_{\kappa} (\kappa -6)$ for $\kappa \in (4,8)$ derived in \cite{MS_IMAG3}. 
\item 
The local finiteness of $\CLE_\kappa$, i.e.\ that the number of loops with diameter at least $\epsilon$ is for each $\epsilon > 0$ almost surely finite, was established in \cite{MS_IMAG4} as a consequence of the almost sure continuity of the so-called space-filling $\SLE$. 
\item
The $\CLE_\kappa$ is clearly a deterministic function of the exploration tree.  Conversely, as explained in \cite{SHE_CLE}, when $\kappa \in (4,8)$, the exploration tree are in fact a deterministic function of the $\CLE_{\kappa}$ and the chosen root.  This makes it possible to discover simultaneously different portions of
different exploration trees starting from different roots but that are associated with the same $\CLE_\kappa$. This idea will play a key role in the present paper.
\end {itemize}

\subsubsection*{$\CLE_\kappa$ background when $\kappa \in (8/3,4)$ and continuum percolation construction}
\label{subsec:percolation_review}

Let us first say some very brief words about the basic properties of $\CLE_\kappa$ when $\kappa \in (8/3, 4)$. In the present paper, we will actually only use their 
construction via boundary conformal loop ensembles (BCLE) but we need to recall a few things about them first. 

\begin {itemize}
 \item Just as for $\kappa \in (4,8)$, one chooses a simply connected domain $D$ and a boundary point $x$, and then considers the $\SLE_\kappa^\beta (\kappa -6)$ branching tree rooted at $x$.  (Recall that one needs here to use side-swapping and/or L\'evy compensation because $\kappa - 6 < -2$. So, when $\kappa \in (8/3, 4)$,
 one has to first choose a side-swapping parameter $\beta \in [-1,1]$ to define such a tree -- and when $\kappa =4$, one would have to choose a drift parameter $\mu$.)
 \item The fact that the law of $\CLE_\kappa$ that is constructed in this way does not depend on the root is non-trivial, and does rely on another construction of these 
 CLEs using the Brownian loop-soups (see \cite{SHE_WER_CLE}). Actually, the results of the paper \cite{cle_percolations} that we will recall in a few paragraphs do 
 provide as a by-product another derivation of the root-independence of the $\CLE_\kappa$ distribution, based on the coupling of $\SLE$ with the GFF. The fact that the law of the $\CLE_\kappa$ does also not depend 
 on the choice of $\beta$ (or $\mu$ when $\kappa =4$) is explained in \cite{ww2013conformally}. So in summary, there is indeed just one $\CLE_\kappa$ distribution for each $\kappa \in (8/3, 4]$. 
 \item It is very important to stress that it has not been proved that in this construction, the exploration trees that define these $\CLE_\kappa$'s can be 
 recovered deterministically from the $\CLE_\kappa$ and the root. In fact, Theorem~\ref{thm:percolation_not_determined} of the present paper will show that in the case $\kappa \in (8/3,4)$,  the $\CLE_\kappa$ exploration tree is \emph{not} a deterministic function of the $\CLE_\kappa$ and the root (for the special case $\kappa=4$,
 we refer to the discussion in \cite{cle_percolations}: The exploration defines a tree only for the balanced labeled $\CLE_4$, which together with the choice of the root, does determine the exploration tree -- as can be shown using the direct relationship between this labeled $\CLE_4$ and the GFF). 
\end {itemize}

\subsubsection* {Boundary conformal loop ensembles} 

We now recall some further features from the paper \cite{cle_percolations} that will be relevant in the present paper:
Suppose now that $\kappa \in (2,4)$ and $\rho \in (-2,\kappa-4)$.  

Let us first recall the construction of the so-called \emph{boundary conformal loop ensembles} $\BCLE_\kappa(\rho)$ defined in \cite[Section~7]{cle_percolations}.  This is a conformally invariant family of boundary-touching $\SLE_\kappa$-type loops which live in a simply connected domain $D$. Despite the fact that $\kappa  < 4$, its definition does follow rather closely that of non-simple CLEs that we recalled in the previous subsection.  As we will here sometimes describe simultaneously some SLE processes for different values of $\kappa$, we will use in this subsection the notation $\kappa \in (2, 4)$ and $\kappa' = 16/ \kappa \in (4, 8)$, as in \cite{cle_percolations}. 

We fix a root point $x \in \partial D$.  Consider now the $\SLE_\kappa (\rho; \kappa -6 -\rho)$ branching tree rooted at $x$ and targeting all points $y \in \partial D \setminus \{ x \}$ 
(mind that this tree targets only boundary points).  
The union of all branches in this tree divides $D$ into a countable collection of connected components. The boundary of each connected component is naturally oriented by the paths of the tree which form its boundary.  The collection of boundaries of subdomains that are naturally oriented clockwise are called the loops of 
the $\BCLE_\kappa(\rho)$ and the counterclockwise ones are referred to as the false loops of the $\BCLE_\kappa(\rho)$.  If we want to emphasize that the loops have a clockwise orientation, we will use the notation $\cwBCLE_\kappa(\rho)$.  One similarly defines $\ccwBCLE_\kappa(\rho)$ using $\SLE_\kappa(\kappa-6-\rho;\rho)$ in place of $\SLE_\kappa(\rho;\kappa-6-\rho)$ and takes the loops (resp.\ false loops) which are traced counterclockwise (resp.\ clockwise).  
Again, it is not obvious from the construction, but it is shown in \cite[Proposition~7.1]{cle_percolations} (using the reversibility of the $\SLE_\kappa(\rho_1;\rho_2)$ processes with $\rho_1,\rho_2 > -2$ established in \cite{MS_IMAG2}) that these $\BCLE_\kappa (\rho)$ do not depend on the choice of root $x$.

For $\kappa' \in (4,8)$ and $\rho' \in (\kappa'/2-4,\kappa'/2-2)$, the $\cwBCLE_{\kappa'}(\rho')$ and $\ccwBCLE_{\kappa'}(\rho')$ are defined in an analogous way and it follows from the reversibility of $\SLE_{\kappa'}(\rho_1';\rho_2')$ with $\rho_1',\rho_2' \geq \kappa'/2-4$ established in \cite{MS_IMAG3} that the resulting family of loops does not depend on the choice of root \cite[Proposition~7.1]{cle_percolations}.  Recall that $\kappa'/2-2$ is the threshold below which the $\SLE_{\kappa'}(\rho')$ processes are boundary intersecting.  The range of $\rho'$ values considered in the definition of $\BCLE_{\kappa'}(\rho')$ is precisely so that $\rho' < \kappa'/2-2$ and $\kappa'-6-\rho' < \kappa'/2-2$ so that the loops do in fact hit the domain boundary.  Note that $\BCLE_{\kappa'}(0)$ is simply the collection of loops in a $\CLE_{\kappa'}$ which intersect the boundary.

We now suppose again that $\kappa \in (8/3, 4)$ (so that $\kappa' \in (4,6)$).  As explained in \cite[Section~7.2]{cle_percolations}, one can iterate $\BCLE$s, alternating between $\kappa$ and $\kappa'$ loops in order to produce natural couplings of $\CLE_\kappa$ and $\CLE_{\kappa'}$.  The construction proceeds as follows.

\begin{itemize}
\item Sample a $\cwBCLE_{\kappa'}(0)$ process $\Gamma'$.  Sampling these loops is the continuum analog of exploring the boundary-touching FK clusters with free boundary conditions.
\item Given $\Gamma'$, we then sample independent $\ccwBCLE_\kappa(-\kappa/2)$ processes in each of the (clockwise) loops of $\Gamma'$.  Call the resulting collection of loops $\Gamma$.  Sampling these loops is the continuum analog of exploring the boundary touching interfaces in the corresponding Potts model with free boundary conditions.
\item Iterate the exploration in the false loops of $\Gamma'$ and $\Gamma$.
\end{itemize}

It is shown in \cite[Theorems~7.2 and~7.3]{cle_percolations} that the law of the collection of $\SLE_\kappa$-type loops thus defined is in fact a $\CLE_\kappa$ (this therefore provides a construction of $\CLE_\kappa$ that does not 
rely on any $\SLE_\kappa (\rho)$ process for $\rho < -2$). 
Note that the proof in \cite {cle_percolations}  uses the $\SLE$ commutation relations which are encoded by the GFF \cite{MS_IMAG,MS_IMAG4}. 

The branch of the $\CLE_{\kappa'}$ exploration tree in this construction is called 
a \emph{continuum percolation exploration} (CPI) inside of the $\CLE_\kappa$ carpet (see \cite[Definition~2.1]{cle_percolations} as well as \cite[Section~4]{cle_percolations}) because it can be interpreted as a percolation interface within this $\CLE_\kappa$ carpet. 
This $\CLE_{\kappa'}$/$\CLE_\kappa$ coupling derived in \cite {cle_percolations} is  a continuum version of the random cluster representation of the Potts model (see, e.g., \cite{GRIM_CLUSTER_BOOK} for a review). See also \cite{BEN_HONG} for the case of the Ising model.

Note that in the  $\BCLE_{\kappa'}$/$\BCLE_\kappa$ iteration scheme described just above, the $\BCLE_\kappa$ loops are always attached to the right side of the $\BCLE_{\kappa'}$ loops.  This means that the CPI always reflects to the left whenever it hits a $\CLE_\kappa$ loop (in the percolation interpretation, 
the interiors of the $\CLE_\kappa$ loops are closed, and the CPI traces the open/closed interface, with closed to the right and open to the left of the interface). 

This construction can be generalized to the setting in which each of the $\CLE_\kappa$ holes is labeled either $+$ or $-$ independently with a given probability $p \in (0,1)$ (which is reminiscent of the $\SLE_\kappa (\kappa -6)$ side-swapping mechanism). 
Then, the CPI reflects to the left (resp.\ right) when it hits a loop with a~$+$ (resp.\ $-$) label.  
In this case the CPI is a branch in a $\BCLE_{\kappa'}(\rho')$ exploration tree where~$\rho'$ is a function of~$p$. This provides for each choice of $p$, a different 
$\BCLE_{\kappa'}$/$\BCLE_\kappa$-type iteration scheme that construct a $\CLE_\kappa$ (see \cite {cle_percolations} for all this).

\subsection{The trunk construction of $\SLE_\kappa(\kappa-6)$ for $\kappa \in (8/3,4)$}
\label{subsec:trunk_construction}

In Section~\ref{subsec:percolation_review}, we recalled the construction of $\CLE_\kappa$ which is based on iterated $\B$\CLE's from \cite{cle_percolations}.  We will now describe the analogous procedure where one builds just one $\SLE_\kappa(\kappa-6)$ process for $\kappa \in (8/3,4]$, which is a single branch of the $\SLE_\kappa (\kappa -6)$ exploration tree. 

We recall (see \cite{cle_percolations} and the references therein for additional background) that when $\kappa \in (8/3, 4)$ so that $\kappa-6 < -2$, for each choice of $\beta \in [-1, 1]$, one can define an $\SLE_\kappa^\beta(\kappa -6)$ process so that the following is true.  The value $p =(1+\beta) / 2$ represents the probability that when it traces a loop, it traces it counterclockwise and the trunk passes to the left of that loop while when this process traces a loop clockwise, its trunk passes to the right of that loop. When $\kappa =4$ so that $\rho=-2$, one has to use a symmetric side-swapping (i.e., $\beta=0$ and $p=1/2$ in the previous setup), but there is an additional drift-type parameter $\mu$.  This leads to the so-called $\SLE_4^{0, \mu}$ processes.  All of these processes are called conformally invariant explorations of the $\CLE_\kappa$ that they construct.

We will now describe in a bit more detail the case where $\kappa \not=4$ (i.e., $\kappa \in (8/3,4)$) and for simplicity we will again focus on the case $\beta = 1$ (so that all of the loops are attached to the right side of the trunk).  We suppose that we have a simply connected domain $D \subseteq \C$ and fix $x,y \in \partial D$ distinct.  We then carry out the following steps (see the right-hand side of Figure \ref{fig:sle_bcle}). Here $\rho'=0$ because we are dealing with the case $\beta=1$. 
\begin{itemize}
\item Sample an $\SLE_{\kappa'}(\kappa'-6)$ process $\eta'$ from $x$ to $y$ with the force point located at $x^+$ (i.e., infinitesimally on the counterclockwise side of $x$).  Note that the time-reversal of $\eta'$ is an $\SLE_{\kappa'}(\kappa'-6)$ from $y$ to $x$ with the forced point located at $y^-$.  In other words, the law of $\eta'$ is reversible up to swapping the force point from the right to the left side.  This  process $\eta'$ will end up being the \emph{trunk} of an $\SLE_\kappa^1(\kappa -6)$ from $x$ to $y$ that we will construct. 
\item We next trace the collection of $\CLE_\kappa$-loops that are attached to the trunk $\eta'$ using the following procedure.  In each component $U$ in the complement of the range of $\eta'$ which is either to the right of $\eta'$ or surrounded clockwise by $\eta'$ we sample a collection of $\SLE_\kappa$-type loops as follows.  Let $x_U$ (resp.\ $y_U$) be the first (resp.\ last) point on $\partial U$ which is visited by $\eta'$.  We then draw a branching $\SLE_\kappa(3\kappa/2-6)$ process which starts from $x_U$ and is targeted at every point on $\partial U$ which is in the range of $\eta'$.  When viewed as targeting $y_U$, this process is an $\SLE_\kappa(3\kappa/2-6)$ process with a single force point at $x_U^-$ (when viewed as targeting another point $z$ in $\partial U$ which is also in the range of $\eta'$, this process is an $\SLE_\kappa(3\kappa/2-6;-\kappa/2)$ process with force points at $x_U^-$ and $y_U$); note that $3\kappa/2-6 = \kappa-6-(-\kappa/2)$.  This process will trace a collection of $\SLE_
\kappa$-type loops.  We note that in the case that $x_U = y_U$, this collection of loops has the same law as for a $\BCLE_\kappa(-\kappa/2)$.  In the case that $x_U \neq y_U$, we will still refer to this collection of loops as a $\BCLE_\kappa(-\kappa/2)$ but with the marked boundary points $x_U$  and $y_U$.
 \item We can the construct a continuous path $\eta$ from $x$ to $y$ as follows.  It moves along the trunk $\eta'$ from $x$ to $y$, and each time it meets one of the $\SLE_\kappa$-type loops for the first time, it traces it counterclockwise.  As explained in \cite{cle_percolations}, the law of $\eta$ is that of an $\SLE_\kappa^1(\kappa-6)$ in $D$ from $x$ to $y$. 
\end{itemize}

\begin{figure}[ht!]
\begin{center}
\includegraphics[scale=0.85]{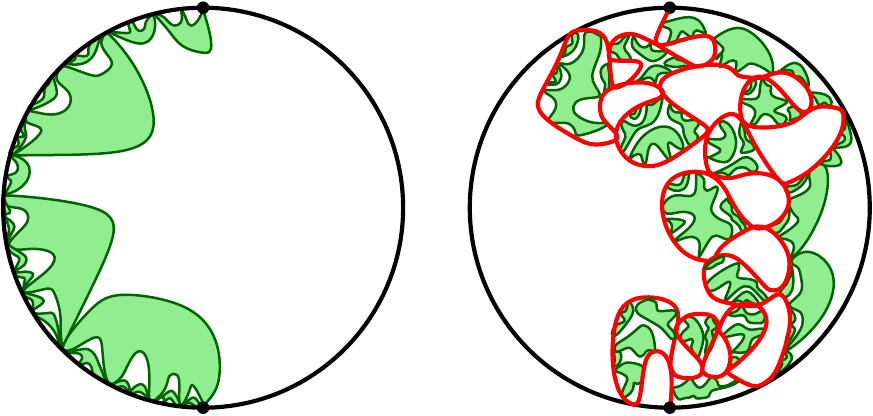}	
\end{center}
\caption{\label{fig:sle_bcle} {\bf Left:} A collection of loops in $\D$ created by a branching $\SLE_\kappa(3\kappa/2-6)$ process from $-i$ to $i$.  The right boundary of the loops corresponds to this process targeted at $i$.  The loops altogether form a $\BCLE_\kappa(-\kappa/2)$ with marked points $-i$ and $i$. {\bf Right:} An $\SLE_{\kappa'}(\kappa'-6)$ process $\eta'$  from in $\D$ from $-i$ to $i$.  In each clockwise loop or component to the right of $\eta'$ is an independent $\BCLE_\kappa(-\kappa/2)$ (their interior is filled).  Following the $\BCLE_\kappa(-\kappa/2)$ loops in the order they are first visited by $\eta'$ yields an $\SLE_\kappa^1(\kappa-6)$.}
\end{figure}

Note that the components of $D \setminus \eta'$ which are to the right of $\eta'$ have boundary which can be decomposed into two parts: the part which is in $\partial D$ and the part which is in the range of $\eta'$.  So, in the second step, the loops of $\eta$ which are in these components are drawn by independent branching $\SLE_\kappa(3\kappa/2-6)$ processes in each component, starting from the first point on the boundary which is visited by $\eta'$.  Moreover, the right boundary of these loops is given in each component by an $\SLE_\kappa(3\kappa/2-6)$ starting from the first point on the component boundary visited by $\eta'$ and targeted at the last.  It will be convenient in some places below to think of the concatenation of these $\SLE_\kappa(3\kappa/2-6)$ processes as a single curve (which we will do)  and refer to as an $\SLE_\kappa(3\kappa/2-6)$.

We can use the same trunk and the same collection of $\SLE_\kappa$-type loops to build a continuous path $\wt{\eta}$ from $y$ to $x$.  By reversibility, $\wt{\eta}$ has the law of an $\SLE_\kappa^{-1}(\kappa -6)$ from $y$ to $x$ that is tracing exactly the same loops at $\eta$ (but clockwise instead of counterclockwise).  Note that while the ranges of $\eta$ and of $\wt \eta$ coincide and the trunk of $\wt {\eta}$ is the time-reversal of $\eta'$, the two processes $\eta$ and $\wt {\eta}$ are not the time-reversal of each other.  Indeed, the trunk will meet each loop more than once, so that the order in which $\wt \eta$ encounters the loops is not exactly the reversed order in which $\eta$ meets them.  In a certain sense, this construction in fact provides a very precise description of the lack of reversibility of the $\SLE_\kappa(\kappa -6)$ processes.

\section{Conformal invariance of $\CLE_\kappa$ exploration tree hookup probabilities}
\label{sec:hookup}

The goal of the present section will be to derive a conformal invariance statement related to pairs of explorations of $\CLE_\kappa$'s.  In Section~\ref{subsec:hookup4_to_8}, we  will 
address the case that $\kappa \in (4,8)$, which is the one that will be an essential ingredient in the proofs of our main three theorems.  
For completeness and future reference, we also discuss the case that $\kappa \in (8/3,4)$ in Section~\ref{subsec:hookup_83_to_4}
(note that the story in the case that $\kappa=4$ is anyway much simpler because of the connection between the GFF and $\SLE_4(\rho)$ processes).

\subsection{The case $\kappa \in (4,8)$}
\label{subsec:hookup4_to_8}

\begin{figure}[ht!]
\includegraphics[scale=0.85,page=8]{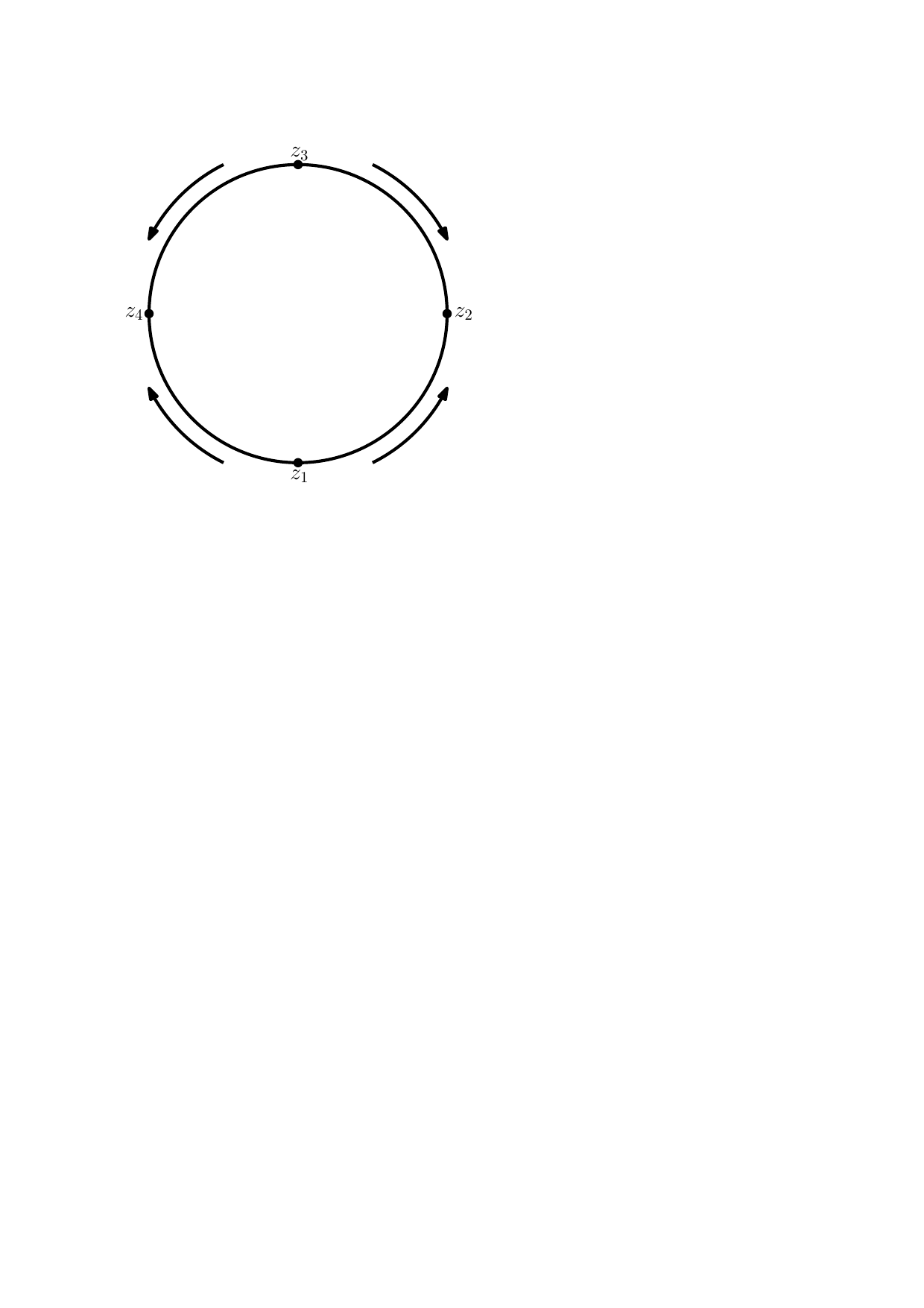}	
\includegraphics[scale=0.85,page=9]{figures/routing}	
\caption{\label{fig:setup} Exploration of a $\CLE_\kappa$ for $\kappa \in (4,8)$ starting from $-i$ and from $i$, creating four branches.}
\end{figure}

Let us consider a $\CLE_\kappa$ for $\kappa \in (4, 8)$ in $\D$. Some of the $\CLE$ loops will hit $\partial \D$, and some others will not. For each loop $\CL$ that intersects the counterclockwise half-circle from $-i$ to $i$, we can define its first and last intersection points $z(\CL)$ and $\wt z (\CL)$ on this half-circle, when one moves from $-i$ to $i$. One can then define a continuous path $\eta_1^\#$ from $-i$ to $i$ as follows: One moves counterclockwise along the half-circle, and each time one hits $z(\CL)$ for some loop $\CL$, one attaches the clockwise loop $\CL$ to the path, and then proceeds.  We note that this defines a continuous path by the local finiteness of $\CLE_\kappa$ proved in \cite{MS_IMAG4}. One can then define the subpath $\eta_1$ of this path that corresponds to its growth as seen from $i$ (loosely speaking, one cuts out all parts that are growing while hidden away from~$i$). This path $\eta_1$ is the concatenation of all clockwise portions of loops $\CL$ from $z(\CL)$ to $\wt z(\CL)$ that are not disconnected from $i$ and $-i$ by any other such portion. The law of the path $\eta_1$ is that of an $\SLE_\kappa (\kappa -6)$ from $-i$ to $i$ in $\D$.  In fact, $\eta_1$ is exactly the branch from $-i$ to $i$ of the $\CLE_\kappa$ exploration tree.

One can note that after some time $t$ ($t$ can be a deterministic time or a stopping time with respect to the filtration generated by $\eta_1$), one can define $z(t)$ to be the last point on the half-circle from $-i$ to $i$ that $\eta_1$ visited before $t$. If $\eta_1(t) \not= z(t)$, then this point $z(t)$ is (by construction) equal to $z(\CL)$ where $\CL$ is the loop that $\eta_1$ is (partially) tracing at time $t$.

One can also interchange the roles of $i$ and $-i$ and perform the backward procedure: one moves clockwise along the half-circle from $i$ to $-i$, and attaches the loops of the $\CLE_\kappa$ drawn in counterclockwise manner. In this way, one defines a path $\eta_2^\#$ and a subpath $\eta_2$ (which is $\eta_2^\#$ seen as growing from $-i$). While the time-reversal of $\eta_1^\#$ is not identical to $\eta_2^\#$ because the order of the loops and the way in which they are discovered have been changed, the time-reversal of $\eta_1$ is exactly $\eta_2$ (it is described also via the concatenation of the same portions of loops $\CL$ between  $\wt z (\CL)$ and $z (\CL)$):  The time-reversal of the  $\SLE_\kappa (\kappa -6)$ $\eta_1$ from $-i$ to $i$ is the $\SLE_\kappa (\kappa -6)$ $\eta_2$ from $i$ to $-i$  (modulo the convention that the marked point is now on the other side of the path).

Let us now suppose that one has discovered $\eta_1$ up to a stopping time $t$ and that $\eta_1 (t) \not= z(t)$. As we have already mentioned, the point $z(t)$ is then the starting point of the $\CLE$ loop $\CL$ that $\eta_1 (t)$ is part of. In particular, the conditional law of the rest of this loop given $\eta_1|_{[0,t]}$ is exactly an $\SLE_\kappa$ from $\eta_1(t)$ to $z(t)$ in the component of $\D \setminus \eta_1 ([0,t])$ with $z(t)$ on its boundary. We can now decide to discover part of this loop counterclockwise starting from $z(t)$. By {the} time-reversal {symmetry} of $\SLE_\kappa$ \cite{MS_IMAG3}, the law of this path is an $\SLE_\kappa$ from $z(t)$ to $\eta_1(t)$ in the component of $\D \setminus \eta_1 ([0,t])$ with $\eta_1(t)$ on its boundary. Let us call this path $\wt \eta_1$, and discover $\wt \eta_1$ up to some stopping time $s$ (note that the definition of $\wt \eta_1$ and the notion of stopping time depend on $\eta_1|_{[0,t]}$). Given $\eta_1$ up to time $t$ and $\wt \eta_1 $ up to time $s$,
 the law of the missing part of $\CL$ joining~$\eta_1 (t)$ to $\wt \eta_1 (s)$ is just an $\SLE_\kappa$ in the remaining to be discovered domain (i.e.\ in  the connected component $D_{t,s}$ of $\D \setminus (\eta_1 ([0,t]) \cup \wt \eta_1 ([0,s]))$ which has $\wt \eta_1 (s)$ and $\eta_1 (t)$ on its boundary)
because of the reversibility of $\SLE$.

We now define symmetrically the path~$\eta_2$ up to some stopping time~$t_2$, the point $\wt z (t_2)$ and the path~$\wt \eta_2$ from $\wt z (t_2)$ to $\eta_2 (t_2)$. We assume that we are in a configuration as depicted in Figure~\ref{fig:setup}, where all four points $\eta_1 (t)$, $\wt \eta_1 (s)$, $\eta_2 (t_2)$ and $\wt \eta_2 (s_2)$ are four different boundary points of the same connected component $D(t,s,t_2,s_2)$ of the remaining to be discovered domain. Typical examples of stopping times $t$, $s$, $t_2$ and $s_2$ can be the respective hitting times of a circle of radius $r$ around the origin by the respective four strands (if they do make it to that circle).

We can note that conditionally on these four branches, two possibilities arise: 
\begin{itemize}
 \item $\eta_1(t)$ and $\eta_2 (t_2)$ correspond to parts of the same $\CLE$ loop. In this case,
 $\wt \eta_1$ will hook up with $\wt \eta_2$, while the path $\eta_1$ will first hook up with $\eta_2$ (and these last two paths respectively coincide with $\eta_1^\#$ and $\eta_2^\#$ up to when they hook up). We call this the one-loop event $E_1$.
 \item $\eta_1 (t)$ and $\eta_2 (t_2)$ correspond to different $\CLE$ loops. In this case, $\wt \eta_1$ will hook up with $\eta_1^\#$ without meeting $\wt \eta_2 (s_2)$, and $\wt \eta_2$ will hook up with $\eta_2^\#$ without meeting $\wt \eta_1 (s)$. We call this the two-loop event $E_2$.
\end{itemize}

With the previous notation, we define $C(t,s,t_2, s_2)$ to be the configuration given by the domain $D(t, s, t_2, s_2)$ and the four counterclockwise ordered boundary points 
$\eta_1 (t), \wt \eta_1 (s),  \wt \eta_2 (s_2), \eta_2 (t_2)$.
We are going to establish the following conformal invariance statement. 

\begin{lemma}
\label{lem:paths_confinv}
If we are in a configuration as depicted in Figure~\ref{fig:setup}, the conditional distribution of the remaining pieces of $\eta_1$, $\wt \eta_1$, $\eta_2$ and $\wt \eta_2$ in $D(t,s,t_2, s_2)$ until they hook up into one or two loops is a conformally invariant function of the configuration $C(t,s,t_2, s_2)$.
\end{lemma}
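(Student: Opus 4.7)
\emph{Proof plan.} The plan is to derive Lemma~\ref{lem:paths_confinv} from the domain Markov property of $\SLE_\kappa(\kappa-6)$ and of $\SLE_\kappa$ combined with the conformal invariance in law of these processes. The key observation is that the four continuations, viewed jointly inside $D(t,s,t_2,s_2)$, can be described by an intrinsic SLE-type procedure depending only on the quadruple $C(t,s,t_2,s_2)$.

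First I would record the conditional law of each continuation given all four partial explorations. By the domain Markov property of $\SLE_\kappa(\kappa-6)$ applied to $\eta_1$ at time $t$, the continuation of $\eta_1$ in the component of $\D \setminus \eta_1([0,t])$ having $i$ on its boundary is an $\SLE_\kappa(\kappa-6)$ from $\eta_1(t)$ to $i$, with force point immediately counterclockwise of $\eta_1(t)$. An analogous statement, obtained through target-invariance and the reversibility of $\SLE_\kappa(\kappa-6)$ recalled in Section~\ref{subsec:cle_bacground}, holds for $\eta_2$. The paths $\wt\eta_1$ and $\wt\eta_2$, which by construction each trace part of a specific $\CLE_\kappa$ loop, are $\SLE_\kappa$ processes in their respective subdomains given everything revealed so far; the domain Markov property of $\SLE_\kappa$ at times $s$ and $s_2$ identifies their continuations as $\SLE_\kappa$ processes as well.

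Next I would splice these four conditional laws together inside $D(t,s,t_2,s_2)$. Each continuation is, conditionally on the three others and on everything already discovered, an SLE-type process in the appropriate slit subdomain, emanating from its marked boundary point with force points placed at locations that depend only on the configuration $C(t,s,t_2,s_2)$. The hook-up event $E_1$ versus $E_2$ and the final joint law are then characterized as the unique random configuration of four interacting SLE strands consistent with this system of conditional laws. Since every ingredient refers only to $D(t,s,t_2,s_2)$ together with the cyclic order of the four marked points and the sides from which the strands emerge, the joint distribution is an intrinsic function of $C(t,s,t_2,s_2)$.

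Finally I would apply a conformal map $\varphi : D(t,s,t_2,s_2) \to D'$ sending the four marked boundary points to any prescribed counterclockwise-ordered quadruple on $\partial D'$. Since $\SLE_\kappa$ and $\SLE_\kappa(\kappa-6)$ are conformally invariant in law, the push-forward under $\varphi$ of each conditional distribution coincides with the corresponding object defined directly from $\varphi(C)$, and combining the four conditional invariances yields the stated conformal invariance. I expect the main obstacle to be the splicing step: one must verify that the iterated Markov conditioning produces a consistent Gibbs-type description depending only on $C(t,s,t_2,s_2)$, and in particular that no parametrization-dependent data from the stopping times leaks into the description. I would handle this using the branching tree viewpoint from Section~\ref{subsec:cle_bacground}, in which any stopping time for one branch is also a stopping time for the intrinsic exploration launched from the corresponding boundary point of $D(t,s,t_2,s_2)$.
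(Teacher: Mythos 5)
The gap is in your splicing step, and it is not a technicality: it is the main content of the paper's argument. The domain Markov property gives the conditional law of the continuation of $\eta_1$ given $\eta_1|_{[0,t]}$ alone, and reversibility gives the law of $\eta_2$ on its own; but in the configuration of Lemma~\ref{lem:paths_confinv} one must condition \emph{jointly} on an initial segment of $\eta_1$ and on an initial segment of its time-reversal $\eta_2$ (plus the partially drawn loop strands $\wt \eta_1$, $\wt \eta_2$). The conditional law of an $\SLE_\kappa(\kappa-6)$ given a piece of its time-reversal is not provided by any Markov property of the forward process, and there is no a priori reason why it should depend only on the conformal class of $D(t,s,t_2,s_2)$ with its four marked points --- that is essentially the statement to be proved. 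Your assertion that ``each continuation is, conditionally on the three others and on everything already discovered, an SLE-type process with force points depending only on the configuration'' therefore begs the question. Moreover, even granting such a conditional specification, the uniqueness of a joint law consistent with it (your Gibbs-type characterization) is itself a nontrivial resampling theorem rather than an off-the-shelf fact; and resampling rules of this kind, applied conditionally on the hook-up pattern, would only determine the law given $E_1$ and given $E_2$ (as bi-chordal $\SLE_\kappa$), not the probability of $E_1$ itself, which is the genuinely new quantity here (the nontrivial cross-ratio function $f_\kappa$).

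The paper proceeds differently: it first reduces Lemma~\ref{lem:paths_confinv} to the hook-up probability statement (Lemma~\ref{lem:paths_hookup}) using the bi-chordal characterization of \cite{MS_IMAG2} conditionally on $E_1$ or $E_2$, then reduces by a symmetry argument to the case $s_2=t_2=0$, and finally proves Lemma~\ref{lem:sle_k_k_minus_6_reversal}, namely that the conditional law of an $\SLE_\kappa(\kappa-6)$ given an initial piece of its time-reversal is a conformally invariant function of the resulting four-marked-point configuration. That last step is where the real work lies, and it requires auxiliary structure: for $\kappa \in (6,8)$ one couples the curve with a GFF and uses a flow line together with reversibility of $\SLE_\kappa$, while for $\kappa \in (4,6)$ one views the curve as a CPI in a $\CLE_{16/\kappa}$ and conditions on the loops touching it; in both cases one invokes the extended bi-chordal resampling uniqueness results of Appendix~\ref{sec:bichordal}, which are proved there precisely because the classical statement of \cite{MS_IMAG2} does not cover intersecting paths. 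None of these ingredients (an auxiliary coupling plus a proved resampling uniqueness statement) appear in your plan, so the proposal does not close the gap that you yourself flag at the splicing step.
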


In fact, in order to prove this lemma it suffices to prove the following seemingly weaker result:  
\begin{lemma}
 \label{lem:paths_hookup}
If we are in a configuration as depicted in Figure~\ref{fig:setup}, the conditional distribution of $E_1$ (and therefore of $E_2$) is a function $f_\kappa (\cdot)$ of the cross-ratio between the four boundary points in the configuration $C(t,s,t_2, s_2)$.
\end{lemma}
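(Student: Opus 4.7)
The plan is to show directly that $\p[E_1 \giv C(t,s,t_2,s_2)]$ is a conformally invariant function of the configuration; the lemma will then follow since a simply connected domain with four counterclockwise-ordered marked boundary points is determined up to conformal equivalence by a single cross-ratio.

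To implement this, I would first identify the joint conditional law of the four remaining strands, given everything observed so far. By the Markov property of the branching $\SLE_\kappa(\kappa-6)$ exploration tree, the continuation of $\eta_1$ past time $t$ is an $\SLE_\kappa(\kappa-6)$ started at $\eta_1(t)$ and targeted at $i$ inside the connected component of $\D \setminus \eta_1([0,t])$ containing $i$, and symmetrically for $\eta_2$. As recalled in Section~\ref{subsec:hookup4_to_8}, the path $\tilde\eta_1$ is, conditionally on $\eta_1|_{[0,t]}$, an $\SLE_\kappa$ in the component of $\D \setminus \eta_1([0,t])$ containing $z(t)$, so by the ordinary $\SLE_\kappa$ Markov property its continuation past time $s$ is itself an $\SLE_\kappa$ in the relevant residual domain; the same applies to $\tilde\eta_2$. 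Putting these four Markovian descriptions together, the joint conditional law of the four pieces remaining inside $D(t,s,t_2,s_2)$ is specified by $\SLE_\kappa$ and $\SLE_\kappa(\kappa-6)$ processes whose boundary data is entirely localised at the four marked points $\eta_1(t)$, $\tilde\eta_1(s)$, $\tilde\eta_2(s_2)$, $\eta_2(t_2)$.

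Second, by the conformal invariance of $\SLE_\kappa$ and $\SLE_\kappa(\kappa-6)$, the pushforward of this joint law under a conformal map $\varphi : D(t,s,t_2,s_2) \to D'$ sending the four marked points to prescribed boundary points of $D'$ has exactly the same intrinsic description in $D'$. Since the event $E_1$ is topological (it depends only on the combinatorial hookup pattern of the four strands), its probability is preserved by $\varphi$. Hence $\p[E_1 \giv C(t,s,t_2,s_2)]$ takes the same value on any two configurations that are conformally equivalent, and is therefore a function $f_\kappa(\cdot)$ of the cross-ratio, as claimed.

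The main obstacle is verifying that the joint description of the four continuations, in particular the coupling between the $\eta$-strands and the $\tilde\eta$-strands through which $E_1$ is decided, is genuinely intrinsic to $(D(t,s,t_2,s_2),\eta_1(t),\tilde\eta_1(s),\tilde\eta_2(s_2),\eta_2(t_2))$ and not tied to the original domain $\D$. This will rely on the target-independence and commutation relations for $\SLE_\kappa(\kappa-6)$ from \cite{dub2007commutation,SCHRAMM_WILSON} together with the reversibility of $\SLE_\kappa$ from \cite{MS_IMAG3}, which jointly provide a conformally covariant construction of the full joint distribution of the continuations and hence of $\p[E_1 \giv C(t,s,t_2,s_2)]$.
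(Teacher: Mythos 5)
There is a genuine gap at the heart of your argument, at the step where you ``put these four Markovian descriptions together.'' You treat $\eta_1$ and $\eta_2$ as if they were two separate branches, each continued by its own domain Markov property, but $\eta_2$ is the \emph{time-reversal} of $\eta_1$: conditioning on $\eta_2|_{[0,t_2]}$ means conditioning on a terminal portion of the very same $\SLE_\kappa(\kappa-6)$ path whose initial portion is $\eta_1|_{[0,t]}$. The Markov property of the exploration tree says nothing about the conditional law of a path given a piece of its future, and $\SLE_\kappa(\kappa-6)$ is not reversible in the naive sense (its reversal is an $\SLE_\kappa(\kappa-6)$ only after the force point changes side); even granting that, one must still show that the conditional law of the unexplored middle of the branch given \emph{both} end portions (together with the partially traced loops $\wt\eta_1|_{[0,s]}$, $\wt\eta_2|_{[0,s_2]}$) depends only on the configuration $C(t,s,t_2,s_2)$, i.e.\ on the domain and the four marked prime ends, and not on finer data of the conditioned pieces, off which the remaining strands may bounce. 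Your claim that ``the boundary data is entirely localised at the four marked points'' is precisely this assertion, and it is equivalent to the lemma you are trying to prove; the appeal in your last paragraph to target-invariance, commutation relations and reversibility of plain $\SLE_\kappa$ does not supply it.

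This is exactly where the paper's work lies. It first uses the symmetry of the configuration to reduce to the case where the $\wt\eta$'s have not been grown, so that the statement becomes Lemma~\ref{lem:sle_k_k_minus_6_reversal}: the conditional law of an $\SLE_\kappa(\kappa-6)$ given an initial segment of its time-reversal is a conformally invariant function of the resulting four-marked-point configuration. The proof of that lemma is then split into $\kappa\in(6,8)$, handled via the GFF/imaginary geometry coupling together with reversibility of $\SLE_\kappa$ from \cite{MS_IMAG3}, and $\kappa\in(4,6)$, handled via the CPI/$\CLE_{16/\kappa}$ coupling of \cite{cle_percolations} (with $\kappa=6$ trivial by locality); in both cases the one-sided conditional descriptions are welded into a unique joint law by a bi-chordal resampling characterization, and since the strands here may intersect, \cite[Theorem~4.1]{MS_IMAG2} must itself be extended, which is the purpose of Appendix~\ref{sec:bichordal} (Sections~\ref{subsec:bc_two_paths} and~\ref{subsec:bc_three_paths}). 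Note also that one-path-at-a-time conditional laws (e.g.\ the remaining piece of a loop being an $\SLE_\kappa$ from $\eta_1(t)$ to $\wt\eta_1(s)$) do not by themselves determine the probability of the hookup event $E_1$, which concerns how the strands are matched; without a uniqueness argument of bi-chordal type your outline cannot close this loop.
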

Indeed, conditionally on $E_1$, we can describe the joint conditional law of the remaining pieces of the loops containing $\eta_1$ and $\wt \eta_1$ (and therefore of $\eta_2$ and $\wt \eta_2$) in $D(t,s,t_2, s_2)$ as the bi-chordal $\SLE_\kappa$ joining the four end-points in that domain, which is characterized uniquely by the fact that conditionally on one of the two paths, the law of the other one 
is an ordinary $\SLE_\kappa$ in the remaining domain (see \cite[Theorem 4.1]{MS_IMAG2}), and we know that this property is satisfied in the present case (the same argument can also be applied when one conditions on $E_2$). 

In fact, we can notice that this conditional distribution is in fact symmetric when one formally interchanges the roles of $(\eta_1 (t), \wt \eta_1 (s), \wt \eta_2 (s_2), \eta_2 (t_2))$ 
and $( \wt \eta_2 (s_2), \eta_2 (t_2) , \eta_1 (t), \wt \eta_1 (s))$. It therefore follows that it is in fact sufficient to prove Lemma~\ref{lem:paths_hookup} in the case where we do not grow $\wt \eta_1$ or $\wt \eta_2$ after $\eta_1$ and $\eta_2$ have been defined (in other words, when $s_2=t_2=0$). Indeed, we can then decide to switch the roles of $\eta_1, \eta_2$ and $\wt \eta_1, \wt \eta_2$, and to continue growing $\eta_1$ and $\eta_2$ instead of growing $\wt \eta_1$ and $\wt \eta_2$, and by conformal invariance, the general case of Lemma~\ref{lem:paths_hookup} follows.

As we have already pointed out,  $\eta_1$ (and its time-reversal $\eta_2$) is an $\SLE_\kappa (\kappa-6)$ process, so that the following result implies Lemma~\ref{lem:paths_hookup} (here, modulo conformal invariance, $\eta$ plays the role of the remaining part of $\eta_1$, including $\eta_2$, while $\eta_2$ plays the role of the beginning of the time-reversal of $\eta$).

\begin{lemma}
\label{lem:sle_k_k_minus_6_reversal}
Fix $\kappa \in (4,8)$ and suppose that~$\eta$ is an $\SLE_\kappa(\kappa-6)$ process in~$\h$ from~$0$ to~$\infty$ with force point located in~$\R_+$.  Let~$\eta_R$ be the time-reversal of~$\eta$, let~$\tau_R$ be an~$\eta_R$-stopping time, and let $D_{\tau_R}$ be the component of~$\h \setminus \eta_R([0,\tau_R])$ with~$0$ on its boundary.  Then the conditional law of~$\eta$ given $\eta_R|_{[0,\tau_R]}$ viewed as a path in $D_{\tau_R}$ is a conformally invariant function of the configuration consisting of the domain~$D_{\tau_R}$ and the four marked boundary points given by~$0$, the location of the force point, $\eta_R(\tau_R)$, and  $\min ( \eta_R|_{[0,\tau_R]} \cap \R_+)$.
\end{lemma}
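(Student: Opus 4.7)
The plan is to combine the reversibility of $\SLE_\kappa(\rho_1;\rho_2)$ processes established in \cite{MS_IMAG3} with the domain Markov property and the built-in conformal invariance of $\SLE_\kappa(\rho)$-type processes. Since $\kappa \in (4,8)$, the single-force-point process $\eta$ can be viewed as $\SLE_\kappa(0;\kappa-6)$ from $0$ to $\infty$ with a trivial force point at $0^-$ and force $\kappa-6$ at $x \in \R_+$, and both weights satisfy $\rho_i \geq \kappa/2-4$. Hence $\eta_R$ is itself an $\SLE_\kappa(\rho_1';\rho_2')$-type process from $\infty$ to $0$ in $\h$, whose driving force points are inherited in a canonical way from the forward marked data (in particular, from the target $0$ and the force-point location $x$).

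First, I would invoke the domain Markov property of $\SLE_\kappa(\rho)$: conditionally on $\eta_R|_{[0,\tau_R]}$, the continuation of $\eta_R$ inside $D_{\tau_R}$ is again an $\SLE_\kappa(\rho_1';\rho_2')$-type process in $D_{\tau_R}$, whose driving configuration is nothing more than the state of the Loewner dynamics of $\eta_R$ at time $\tau_R$. The key step is then to identify this state with exactly the four marked boundary points listed in the lemma: the tip $\eta_R(\tau_R)$ plays the role of the starting point of the continuation, $0$ remains the target, the original force-point location $x$ remains as a marked boundary point as long as $x \in \partial D_{\tau_R}$, and the leftmost hit $\min(\eta_R|_{[0,\tau_R]} \cap \R_+)$ records the accumulated effect on the reverse-process force point coming from the interaction of $\eta_R$ with the positive real axis, as tracked by the Loewner flow.

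Once this identification is in place, the conclusion is immediate: $\SLE_\kappa(\rho_1';\rho_2')$ is by construction conformally invariant as soon as the domain and the positions of the starting point, target, and force points are specified, so the conditional law of the continuation of $\eta_R$ in $D_{\tau_R}$ (equivalently, of the remaining part of $\eta$) depends on $\eta_R|_{[0,\tau_R]}$ only through the conformal equivalence class of the claimed configuration. The hard part will be precisely the matching performed in the previous paragraph: one has to verify that no information beyond the domain and the four marked points leaks into the conditional law, and in particular handle the degenerate cases where $\eta_R$ has not yet hit $\R_+$ or where $x$ has already been swallowed by $\eta_R$, in which the configuration reduces to three marked points and the same conformal-invariance argument applies.
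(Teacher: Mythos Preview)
Your proposal has a real gap at its first step.  The reversibility statement in \cite{MS_IMAG3} is proved for $\SLE_{\kappa}(\rho_1;\rho_2)$ processes whose force points are located immediately next to the seed (at $0^-$ and $0^+$).  It does \emph{not} hand you a description of the time-reversal of $\SLE_\kappa(\kappa-6)$ when the force point sits at a generic $x>0$, which is exactly the setting of the lemma.  So the sentence ``Hence $\eta_R$ is itself an $\SLE_\kappa(\rho_1';\rho_2')$-type process from $\infty$ to $0$ in $\h$, whose driving force points are inherited in a canonical way\ldots'' is not justified by the reference you cite, and without it there is no process to which you can apply the domain Markov property.  Even if one grants (say via a GFF coupling) that $\eta_R$ admits \emph{some} description with marked boundary data, you would still have to prove that after stopping at $\tau_R$ the relevant data reduce to precisely the four points $0$, $x$, $\eta_R(\tau_R)$ and $\min(\eta_R\cap\R_+)$.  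You flag this yourself as ``the hard part'' and then do not carry it out; but that matching is essentially the content of the lemma, so the argument as written is circular.

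The paper's proof takes a different route and never tries to name the law of $\eta_R$ directly.  It splits into cases.  For $\kappa\in(6,8)$ it couples $\eta'$ with a GFF, introduces an auxiliary flow line, and observes that the conditional law of $\eta'$ given that flow line is an ordinary $\SLE_{\kappa'}$ (whose reversibility \emph{is} available from \cite{MS_IMAG3}) while the conditional law of the flow line given $\eta'$ is a conformally invariant $\SLE_\kappa(\rho)$ in each pocket.  For $\kappa\in(4,6)$ it couples $\eta'$ with a $\CLE_\kappa$ as a CPI and records analogous two-way conditional laws involving the boundary-touching loops.  In both regimes the conclusion then comes from the bi-chordal uniqueness machinery of Appendix~\ref{sec:bichordal}: since the two resampling kernels are conformally invariant and jointly characterize the law, the law of $\eta'$ given $\eta'|_{[0,\tau]}$ and $\eta_R'|_{[0,\tau_R]}$ is conformally invariant in the stated four-point sense.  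This detour through an auxiliary object is precisely what replaces the unavailable direct identification of $\eta_R$ that your plan relies on.
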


\begin{proof}
Note that the case $\kappa =6$ is trivial. We will treat the two cases $\kappa \in (6,8)$ and $\kappa \in (4,6)$ separately.  The distinction between these two cases reflects the change in sign of $\kappa -6$.  When $\kappa \in (6,8)$, we will use the $\SLE$/GFF coupling \cite{MS_IMAG} and the reversibility of $\SLE_\kappa$ \cite{MS_IMAG3}.  When $\kappa \in (4,6)$ we will use the $\CLE$ setup and results from \cite{cle_percolations} as reviewed just above in Section~\ref{subsec:percolation_review}.

The proof in both cases will make use of a variant of the resampling characterization of \emph{bi-chordal $\SLE$}.  More precisely, we will use a more general version of \cite[Theorem~4.1]{MS_IMAG2}, {which is proved in Appendix~\ref{sec:bichordal}}, that states that there is a unique law on pairs of curves $(\eta_1,\eta_2)$ connecting a pair of boundary points $x,y$ with~$\eta_1$ to the left of~$\eta_2$ so that the conditional law of~$\eta_1$ given~$\eta_2$ (resp.\ $\eta_2$ given $\eta_1$) is that of a certain $\SLE_\kappa(\rho)$ type process with force points on its left (resp.\ right) side.

\begin{figure}
\includegraphics[scale=0.85, page=16]{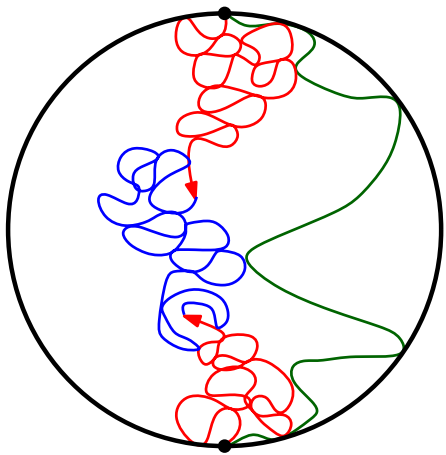}
\caption{\label{fig:sle_k_k_minus_6_reversal_6_to_8} Illustration of the setup used in the proof of Lemma~\ref{lem:sle_k_k_minus_6_reversal} in the case that $\kappa' \in (6,8)$.  The red paths show an $\SLE_{\kappa'}(\kappa'-6)$ path $\eta'$ and its time-reversal $\eta_R'$ from $-i$ to $i$ in $\D$ coupled with a GFF instance as a counterflow line drawn up to forward and reverse stopping times, $\tau$ and $\tau_R$.  The blue path is the remainder of $\eta'$.  Shown in green is the flow line $\eta$ of this GFF from $-i$ to $i$ with angle $\theta = 3\pi/2 - 2\lambda/\chi$.  It follows from \cite{MS_IMAG} that the conditional law of $\eta'$ given $\eta$ is that of an $\SLE_{\kappa'}$ process in the component of $\D \setminus \eta$ which is to the left of $\eta$.  By the reversibility of $\SLE_{\kappa'}$ proved in \cite{MS_IMAG}, the conditional law of $\eta'$ given $\eta'|_{[0,\tau]}$, $\eta_R'|_{[0,\tau_R]}$, and $\eta$ is that of an $\SLE_{\kappa'}$ process in the remaining domain.  Conversely, the 
conditional law of $\eta$ given all of $\eta'$ is independently that of an $\SLE_\kappa(\kappa-4;2-3\kappa/2)$ in the components which are to the right of $\eta'$.  Thus as these conditional laws are conformally invariant, the conformal invariance of the joint law of $\eta'$ and $\eta$ given $\eta'|_{[0,\tau]}$ and $\eta_R'|_{[0,\tau_R]}$ follows from the bi-chordal arguments of \cite[Section~4]{MS_IMAG2}.}	
\end{figure}

\begin{figure}
\includegraphics[scale=0.85, page=17]{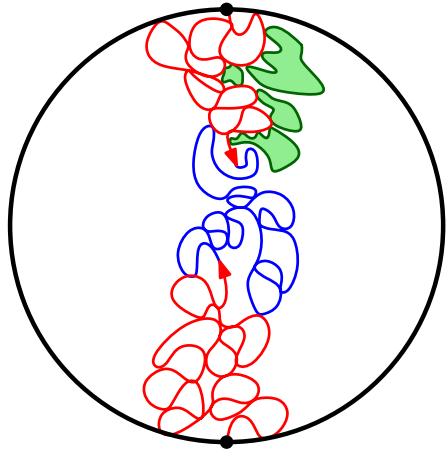}
\caption{\label{fig:sle_k_k_minus_6_reversal_4_to_6} Illustration of the setup used in the proof of Lemma~\ref{lem:sle_k_k_minus_6_reversal} in the case that $\kappa' \in (4,6)$. Shown is an $\SLE_{\kappa'}(\kappa'-6)$ process from $-i$ to $i$ in $\D$ viewed as a CPI in a $\CLE_\kappa$, $\kappa=16/\kappa' \in (8/3,4)$, process $\Gamma$.  If we condition on $\eta'$ up to a stopping time $\tau$, the time-reversal $\eta_R'$ of $\eta'$ up to an $\eta_R'$-stopping time $\tau_R$ and the loops of $\Gamma$ which touch this path segment (green loops), then the conditional law of the rest of $\eta'$ (blue path) is an $\SLE_{\kappa'}(\kappa'-6)$ in the remaining domain.  Conversely, if we condition on all of $\eta'$ (red and blue paths), then the conditional law of the loops which touch $\eta'$ is given by independent $\ccwBCLE_\kappa(-\kappa/2)$'s in the components of $\D \setminus \eta'$ which are surrounded by the right side of $\eta'$.  Thus as these conditional laws are conformally invariant, the conformal 
invariance of the joint law follows from the bi-chordal arguments of {Appendix~\ref{subsec:bc_three_paths}}.}
\end{figure}

We choose again the following notation: we fix $\kappa' \in (4,8)$ and let $\kappa = 16/\kappa' \in (2,4)$.  We first consider the case that $\kappa' \in (6,8)$; see Figure~\ref{fig:sle_k_k_minus_6_reversal_6_to_8} for an illustration of the argument.  (The reader may find it helpful to look at \cite[Figure~2.5]{MW_INTERSECTIONS}.)  We can view $\eta'$ as the counterflow line from $\infty$ to $0$ of a GFF $h$ on $\h$ with constant boundary conditions given by $-\lambda' + \pi \chi$ on $\R$.  Let $\eta$ be the flow line of $h$ with angle $\theta = 3\pi/2 - 2\lambda/\chi$ from $0$ to $\infty$.  Then $\eta$ is an $\SLE_\kappa(3\kappa/2-4;2-3\kappa/2)$ process.  (Note that $2-3 \kappa/2 > -2$ provided $\kappa < 8/3$ and $\kappa' > 6$.)  Let $\eta_R'$ be the time-reversal of $\eta'$ and let $\tau,\tau_R$ be stopping times for $\eta',\eta_R'$, respectively.  Then:
\begin{itemize}
\item It follows from \cite{MS_IMAG} that the conditional law of $\eta$ given $\eta'$ is independently that of an $\SLE_\kappa(\kappa-4;2-3\kappa/2)$ process in each of the components of $\h \setminus \eta'$ which are to the right of $\eta'$ and whose boundary have non-empty intersection with $\partial \h$.
\item The conditional law of $\eta'$ given $\eta$ is that of an $\SLE_{\kappa'}$ process in the component of $\h \setminus \eta$ which is to the left of $\eta$.  Consequently, by the reversibility of $\SLE_{\kappa'}$ processes for $\kappa' \in (4,8)$ proved in \cite{MS_IMAG3}, it follows that the conditional law of $\eta'$ given $\eta$, $\eta'|_{[0,\tau]}$, and $\eta_R'|_{[0,\tau_R]}$ is that of an $\SLE_{\kappa'}$ process in the remaining domain.
\end{itemize}
Since the two conditional laws are conformally invariant given $\eta'|_{[0,\tau]}$ and $\eta_R'|_{[0,\tau_R]}$, it follows from the bi-chordal $\SLE$ characterization ({Appendix~\ref{subsec:bc_two_paths}}) that the joint law of $\eta$ and $\eta'$ given $\eta'|_{[0,\tau]}$ and $\eta_R'|_{[0,\tau_R']}$ is conformally invariant.

We next consider the case that $\kappa' \in (4,6)$; see Figure~\ref{fig:sle_k_k_minus_6_reversal_4_to_6} for an illustration of the argument.  We let $\eta'$ be an $\SLE_{\kappa'}(\kappa'-6)$ process in~$\h$ from~$0$ to~$\infty$ {with force point located at $0^+$}.  Let $\eta_R'$ be the time-reversal of $\eta'$ and let~$\tau$ (resp.\ $\tau_R$) be a stopping time for~$\eta'$ (resp.\ $\eta_R'$).  We view $\eta'$ as a CPI (in the sense of \cite[Definition~2.1]{cle_percolations}) coupled with a~$\CLE_\kappa$, say~$\Gamma$, in~$\h$.  We note that then $\eta_R'$ is also a CPI coupled with $\Gamma$.  The CPI property of~$\eta_R'$ implies that $\eta_R'|_{[\tau_R,\infty)}$ is a CPI associated with the~$\CLE_\kappa$ given by including those loops of~$\Gamma$ which are contained in the complementary component of $\eta_R'([0,\tau_R])$ with $0$ on its boundary.  In particular, conditioned on this we have that the law of the remainder of $\eta'$ given $\eta|_{[0,\tau]}$ and $\eta_R'|_{[0,\tau_R]}$ and the loops of $\Gamma$ which hit $\eta_R'|_{[0,\tau_R]}$ is that of an $\SLE_{\kappa'}(\kappa'-6)$ in the remaining domain.  The same also holds if we switch the roles of $\eta'$ and $\eta_R'$.

Summarizing, we have that:
\begin{itemize}
\item Given $\eta'|_{[0,\tau]}$, $\eta_{R}'|_{[0,\tau_R]}$, and the loops of~$\Gamma$ which hit $\eta_R'|_{[0,\tau_R]}$, the remainder of $\eta'$ has the law of an $\SLE_{\kappa'}(\kappa'-6)$ process in the remaining domain.
\item {Given $\eta'|_{[0,\tau]}$, $\eta_{R}'|_{[0,\tau_R]}$, and the loops of~$\Gamma$ which hit $\eta'|_{[0,\tau]}$, the remainder of $\eta_R'$ has the law of an $\SLE_{\kappa'}(\kappa'-6)$ process in the remaining domain.}
\item Given all of $\eta'$, the conditional law of the loops of $\Gamma$ which hit $\eta'$ is given by a $\ccwBCLE_\kappa(-\kappa/2)$ in each of the complementary domains which are to the right of $\eta'$.
\end{itemize}
As explained in Appendix~\ref{subsec:bc_three_paths}, the conformal invariance of these three laws implies that the joint law of $\eta'$ and the loops of $\Gamma$ which hit $\eta'|_{[0,\tau]}$ and $\eta_R'|_{[0,\tau_R]}$ is conformally invariant given $\eta'|_{[0,\tau]}$ and $\eta_R'|_{[0,\tau_R]}$ in the remaining domain.
\end{proof}

As explained in \cite{mw2017connection}, building on this lemma, on Dub\'edat's commutation relations \cite{dub2007commutation} and some SLE estimates, it is actually possible to explicitly identify the hookup probability function $f_\kappa$ in terms of a ratio of hypergeometric functions.  We conclude this subsection with the following simpler result that just states that the function $f_\kappa$ is well-behaved (this will be sufficient for the purpose of the present paper).  Here and in the sequel, we refer to the definition of the cross-ratio of a conformal rectangle to be defined on $(0,\infty)$ and equal to $1$ for a conformal square such as the unit disk with the four boundary points $1, i , -1, -i$:

\begin{lemma}
\label{lem:paths_hookup2}
The function $f_\kappa (\cdot)$ is bounded away from $0$ and from $1$ on any compact subset of $(0, \infty)$. 
The function $f_\kappa(c)$ converges to either $0$ or $1$ as $c \to 0$ or $c\to \infty$. 
\end{lemma}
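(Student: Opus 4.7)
The plan is to exploit Lemma~\ref{lem:paths_hookup} to reduce $f_\kappa$ to a one-parameter function of the cross-ratio, and then establish separately (i) pointwise positivity $f_\kappa(c)\in(0,1)$ for each $c\in(0,\infty)$, (ii) continuity of $f_\kappa$ on $(0,\infty)$, and (iii) the forced degenerate behaviour as $c \to 0$ and as $c \to \infty$. Together, (i) and (ii) give that $f_\kappa$ and $1-f_\kappa$ are bounded away from $0$ on every compact subset of $(0,\infty)$, while (iii) gives the claimed limits.

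For (i), I would fix a reference domain (say $\D$ with three of the four marked boundary points at prescribed positions, and the fourth varying along an arc so that the cross-ratio traces out all of $(0,\infty)$). The conditional law of the remaining pieces of the four strands in $D(t,s,t_2,s_2)$ is characterised by the bichordal $\SLE_\kappa$ description recalled just after Lemma~\ref{lem:paths_confinv}. Both non-crossing link patterns of the four strands are realised with positive probability: one can concretely force each pattern by asking one of the two strands to stay in a small tubular neighborhood of a prescribed simple path in the reference domain, and each such local event has positive probability by standard $\SLE_\kappa(\underline{\rho})$ boundary-hitting and non-hitting estimates. This gives $f_\kappa(c)\in(0,1)$ for every $c$.

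For (ii), I would prove continuity by a coupling argument. A small perturbation of $c$ corresponds, after composing with a conformal map close to the identity, to a small perturbation of the four marked points in the reference domain. The conditional law in question is determined by the $\SLE_\kappa(\kappa-6)$ branch explorations up to the hook-up time; using the Markov property together with the local finiteness of $\CLE_\kappa$ from \cite{MS_IMAG4} to ensure that the hook-up time is uniformly tight as the marked points vary in a compact set, one can couple the configurations corresponding to nearby cross-ratios so that they agree on a high-probability event. This yields continuity of $f_\kappa$, and combined with (i) it gives the desired uniform bounds on compact subsets.

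For (iii), the limits $c\to 0$ and $c\to\infty$ correspond, up to conformal change of coordinates, to two cyclically adjacent marked boundary points collapsing together. If such an adjacent pair of points is at mutual distance $\eps$ (measured appropriately in the conformal geometry of the domain), a standard $\SLE_\kappa$ boundary-hitting estimate bounds by $O(\eps^{a})$, for some $a=a(\kappa)>0$, the probability that the two strands emanating from those points fail to hook up directly to one another. The forced link pattern depends on which adjacent pair collapses, so one extreme of $c$ forces $E_1$ (giving $f_\kappa(c)\to 1$) and the other forces $E_2$ (giving $f_\kappa(c)\to 0$). The main obstacle in this programme is the continuity step (ii), since a clean coupling requires uniform tail control on the hook-up time as the boundary data varies; by contrast, steps (i) and (iii) follow directly from standard $\SLE_\kappa(\underline{\rho})$ estimates.
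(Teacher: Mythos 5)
Your scheme stands or falls on step (ii), and that step is not established; as written it contains a genuine gap. The proposed coupling argument cannot work in the form you describe: if you perturb the four marked points (equivalently the cross-ratio), the resulting conditional laws of the strand configurations are in general mutually singular as laws on whole paths, so there is no coupling under which the two configurations literally ``agree on a high-probability event.'' What one can get from Girsanov/absolute-continuity arguments is total-variation control of the explorations up to stopping times at which the configuration stays non-degenerate; but the hook-up event is decided precisely at the moment the configuration degenerates (the cross-ratio tends to $0$ or $\infty$), which is exactly where such control is lost. Invoking the local finiteness of $\CLE_\kappa$ and ``uniform tightness of the hook-up time'' does not address this difficulty at all. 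Since in your decomposition the compact-uniform bounds of the lemma follow only from pointwise positivity \emph{plus} continuity, the proof is incomplete without (ii). Two secondary points: in (i), forcing a link pattern by a tube event needs more care than stated, because under $\p_{\ul z}$ the pattern is part of what is being sampled (the cleaner route is the resampling remark of Section~\ref{subsec:notation}: given one strand, the other is a chordal $\SLE_\kappa$, and both hook-ups are then seen to have positive probability); and in (iii), the ``standard boundary-hitting estimate'' is being applied to the conditioned four-strand law, which is precisely the non-explicit object $f_\kappa$ under study, so to make it standard you would have to pass through the $\SLE_\kappa(\kappa-6)$ description of Lemma~\ref{lem:sle_k_k_minus_6_reversal} and prove a quantitative hitting estimate there.

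The paper avoids continuity altogether. For the compact bounds it uses a Markov/renewal step: started from a configuration with cross-ratio $c$, with positive probability $p_\kappa(c)$ one can let $\eta_1$ grow until the cross-ratio of the updated configuration hits $1$ before the hook-up is decided, and then conformal invariance (Lemma~\ref{lem:paths_hookup}) gives $f_\kappa(c) \ge p_\kappa(c) f_\kappa(1)$, with the symmetric bound $1-f_\kappa(c) \ge p_\kappa(c)\,(1-f_\kappa(1))$ for the two-loop event; this yields uniform bounds on compacts without knowing anything about the regularity of $f_\kappa$. For the limits it uses a soft martingale argument rather than quantitative estimates: exploring one strand to the end almost surely reveals which hook-up occurs, so the conditional hook-up probability tends to $0$ or $1$ along the exploration, while by construction the cross-ratio simultaneously tends to $0$ or $\infty$. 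I would suggest either adopting this route, or, if you insist on your decomposition, accepting that you must genuinely prove continuity of $f_\kappa$ -- which appears to be at least as hard as the lemma itself.
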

\begin{proof}
It for instance suffices to start from the configuration $C(t, s, t_2, s_2)$ with cross-ratio $c$ and to note that it is possible (with positive probability $p_\kappa (c)$) to let $\eta_1$ grow further until the new cross-ratio hits $1$. Hence, we get that $f_\kappa ( c) \ge  p_\kappa(c) f_\kappa(1)$. The same argument can be applied to the two-loop event.

The result about the limit of $f_\kappa (c)$ as $c \to 0$ and $c \to \infty$ follows from the fact that when one explores one strand (say $\eta_1$) until then end,
one will eventually discover which hookup event holds. So, just before that moment, the conditional probability of the hookup event will tend to $0$ or $1$. But by construction, the cross-ratio will tend to $0$ or $\infty$ at the same time. 
\end{proof}

\subsection{The case $\kappa \in (8/3,4)$}
\label{subsec:hookup_83_to_4}

We are now going to establish the analog of Lemma~\ref{lem:paths_hookup} for the case $\kappa \in (8/3,4)$.  The setup will take a slightly different form than in the case of Lemma~\ref{lem:paths_hookup} because we cannot use the fact that the branches of the exploration tree used to build a $\CLE_\kappa$ are deterministic functions of the $\CLE_\kappa$ (indeed, it is one of the main results of the present paper that it is not the case), so we need to first explain how we define the joint law of the two explorations.  As we mentioned earlier, the content of the present subsection will not be used later in the present paper and it is included here for future reference (it is used in \cite{mw2017connection}).  We also leave out the (easier) case $\kappa =4$, as this one can be dealt with via the relation between $\CLE_4$ and the Gaussian free field (see for instance \cite {mw2017connection}).

Throughout, we suppose that we have a simply connected domain $D \subseteq \C$, that $x$ and $y$ are distinct boundary points, and that $\eta$ is an $\SLE_\kappa^1(\kappa-6)$ path in~$D$ from~$x$ to~$y$.  We also suppose that~$\wt{\eta}$ is the $\SLE_\kappa^{-1}(\kappa-6)$ from~$y$ to~$x$ whose trunk and loops are the same as those of~$\eta$, as described in Section~\ref{subsec:trunk_construction}.  We emphasize again that~$\wt{\eta}$ is not exactly the time-reversal of~$\eta$, but that the trace of $\eta$ and $\wt \eta$ coincide, that the trunk of $\wt{\eta}$ is the time-reversal of that of $\eta$, and that $\wt{\eta}$ visits exactly the same $\SLE_\kappa$-type loops as $\eta$.

When we explore $\eta$, we can choose any deterministic way to parameterize it (so that its ``time'' is a deterministic function of its trace). We will then use the natural filtration $({\mathcal F}_t)$ generated by the path. Note that (because the $\SLE_\kappa$ loops are simple while the trunk is not a simple curve), at a stopping time $\tau$, the knowledge of the last point on the trunk that has been visited by $\eta$ before time $\tau$ is contained in the $\sigma$-field ${\mathcal F}_\tau$.  We call this point $X(\tau)$, and we use the same notation for $\wt \eta$ (thus defining $\wt X (\wt \tau)$).

We are now going to discover a piece of $\eta$ and a piece of $\wt \eta$. More precisely, suppose that $\tau$ (resp.\ $\wt{\tau}$) is a stopping time for $\eta$ (resp.\ $\wt{\eta}$). 
We define the event 
\[{A}(\tau, \wt \tau) := \{ \eta([0,\tau]) \cap \wt{\eta}([0,\wt{\tau}]) = \emptyset, \ \eta (\tau) \not= X( \tau), \ \wt \eta (\wt \tau) \not= \wt X (\wt \tau)\}.\]
On this event, we call $D_{\tau, \wt \tau}$ the connected component of the complement of $\eta([0,\tau]) \cup \wt \eta([0, \wt \tau])$ that has $\eta (\tau)$ on its boundary. Note that $X(\tau)$ and $\wt X (\wt \tau)$ each correspond to two prime ends in $D_{\tau, \wt \tau}$. We will consider implicitly the one that is on the 
left-hand side of $\eta (\tau)$ (resp.\ the right-hand side of $\wt \eta (\wt \tau)$), i.e., on the side of the trunk.

\begin{lemma}
\label{lem:cle_k_83_4_conf_inv}
The conditional probability given ${\mathcal F}_\tau$,  $\wt {\mathcal F}_{\wt \tau}$ and the event $A = A(\tau, \wt \tau)$, of the event that $\eta(\tau)$ and $\wt \eta (\wt \tau)$ are part of the same $\CLE_\kappa$ loop is a function of the cross-ratio of the four marked points $\eta(\tau)$, $\wt{\eta}(\wt{\tau})$, $X(\tau)$ and $\wt X (\wt \tau)$ in $D_{\tau,\wt \tau}$.
\end{lemma}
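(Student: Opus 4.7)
The plan is to mirror the proof of Lemma \ref{lem:sle_k_k_minus_6_reversal} in the regime $\kappa' \in (4,6)$ (as illustrated in Figure \ref{fig:sle_k_k_minus_6_reversal_4_to_6}), adapted to the two-sided exploration setting of the present statement. Writing $\kappa' = 16/\kappa \in (4,6)$, the trunks of $\eta$ and $\wt\eta$ are mutually time-reversed $\SLE_{\kappa'}(\kappa'-6)$ paths that can be coupled to the $\CLE_\kappa$ collection $\Gamma$ made of the loops of $\eta$ (equivalently, of $\wt\eta$) as CPIs inside $\Gamma$, exactly as in Section~\ref{subsec:percolation_review} and Section~\ref{subsec:trunk_construction}. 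The goal is to establish that, conditionally on $\mathcal F_\tau$, $\tilde{\mathcal F}_{\wt\tau}$ and $A$, the joint law of the remainder of $\eta$ and $\wt\eta$ viewed in $D_{\tau,\wt\tau}$ is a conformally invariant function of the configuration $C:=(D_{\tau,\wt\tau},\eta(\tau),\wt\eta(\wt\tau),X(\tau),\wt X(\wt\tau))$; as a simply connected domain with four ordered boundary marked points is determined up to conformal equivalence by a single cross-ratio, the scalar conditional probability of the one-loop event will then automatically reduce to a function of that cross-ratio.

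The first step is to decode what $\mathcal F_\tau$ and $\tilde{\mathcal F}_{\wt\tau}$ know on $A$. Since $\eta(\tau)\neq X(\tau)$, the information in $\mathcal F_\tau$ naturally splits into three pieces: the initial segment of the trunk from $x$ to $X(\tau)$, the partial $\CLE_\kappa$ loop segment from $X(\tau)$ to $\eta(\tau)$, and all the $\ccwBCLE_\kappa(-\kappa/2)$ loops that $\eta$ has already completed along this trunk segment (which are attached to the right of the trunk). A symmetric decomposition holds for $\tilde{\mathcal F}_{\wt\tau}$, with the reverse trunk from $y$ to $\wt X(\wt\tau)$ playing the same role. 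Thus on $A$ the discovered data consist of two disjoint trunk pieces (the forward one from $x$ and the reversed one from $y$), plus two partial loops, plus the completed $\BCLE_\kappa$-type loops attached to these pieces.

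The second step is the resampling characterization. Exactly as in the second half of the proof of Lemma~\ref{lem:sle_k_k_minus_6_reversal}: (i) conditional on the two trunks and on all $\CLE_\kappa$ loops that touch either discovered trunk segment, the remaining trunk in $D_{\tau,\wt\tau}$ is an $\SLE_{\kappa'}(\kappa'-6)$ between the two trunk endpoints, by target-independence and the reversibility of $\SLE_{\kappa'}(\kappa'-6)$ established in \cite{MS_IMAG3}; (ii) conditional on the entire trunk, the loops attached to the right of the trunk are independent $\ccwBCLE_\kappa(-\kappa/2)$ collections in each right-of-trunk component of $D\setminus\text{trunk}$, by the construction recalled in Section~\ref{subsec:trunk_construction}. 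Each of these conditional laws is manifestly conformally invariant, and together they fit the hypotheses of the multi-path bi-chordal characterization of Appendix~\ref{subsec:bc_three_paths}: this uniquely determines the joint conditional law of (trunk continuation, completion of both partial loops, remaining attached $\BCLE_\kappa$'s) as a conformally invariant function of $C$. Since the one-loop event is a measurable function of this joint continuation, its conditional probability is conformally invariant in $C$, hence a function of the cross-ratio, as desired.

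The main obstacle is technical rather than conceptual: because the trunk is non-simple, the marked points $X(\tau)$ and $\wt X(\wt\tau)$ must be interpreted as specific prime ends of $D_{\tau,\wt\tau}$ (as the paper already flags above the lemma), and the $\ccwBCLE_\kappa(-\kappa/2)$ continuations live in components whose boundary is formed by mixed pieces of forward trunk, reverse trunk, and partial loops; one must check the orientation conventions so that the bi-chordal/tri-chordal characterization of Appendix~\ref{subsec:bc_three_paths} applies verbatim in the remaining domain. These book-keeping subtleties are the same ones already handled in the proof of Lemma~\ref{lem:sle_k_k_minus_6_reversal} in the $\kappa'\in(4,6)$ case and in Figure~\ref{fig:sle_k_k_minus_6_reversal_4_to_6}, so no fundamentally new ingredient is required.
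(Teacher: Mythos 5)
Your architecture is the right one and is essentially the paper's: freeze the discovered data, exhibit the two conditional laws (middle trunk piece given the loop data; loop data given the trunk), and then use a resampling/uniqueness argument to get conformal invariance of the joint conditional law, from which the cross-ratio statement follows. Your step (i) matches the paper (there the conditional law of the missing trunk piece $\wh{\eta}'$ given $(E,L,\wt{E},\wt{L})$ is identified as an $\SLE_{\kappa'}(\kappa'-6)$ from $X$ to $\wt X$ via the conformal Markov property of the exploration). But your step (ii) is not the conditional law that the resampling scheme actually needs, and this is a genuine gap. It is not enough to know that, given the \emph{entire} trunk, the attached loops form independent $\ccwBCLE_\kappa(-\kappa/2)$'s: the Markov chain must resample the loop completions $(L,\wt{L})$ given the trunk \emph{and} the already-discovered data $(E,\wt{E})$, which includes the two partially traced loops and the completed loops attached to the discovered trunk pieces. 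Establishing that this further-conditioned law is conformally invariant is a substantive step: the paper does it by describing the right boundary of the trunk-touching loops as an $\SLE_\kappa(3\kappa/2-6)$ and invoking the conformal invariance of such a path conditioned on part of its beginning and part of its end (\cite[Theorem~6.2]{MS_IMAG2}), together with a case analysis according to whether $\eta(\tau)$ and $\wt\eta(\wt\tau)$ lie on the boundary of one or two complementary components of the partially drawn picture. None of this is covered by "checking orientation conventions."

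Second, the uniqueness step does not follow from Appendix~\ref{subsec:bc_three_paths} "verbatim." That proposition characterizes measures invariant under two kernels that resample the whole path given the full loops touching the top (resp.\ bottom) side, with no frozen partial data, and its coupling proof exploits that specific geometry. In the present lemma the chain alternates between resampling $\wh{\eta}'$ and resampling $(L,\wt{L})$ conditionally on $(E,\wt{E})$, and the crux — which is precisely the new feature of this statement compared with Lemma~\ref{lem:sle_k_k_minus_6_reversal} — is the one-loop/two-loop dichotomy for the hook-up. The paper must run a bespoke ergodic coupling: for two-loop configurations one couples by resampling $L$ given $\wt{L}$ and then $\wt{L}$ given $L$ as ordinary $\SLE_\kappa$'s; for one-loop configurations one first shows that, after resampling $(L,\wt L)$ given the trunk, a two-loop configuration occurs with positive probability (using that $\SLE_\kappa(3\kappa/2-6)$ hits the boundary). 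Your proposal asserts that no new ingredient beyond the Lemma~\ref{lem:sle_k_k_minus_6_reversal} argument is required, but without these two ingredients the proof as written does not close.
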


\begin{proof} 
Let~$\eta'$  denote the trunk of~$\eta$ up until first hitting $X=X (\tau)$. Define $\wt{\eta}'$ to be the trunk of $\wt{\eta}$  up until hitting $\wt X = \wt X (\wt \tau)$.  We let $\wh \eta'$ be the missing middle piece of the trunk (joining $X$ and $\wt X$), so that the concatenation of $\eta'$, $\wh{\eta}'$, and (the time-reversal of) $\wt{\eta}'$ together form the trunk of $\eta$.

Let us define $E:= (\eta(t), t \le \tau)$ (resp.\ $\wt{E} = (\wt {\eta} (t), t \le \wt \tau)$), so that the $\sigma$-field generated by $E$ (resp.\ $\wt{E}$) is ${\mathcal F}_\tau$ (resp.\ $\wt{\CF}_{\wt{\tau}}$).

Let us call~$L$ the remaining part of the loop that $\eta$ has started to trace at time $\tau$, and let similarly~$\wt{L}$ be the remaining part of the loop of $\wt {\eta}$ that $\wt {\eta}$ has started to trace at time $\wt {\tau}$. Note that $L$ could contain $\wt { \eta} ( \wt {\tau})$ if $\eta$, $\wt{\eta}$, respectively, are exploring the same loop at time $\tau$, $\wt{\tau}$.

Using resampling techniques, we will show that the conditional law of $(L, \wt L)$ is a conformally invariant function of the domain and the four marked points (which clearly implies the lemma).  

We first note that conditionally on $(E , L)$, the law of $\wt {\eta}$ up until the time at which it hits $\eta([0,\tau]) \cup L$ is that of the beginning of an $\SLE_\kappa^{-1} (\kappa -6)$ from $y$ to $X$ in the component of the complement of $\eta([0,\tau]) \cup L$ with $y$ on its boundary.  If we now condition on $F:= (E ,L , \wt {E} , \wt {L})$ (and suppose that the event $A$ holds), and let $U$ denote the connected component of the complement of $\eta([0,\tau]) \cup L \cup \wt {\eta}([0 ,\wt {\tau}]) \cup \wt {L}$ that lies outside of $L$ and $\wt{L}$, and has $X$ and $\wt{X}$ on its boundary (which is the one where the middle piece of the trunk will be), we see that this middle piece of the trunk (i.e., $\wh {\eta}'$) will join $X$ and $\wt X$ in this domain. In fact, its conditional distribution (given $F$) is that of an $\SLE_{\kappa'}(\kappa' - 6)$ from $X$ to $\wt X$ in $U$ (this follows directly from  the conformal Markov property of the exploration mechanism: One can first discover $(E,  L)$, 
then discover $\wt \eta$ up to the time at which it hits $L$ or finishes drawing $\wt L$, and see that $\wh{\eta}'$ is the trunk of an $\SLE_\kappa^{1} (\kappa-6)$ in the remaining domain). 

Conversely, if we condition on the entire trunk, then we can describe the law of all the $\CLE_\kappa$ loops that are attached to it.  In particular, the outer boundary of the set of CLE loops of $\Gamma$ that touch the trunk is distributed like an $\SLE_\kappa(3\kappa/2-6)$ from $x$ to $y$ in the collection of complementary components which lie ``to the right'' of the trunk (in the sense described at the end of Section~\ref{subsec:trunk_construction} -- note that there could be in fact a countable collection of such components).  We can note that both $\eta (\tau)$ and $\wt \eta (\wt \tau)$ will be on this path. 

In particular, if we further condition on $(E ,  \wt {E})$, we get an $\SLE_\kappa (3 \kappa /2 - 6)$ conditioned on part of its beginning and part of its end. Recall from \cite[Theorem~6.2]{MS_IMAG2} that this conditional distribution is conformally invariant.  Once this right boundary is completed, one can easily complete the picture in a conformally invariant way, using the procedure described in Section~\ref{subsec:trunk_construction}. 

In the present setup, this implies in particular that the conditional distribution of $(L,\wt {L})$ has the following properties: 
We consider the connected components of the complement of  $\eta([0,\tau]) \cup \wt{\eta}([0 ,\wt{\tau}]) \cup \wh{\eta}'$ that have  $\eta(\tau)$ or $\wt{\eta}(\wt{\tau})$ on their boundary. There are two possibilities here: Either there is just one such component (that has both points on their boundary) in which case both hookup scenarios are possible, or there are two components (and then $L$ and $\wt {L}$ must be in these two different components). Then, the conditional distribution of $(L , \wt {L})$ is conformally invariant 
in the sense that: 
\begin {itemize} 
 \item In the former case (with one connected component), it is a conformally invariant function of the connected component 
 with the four marked points $\eta (\tau), \wt \eta (\wt {\tau}), X, \wt {X}$.
 \item In the latter case (with two connected components), $L$ and $\wt {L}$ are conditionally independent, and the conditional law of $L$ (resp.\ $\wt L$) is a conformally 
invariant function of its corresponding component and the two marked boundary points $\eta(\tau)$ and $X$ (resp. $\wt \eta ( \wt \tau)$ and $\wt X$).
\end {itemize}
So, the previous two paragraphs establish the existence and the conformal invariance of: 
\begin {itemize}
 \item The conditional distribution of $(L, \wt {L})$ given $(E , \wt {E} , \wh{\eta}')$. 
 \item The conditional distribution of $\wh{\eta}'$ given $(L, \wt {L},E, \wt {E})$. 
 \end {itemize}
We will make use of the conditional distribution of $L$ given $\wt{L}$ (and $E$, $\wt{E}$) as well as the conditional distribution of $\wt{L}$ given $L$ (and $E$, $\wt{E}$).  As we explained earlier, we note that if we condition on $\wt{E}$ and $\wt{L}$, then the conditional law of $\eta$ in the remaining domain (up until it hits $\wt{\eta}([0,\wt{\tau}])$) is that of an $\SLE_\kappa(\kappa-6)$ process.  On the event that $L$ and $\wt{L}$ are distinct, it thus follows that the conditional law of $L$ given $E$, $\wt{E}$, and $\wt{L}$ is that of an $\SLE_\kappa$ process from $\eta(\tau)$ to $X(\tau)$ in the component of $D \setminus ( \eta([0,\tau]) \cup \wt{\eta}([0,\wt{\tau}]) \cup \wt{L})$ with $\eta(\tau)$ on its boundary.  The same reasoning implies that we conversely have that the conditional law of $\wt{L}$ given $L$, $E$, and $\wt{E}$ is that of an $\SLE_\kappa$ process from $\wt{\eta}(\wt{\tau})$ to $\wt{X}(\wt{\tau})$ in the component of $D \setminus (\eta([0,\tau]) \cup L \cup \wt{\eta}([0,\wt{\tau}]
))$ with $\wt{\eta}(\wt{\tau})$ on its boundary.

We are in a setup where we can apply the resampling ideas of the type described in the appendix. We want to show that these two conditional distributions characterize uniquely the conditional law of $(L, \wt {L}, \wh{\eta}')$ given $(E, \wt {E})$ (and this will in particular imply the conformal invariance of this conditional distribution and the conformal invariance of the hookup probability).  We will consider the resampling kernel which is defined from the above resampling kernels as follows:
\begin{itemize}
\item With probability $1/3$, we resample $L$ given $(\wt{L},E,\wt{E})$ from its conditional law and then resample $\wt{L}$ given the new realization of $L$ and $(E,\wt{E})$ from its conditional law.  We then resample $\wh{\eta}'$ from its conditional law given the new realization of $(L,\wt{L})$ and $(E,\wt{E})$.
\item With probability $1/3$, we resample $(L,\wt{L})$ from its conditional law given $(\wh{\eta}',E,\wt{E})$.
\item With probability $1/3$, we leave the configuration unchanged.
\end{itemize}

Suppose that $(L_1, \wt{L}_1, \wh{\eta}_1', E, \wt{E})$ and $(L_2, \wt{L}_2, \wh{\eta}_2', E, \wt{E})$ are sampled conditionally independently given $E$, $\wt{E}$ from two laws which are invariant under the aforementioned resampling kernel.  What is needed to apply the ergodicity-based argument is to show that we can apply this resampling kernel a finite number of times to yield two configurations which coincide with positive probability (we will show that it in fact suffices to apply this resampling kernel twice).

Let us first suppose that we are on the event that $(L_1, \wt {L}_1)$ and $(L_2, \wt {L}_2)$ correspond to configurations with two loops (i.e., the event that the loops being drawn by $\eta,\wt{\eta}$ at the times $\tau,\wt{\tau}$, respectively, are not the same).  (In the remaining paragraphs, $(E, \wt E)$ are considered to be fixed).  In this case, there is a positive chance that in the first application of the resampling kernel we resample $L_i$ then $\wt{L}_i$ for $i=1,2$ from its conditional law.  As we mentioned above, the conditional law of $L_i$ given $\wt{L}_i$ for $i=1,2$ is that of an $\SLE_\kappa$ process in the remaining domain and the same is likewise true for the conditional law of $\wt{L}_i$ given $L_i$.  Therefore the configurations $(L_1, \wt{L}_1)$ and $(L_2,\wt{L}_2)$ will be coupled on a common probability space to agree with positive probability.  On this event, the next part of this resampling step can be taken to resample $\wh{\eta}_i'$ for $i=1,2$ to coincide.  Therefore, in this 
case, we have coupled the two entire configurations to agree with positive probability after one application of the resampling kernel.

Now let us suppose that we are on the event that $(L_1, \wt {L}_1)$ and/or $(L_2, \wt {L}_2)$ correspond to configurations with only one loop (i.e., the event that the loops being drawn by $\eta,\wt{\eta}$ at the times $\tau,\wt{\tau}$ are the same).  In this case, there is a positive chance that in the first application of the resampling kernel we resample $(L_i, \wt {L}_i)$ given $\wh{\eta}_i'$ and then $L_i$ and $\wt{L}_i$ will not be part of the same loop (this follows easily from the fact that $\SLE_\kappa (3 \kappa /2 - 6)$ does hit the boundary and from the conformal invariance of $\SLE_\kappa (3 \kappa /2 - 6)$ conditioned by part of its beginning and part of its end, so that the conditional probability that after resampling one has a configuration with two loops, is a function of the cross-ratio of the four marked points). Therefore after this one iteration step, there is a positive chance that we obtain a configuration with two loops.  As explained in the previous paragraph, a second application of 
the resampling kernel will yield a configuration in which both configurations exactly coincide with positive probability.
\end {proof}

Just as in the case $\kappa \in (4,8)$, the CLE hookup probabilities for $\kappa \in (8/3,4)$ are actually worked out in the paper \cite{mw2017connection}, building among other things on the present Lemma~\ref{lem:cle_k_83_4_conf_inv}, on commutation relation considerations and on some SLE estimates.  

A final remark is that in Lemma \ref {lem:cle_k_83_4_conf_inv},
we could have in fact chosen $\wt {\tau}$ to be a stopping time 
for the filtration  $\sigma (E , (\wt \eta (s), s \le t))$ (i.e., one can use information about $E$ to choose the stopping time $\wt \tau$) without changing the proofs. This fact can turn out be handy, as one can for instance choose $\wt \tau$ to be the first time at which the cross-ratio between the four marked points reaches a certain value.

\section{Proof of Theorem~\ref{thm:cle_not_determined}}
\label{sec:proofs}

This section is devoted to the proof of  the fact that the $\CLE_\kappa$ loops are not determined by the $\CLE_\kappa$ gasket when $\kappa \in (4,8)$. We will explain in the subsequent section what modifications in the argument of the proof of this result enable us to also establish  Theorem~\ref{thm:path_not_determined} and Theorem~\ref{thm:percolation_not_determined}. 

Throughout this section, $\kappa \in (4,8)$ is fixed and all constants that will appear in the proofs can depend on our choice of $\kappa$.

\subsection{Notation}
\label{subsec:notation}

When $\ul z = (z_1, z_2, z_3, z_4)$ is a 4-tuple of counterclockwise ordered points on the unit circle, we denote by $\p_{\ul z}$ the joint law of the configuration with the four strands $\eta_1, \wt \eta_1, \wt \eta_2$ and~$\eta_2$  that were described in the previous section, starting respectively from these four points. More precisely, this is the conformal image of the law described in Lemma~\ref{lem:paths_confinv}.

From now on, we will actually denote these paths by $\gamma_1, \gamma_2, \gamma_3$ and $\gamma_4$, and we will also use the notation ${\ul \gamma} = ( \gamma_1, \gamma_2, \gamma_3, \gamma_4)$.
We note that  $\gamma_1$ (resp.\ $\gamma_3$) hooks up with either $\gamma_2$ or $\gamma_4$ (and then ends at $z_2$ or $z_4$) but does not hook up with $\gamma_3$ (resp.\ $\gamma_1$). 
Recall that the conditional law of $\gamma_3$ given all of $\gamma_1$ is that of an $\SLE_{\kappa}$ process from its initial to its target point in the complement of $\gamma_1$ and that the same is true when we switch the roles of $\gamma_1$ and $\gamma_3$.  This show that the paths do various things with positive probability because one can first sample how they are hooked up using Lemma~\ref{lem:paths_hookup} and then resample each of the paths given the other path one at a time. We will use this at several stages in the proof.

To prove Theorem~\ref{thm:cle_not_determined}, we will first show that, on the positive probability event that $\gamma_1$ and $\gamma_3$ intersect each other, it is not possible to determine whether $\gamma_1$ terminates at $z_2$ or $z_4$ when one just observes the union of the \emph{ranges} of $\gamma_1$ and $\gamma_3$.  
We let $\ul{o}= (o_1,\ldots,o_4 )= (-i,1,i,-1)$ and for each $\delta$, we define~$\CT_\delta$ to be the collection all $4$-tuples $\ul{z}$ where for each~$j$, $| z_j - o_j | <\delta$.

We will simple write $\CT := \CT_{1/100}$ (we choose the value $1/100$ just because it will be small enough for our purposes).  
The results of the previous section show that there exists a positive $p_0 = p_0 (\kappa)$ such that the paths $\ul \gamma$ hookup in each of the two possible ways with $\p_{\ul z}$ probability at least $p_0$ for all $\ul{z} \in \CT$. 

We also denote by $\ul \nu$ the measure on quadruples ${\ul {z}}  {\in} \CT$ obtained by sampling independently each $z_j$ uniformly on the part of the unit circle that is at distance less than $1/100$ from $o_j$.
We define then the law $\p_{\ul \nu}$ which is obtained by first choosing ${\ul z}$ according to $\ul \nu$ and then sampling $\p_{\ul z}$.    

Let us also introduce some further notation that we will use throughout this section. 
We denote by $U(r)= U(r, {\ul \gamma})$ the event 
that all four strands $\gamma_1, \ldots, \gamma_4$ reach the circle of radius $r$ around the origin, and we call $t_j (r)$ their respective hitting times of this circle. 
On this event $U(r)$, we then  define the connected component $D_r = D_r ({\ul \gamma})$  of $ \D \setminus \cup_j \gamma_j([0, t_j (r)])$ that contains the origin, and the conformal transformation $\psi_r=\psi_{r, {\ul \gamma}}$ from $D_r ( {\ul \gamma})$ back onto $\D$ with $\psi_{r, {\ul \gamma}} (0)=0$ and $\psi_{r, {\ul \gamma}}' (0) > 0$. We then also consider the image ${\ul z}(r) = {\ul z} (r, {\ul \gamma})$ under $\psi_{r, {\ul \gamma}}$ of the four endpoints  $\gamma_j (t_j (r))$. The previous considerations show that conditionally on $U(r)$ and on the four strands up to the hitting times
$t_j (r)$, the law of the image ${\ul \gamma}^r$ of the remaining to be discovered parts of the four strands under $\psi_{r, {\ul \gamma}}$ is exactly $\p_{{\ul z} (r, {\ul \gamma})}$.

\subsection{A priori four-arm probability estimates}
\label{subsec:paths_together}
 
The first main purpose of this subsection is to derive  Lemma~\ref{lem:first_moment}, which is a crude lower bound of the probability that the four strands $\gamma_1, \ldots, \gamma_4$ all get close to the origin in a  fairly well-separated way.  We note that our goal here is to prove in a short way a result that will be sufficient for our purpose, and that it would not be difficult to derive somewhat stronger statements. 

When $n \in \N$, we define $\eps_n := 2^{-n}$ (we will use this notation throughout this section) and the event $E_n= U(\eps_n, {\ul \gamma})$ that all four paths $\gamma_1, \dots, \gamma_4$ reach the circle of radius $\eps_n$ around the origin. Let us first point out the following fact:
\begin{lemma}
 \label{prel:lem}
 There exist $\alpha_0 \in (0,2)$ and some constant $c_0 > 0$ such that
$\p_{\ul \nu} [ E_n ] \ge c_0 \times (\eps_n)^{\alpha_0}$  
for all $n \ge 1$.
\end{lemma}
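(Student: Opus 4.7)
The plan is to obtain $E_n$ by successively growing the four paths $\gamma_1,\gamma_3,\gamma_2,\gamma_4$, losing at each step only a polynomial factor in $\eps_n$.

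The main ingredient I would use is a uniform one-arm estimate: for an $\SLE_\kappa(\rho)$ process with $\rho > -2$ in a simply connected domain $D$ starting from a prescribed boundary point, the probability that it enters a disk of radius $\eps$ around any specified interior target point at distance of order $1$ from $\partial D$ is at least $c\eps^a$, where $c,a > 0$ depend only on $\kappa,\rho$ and can be chosen uniformly as the configuration varies in a compact family of conformal rectangles. This follows from standard $\SLE$ derivative estimates combined with a Radon--Nikodym comparison between $\SLE_\kappa(\rho)$ and $\SLE_\kappa$.

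Given this estimate, I would proceed in four steps. First, under $\p_{\ul z}$ the path $\gamma_1$ is an initial segment of the $\SLE_\kappa(\kappa-6)$ branch of the $\CLE_\kappa$ exploration tree from $z_1$, so the one-arm estimate gives that $\gamma_1$ reaches the $\eps_n$-circle with probability at least $c\eps_n^{a_1}$, uniformly in $\ul z \in \CT$. Second, conditionally on $\gamma_1$ stopped at that first hitting time, the conformal Markov property of the exploration tree shows that $\gamma_3$ is again an $\SLE_\kappa(\kappa-6)$-type branch in the complement, and the same estimate yields conditional probability at least $c\eps_n^{a_3}$ of $\gamma_3$ reaching the $\eps_n$-circle. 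Third, by Lemma~\ref{lem:paths_hookup2} the two-loop hookup event $E_2$ has conditional probability bounded below by a positive constant (after restricting to a further positive-probability sub-event on which the endpoints of $\gamma_1,\gamma_3$ on the $\eps_n$-circle are well-separated). Fourth, conditional on $E_2$ and the previous steps, $\gamma_2$ and $\gamma_4$ are chordal $\SLE_\kappa$-type paths in the complement of $\gamma_1 \cup \gamma_3$ running from $z_2$, respectively $z_4$, toward the endpoints of $\gamma_1,\gamma_3$ on the $\eps_n$-circle; the one-arm estimate gives conditional probabilities at least $c\eps_n^{a_2}$ and $c\eps_n^{a_4}$ that they reach the $\eps_n$-circle.

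Multiplying the four polynomial lower bounds and integrating over $\ul z$ against $\ul\nu$ gives $\p_{\ul\nu}[E_n] \geq c_0 \eps_n^{\alpha_0}$ with $\alpha_0 = a_1+a_2+a_3+a_4 > 0$. The main technical obstacle will be ensuring $\alpha_0 < 2$: a naive sum of four one-arm exponents can exceed $2$, so I would instead run a shell-by-shell induction across the dyadic annuli, showing that conditionally on the four paths reaching the $\eps_k$-circle in a well-separated configuration, the probability that they reach the $\eps_{k+1}$-circle in a similarly well-separated configuration is bounded below by a universal constant (with no extra polynomial cost per shell beyond a single power). The resulting four-arm annulus crossing exponent is strictly less than $2$, consistent with the fact that $\CLE_\kappa$ double points have positive Hausdorff dimension, and this yields the desired $\alpha_0 \in (0,2)$.
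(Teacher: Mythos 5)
Your strategy---building the four-arm event by growing the strands one at a time with uniform one-arm estimates and then repairing the exponent by a shell-by-shell induction---is genuinely different from the paper's argument, but it has a gap at exactly the point that matters, namely the requirement $\alpha_0<2$. The successive-growth bound gives a polynomial lower bound with exponent $a_1+a_2+a_3+a_4$, which, as you yourself note, may well exceed $2$. Your proposed fix, a per-dyadic-shell constant $p$ for extending a well-separated four-strand configuration one more scale, only yields the exponent $\log_2(1/p)$ (with $\eps_N$-scales, $\log_{1/\eps_N}(1/p)$), and nothing in your argument shows that this is below $2$ (equivalently, that $p>1/4$ per factor-of-two shell). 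The closing sentence, that the resulting exponent is ``strictly less than $2$, consistent with the fact that $\CLE_\kappa$ double points have positive Hausdorff dimension,'' is an assertion rather than a derivation: consistency is not an implication, and converting the dimension fact into the needed probability lower bound requires a covering/first-moment argument. That conversion is precisely the content of the paper's proof, run in the contrapositive: one assumes that $\p_{\ul \nu}[E_{n_k}]=o(\eps_{n_k}^{\alpha})$ along a subsequence for every $\alpha\in(0,2)$, upgrades this (via the M\"obius maps $\psi_z$ and a comparison of the laws of the starting points) to a bound that is uniform over centers $z$ in a small disk, deduces a bound on the expected area of the set of points whose $\eps_{n_k}/2$-neighborhood meets both $\gamma_1$ and $\gamma_3$, and concludes that $\gamma_1\cap\gamma_3$ would have Hausdorff dimension $0$ near the origin, contradicting the positive double-point dimension of $\SLE_\kappa$ for $\kappa\in(4,8)$ from \cite{MW_INTERSECTIONS}. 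So the dimension result is the essential input that produces $\alpha_0<2$, and your proof must invoke it through such an argument rather than as a plausibility check; as written, your scheme proves only $\p_{\ul\nu}[E_n]\ge c\,\eps_n^{\alpha_0}$ for \emph{some} $\alpha_0>0$, which is not enough for the second-moment estimates that follow.

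Two secondary points. First, the ``uniform one-arm estimate'' via a Radon--Nikodym comparison between $\SLE_\kappa(\kappa-6)$ and $\SLE_\kappa$ is delicate for $\kappa\in(4,8)$, since these curves hit the boundary and the comparison degenerates near the force point; uniformity over $\ul z\in\CT$ and over the conditional configurations arising mid-construction needs an argument. Second, the conditional law of $\gamma_3$ given a \emph{partial} piece of $\gamma_1$ is not simply another exploration branch; the clean statement available (and the one the paper uses) is that given \emph{all} of $\gamma_1$ the law of $\gamma_3$ is a chordal $\SLE_\kappa$ in the complement, so your step two would have to be reorganized accordingly. These are repairable, but the exponent issue is not repairable within your scheme as written.
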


\begin {proof}
We will use here a known estimate about the set of double points of an SLE curve when $\kappa \in (4,8)$. This estimate follows for instance 
 from \cite[Theorem~1]{MW_INTERSECTIONS}, where the almost sure double point dimension of $\SLE_{\kappa}$ is actually derived, but we note that it would also be possible to 
 derive the weaker statement that we will use here in a fairly elementary way.  Indeed, we will just need a rather crude lower-bound of the first moment. 

Suppose that the lemma would not hold, and let us then prove that it would imply an estimate that in turn implies that almost surely, the Hausdorff dimension of the set of  points that are in a certain fixed neighborhood of the origin and on $\gamma_1 \cap \gamma_3$ is almost surely equal to $0$.  This leads to a contradiction, because we know that this is not the case, see for instance \cite[Theorem~1]{MW_INTERSECTIONS}.

First of all, let us suppose that $|z|$ is very small, and consider the conformal transformation $\psi_z \colon \D \to \D$ with $\psi_z(0) = z$ and $\psi_z'(0)> 0$.  We note that $|\psi_z'|$ converges uniformly to $1$ on the closed unit disk when $| z  |  \to 0$, and that $\psi_z'$ converges uniformly to $1$ on the unit circle. 
Let $\p_{\ul \nu'}$ denote the same law as $\p_{\ul \nu}$ except that the four starting points are now chosen uniformly on the set of 4-tuples such that for each $j$, $| z_j - o_j | < 1/200$ (instead of $1/100$), and let $U(\eps_n, {\ul \gamma}, z)$ denote the event that all four paths $\gamma_1, \dots, \gamma_4$ reach the circle of radius $\eps_n/2$ around $z$.
Note that when $r$ is small enough, then the image under $\psi_z$ of the disk of radius $\eps_n$ around $0$ contains the disk of radius $\eps_n /2$ around $z$ (for all $n$), 
and we can also control the image of the uniform measure on the part of the unit circle at distance smaller than $1/100$ of $o_j$ under $\psi_z$ (and see that its density is 
everywhere larger than $(2+ c)$ times that of the uniform uniform measure on the part of the unit circle at distance smaller than $1/100$ for any very small given $c$). 
It then follows readily by conformal invariance that when $r$ is fixed and small enough, then for all $n$ and $|z| < r$,
\[\p_{\ul \nu'} [ U ( \eps_n, {\ul \gamma }, z ) ] \le 32 \p_{\ul \nu} [ E_n ]\]
(where we have chosen $c$ so that $(2+c)^4 =32$).
If we now suppose that the lemma would not hold, then it would imply that for each $\alpha \in (0,2)$, there exists $n_k \to \infty$ such that 
\[ \p_{\ul \nu} [E_{n_k}] / \eps_{n_k}^\alpha \to 0 \quad\text{as}\quad k \to \infty.\]
Hence, 
$$
\sup_{z : | z | < r }  \p_{\ul \nu'} [ U ( \eps_{n_k}, {\ul \gamma }, z )]  = o ( \eps_{n_k}^\alpha ) \quad\text{as}\quad k \to \infty.
$$
This implies an upper bound on the expectation of the area of the set of points in the disk of radius $r$ around the origin that are in the $\eps_{n_k} /2$-neighborhood of both $\gamma_1$ and $\gamma_3$, and one can conclude that, under the probability measure $\p_{\ul \nu'}$, the Hausdorff dimension of $\gamma_1 \cap \gamma_3 \cap \{ z : | z| < r \}$ is at most $2 - \alpha$. As this is true for all $\alpha \in (0,2)$, we conclude that this Hausdorff dimension is almost surely equal to $0$, and as we have already explained, this is not the case.   
\end {proof}

For each $\delta >0$, we then define the event $F_{n, \delta} \subset E_n$ that $\min_{i \neq j} |z_i (\eps_n, {\ul \gamma}) - z_j (\eps_n, {\ul \gamma}) | \geq \delta$. 
In other words, the event $F_{n, \delta}$ says that in terms of harmonic measure from the origin, the four points of ${\ul z} ( \eps_n, {\ul  \gamma})$ are $\delta / (2 \pi)$-separated in $D_{\eps_n} ( {\ul \gamma})$.

\begin{lemma}
\label{lem:first_moment}
There exists $\delta_0 >0$ so that for infinitely many values of $n$,
$\p_{\ul{ \nu}}[ F_{n , \delta_0} ]  \ge c_0 (\eps_n)^{\alpha_0} / 8$.
\end{lemma}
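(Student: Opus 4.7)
The plan is to bootstrap the crude bound $\p_{\ul\nu}[E_n] \geq c_0 \eps_n^{\alpha_0}$ from Lemma~\ref{prel:lem} to the separated version by showing that bunched configurations of the four endpoints $\ul z(\eps_n,\ul\gamma)$ make only a subdominant contribution. The main tools are the conformal Markov property for the four-strand process packaged in Lemma~\ref{lem:paths_confinv}, together with the hook-up asymptotics in Lemma~\ref{lem:paths_hookup2}.

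The first step is to establish a \emph{bunching suppresses further exploration} estimate: there exist $K \in \N$ and a function $g:(0,1)\to(0,1)$ with $g(\delta)\to 0$ as $\delta\to 0$, such that for every $\ul z \in (\partial\D)^4$ with $\min_{i\neq j}|z_i-z_j|<\delta$,
\[
\p_{\ul z}[E_K] \leq g(\delta).
\]
The intuition is that when $\ul z$ is $\delta$-bunched, the cross-ratio of the four boundary points is extreme, so by Lemma~\ref{lem:paths_hookup2} the hook-up probability $f_\kappa$ is close to $0$ or $1$; in either case two adjacent strands are typically forced to complete a hook-up inside a small neighborhood of the bunching location, preventing all four strands from reaching scale $\eps_K$. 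Quantifying this would come from iterating Lemma~\ref{lem:paths_hookup2} across dyadic annuli and summing the resulting geometric tails.

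The second step is to combine this with the conformal Markov property. As set up in Section~\ref{subsec:notation}, conditionally on $\ul\gamma|_{[0, t_j(\eps_n)]}$, $j=1,\ldots,4$, on the event $E_n$, the continuation ${\ul\gamma}^{\eps_n}$ viewed through $\psi_{\eps_n,\ul\gamma}$ has law $\p_{\ul z(\eps_n,\ul\gamma)}$. Splitting according to whether $\ul z(\eps_n,\ul\gamma)$ is $\delta_0$-separated or not,
\[
\p_{\ul\nu}[E_{n+K}] \;=\; \E_{\ul\nu}\bigl[\one_{E_n}\,\p_{\ul z(\eps_n,\ul\gamma)}[E_K]\bigr] \;\leq\; \p_{\ul\nu}[F_{n,\delta_0}] \;+\; g(\delta_0)\,\p_{\ul\nu}[E_n].
\]
Pairing this with Lemma~\ref{prel:lem} applied at scale $n+K$, i.e.\ $\p_{\ul\nu}[E_{n+K}] \geq c_0\,\eps_n^{\alpha_0}/2^{K\alpha_0}$, one gets the recursive bound $c_0\eps_n^{\alpha_0}/2^{K\alpha_0} \leq \p_{\ul\nu}[F_{n,\delta_0}] + g(\delta_0)\p_{\ul\nu}[E_n]$.

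The final step is a pigeonhole/contradiction argument to extract the uniform $\delta_0$. For each $n$, let $\delta_\ast(n)$ be the largest $\delta>0$ such that $\p_{\ul\nu}[F_{n,\delta}] \geq c_0\eps_n^{\alpha_0}/4$; this exists because $\p_{\ul\nu}[F_{n,\delta}]\to \p_{\ul\nu}[E_n]\geq c_0\eps_n^{\alpha_0}$ as $\delta\to 0$. If $\liminf_n \delta_\ast(n) > 0$ one takes $\delta_0$ below this $\liminf$ and the conclusion follows along a subsequence. Otherwise $\delta_\ast(n)\to 0$, which means that for every fixed $\delta_0>0$, $\p_{\ul\nu}[F_{n,\delta_0}] < c_0\eps_n^{\alpha_0}/4$ for all large $n$; then the displayed recursive bound, with $K$ chosen so that $2^{K\alpha_0}<4$ and with $\delta_0$ chosen so that $g(\delta_0)$ is much smaller than the residual $\p_{\ul\nu}[E_n]$ (which can be iteratively upper-bounded via the same Markov step applied to $\sup_{\ul z}\p_{\ul z}[E_1]$), yields a contradiction. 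The main obstacle is the quantitative bunching estimate of the first step: Lemma~\ref{lem:paths_hookup2} only gives the asymptotic statement $f_\kappa(c)\to 0,1$ at degenerate cross-ratios, and upgrading this to a uniform rate $g(\delta)$ valid across all bunched configurations and across multiple scales is the delicate ingredient; an alternative, possibly cleaner route is to bound the expected number of $\eps_n$-boxes in a fixed neighborhood of the origin that intersect $\gamma_1\cap\gamma_3$ with a \emph{well-separated} four-arm pattern, extracting the lower bound from the same dimension input used in Lemma~\ref{prel:lem} and a ``typical double point is macroscopically separated'' heuristic.
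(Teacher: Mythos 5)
Your overall strategy (bunched endpoints at scale $\eps_n$ make it unlikely that all four strands survive to the next scale, combined with the conformal Markov property and a contradiction against Lemma~\ref{prel:lem}) is the same as the paper's, and your Step~1 is essentially the paper's first observation, namely that for $\delta_0$ small one has $\p_{\ul \nu}[E_{n+1} \giv E_n \setminus F_{n,\delta_0}] \le 1/8$ (the paper proves this by the dichotomy of one endpoint being $\delta_0^{1/2}$-close to exactly one or to at least two of the others, using Lemma~\ref{lem:paths_hookup2}). However, your final contradiction step has a genuine gap. You run the recursion in \emph{absolute} terms, against the threshold $c_0\eps_n^{\alpha_0}/4$, and are then left to control the error term $g(\delta_0)\,\p_{\ul \nu}[E_n]$. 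But Lemma~\ref{prel:lem} only gives a \emph{lower} bound on $\p_{\ul \nu}[E_n]$; nothing rules out $\p_{\ul \nu}[E_n] \gg \eps_n^{\alpha_0}$, in which case $g(\delta_0)\,\p_{\ul \nu}[E_n]$ dominates $\eps_n^{\alpha_0}$ for every fixed $\delta_0$ and no contradiction results. Your proposed remedy --- iterating $\sup_{\ul z}\p_{\ul z}[E_1]$ to get an upper bound on $\p_{\ul \nu}[E_n]$ --- only yields some exponential rate $2^{-n\alpha_1}$ with $\alpha_1$ a priori unrelated to (and possibly much smaller than) $\alpha_0$, which is not enough. (The dichotomy via $\delta_\ast(n)$ also conflates $\liminf_n\delta_\ast(n)=0$ with $\delta_\ast(n)\to 0$, but that is secondary.)

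The fix is to make the recursion \emph{multiplicative in $\p_{\ul \nu}[E_n]$}, which is exactly what the paper does: if the conclusion fails for a given $\delta_0$, then for all large $n$ one has $\p_{\ul \nu}[F_{n,\delta_0}] < c_0\eps_n^{\alpha_0}/8 \le \p_{\ul \nu}[E_n]/8$ (here the lower bound of Lemma~\ref{prel:lem} is used to pass from the absolute to the relative statement), and combining this with the one-step bunching estimate gives
\[
\p_{\ul \nu}[E_{n+1}] \;\le\; \p_{\ul \nu}[F_{n,\delta_0}] + \tfrac18\,\p_{\ul \nu}[E_n\setminus F_{n,\delta_0}] \;\le\; \tfrac14\,\p_{\ul \nu}[E_n],
\]
so that $\p_{\ul \nu}[E_n]$ would decay like $4^{-n}=(\eps_n)^2$, contradicting $\p_{\ul \nu}[E_n]\ge c_0\eps_n^{\alpha_0}$ with $\alpha_0<2$. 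This closes the loop without ever needing an upper bound on $\p_{\ul \nu}[E_n]$, and it yields directly $\p_{\ul \nu}[F_{n,\delta_0}]\ge \p_{\ul \nu}[E_n]/8 \ge c_0\eps_n^{\alpha_0}/8$ for infinitely many $n$. With this replacement of your Case~2 argument (and noting that a one-step estimate, $K=1$, suffices so the constraint $2^{K\alpha_0}<4$ is not needed), your proposal becomes the paper's proof.
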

\begin{proof}
Let us first note that, if we choose $\delta_0 >0$ small enough, then for all $n$, 
$ \p_{{\ul \nu }}  [ E_{n+1} | (E_n \setminus F_{n, \delta_0})] \le 1/8 $. Indeed, when $E_n \setminus F_{n, \delta_0}$ holds, then at least two of the strands of ${\ul \gamma}$ corresponding to very close points $z_j (t_j (\eps_n))$ will be very likely to hook up without reaching the circle of radius~$\eps_{n+1}$ (here, we can consider 
separately the following two cases: Either, one point $z_j (t_j (\eps_n))$ is $(\delta_0)^{1/2}$-close to only one other $z_{j'} (t_{j'} (\eps_n))$, in which case the two corresponding strands will hookup with a probability close to $1$ because of Lemma \ref {lem:paths_hookup2}, and will therefore have a very small probability of reaching the circle of radius $\eps_{n+1}$. Or one   $z_j (t_j (\eps_n))$ is $(\delta_0)^{1/2}$-close to at least two other $z_{j'} (t_{j'} (\eps_n))$, in which case, two of the three corresponding strands do necessarily hookup and will have a very small probability of reaching the circle of radius $\eps_{n+1}$ --- we leave the details to the reader).

Suppose now that for such a choice of $\delta_0$, and for all $n$ greater than some $n_0$, 
$ \p_{{\ul \nu}} [ F_{n, \delta_0} ] \le \p_{\ul \nu} [ E_n ] /8$.
Then, we would get that for all $n > n_0$,
\[ \p_{{\ul \nu}}  [ E_{n+1} ] \le \p_{{\ul \nu}}  [ F_{n, \delta_0}] + \p_{{\ul \nu}} [ E_n \setminus F_{n, \delta_0}] / 8 
\le \p_{{\ul \nu}}  [ E_n ] / 4\]
which would imply that $\p_{{\ul \nu}}  [ E_n]$ is bounded by a constant times $4^{-n} = (2^{-n})^2$. But we know from the previous estimate that this is not the case, and we
can therefore conclude that $\p_{{\ul \nu}}  [ F_{n , \delta_0} ] \ge \p_{{\ul \nu}}  [ E_n] / 8 $ for infinitely many values of $n$. 
\end{proof}

We are now going to define a new event by ``composition'' of $F_{n, \delta_0}$ with other events, which is an idea that we will repeatedly use. 
One reason for introducing these other events is that we cannot directly apply the estimate of the lemma to $\ul \gamma^{\eps_n}$ (we will also call this set of paths $\ul f_n (\ul \gamma)$) when $F_{n, \delta_0}$ occurs because we only know in this case that the four points $\ul z ( \eps_n , \ul \gamma)$ are $\delta_0$-separated, while the estimate of the lemma applies for starting points that are distributed according to $\ul \nu$. 

Let us now define three events $G_1$, $G_2$ and $G_3$: 
\begin {itemize} 
\item Suppose that the 4-tuple $\ul z$ is in $\CT_{\delta_0}$. 
We then explore successively each of the four strands of $\gamma$ up to the first time (if it exists) at which 
they reach a capacity (as measured via the log-conformal radius from the origin, as customary for radial Loewner chains) equal to $\delta_0^2$. So we first explore the first strand up to that time, then 
map back, and then grow the image of the second strand up to that time and iterate until we did explore the fourth strand.

If we map back the obtained configuration to the unit disk (using the renormalization of these maps at the origin) provided the four stands did make it till that capacity, the four tips get mapped onto a 4-tuple
$\ul z^1 := \ul z^1 (\ul \gamma)$, and we call $e_1 (\ul \gamma)$ the four strands emanating from $\ul z^1$. 
Note that the conditional law on $e_1 (\ul \gamma)$ is then $\p_{\ul z^1}$. See 
the left part of Figure \ref {sketchAppB2} for a sketch. 
When $\delta_0$ is small enough, it is easy to see (see Appendix \ref {AppB} for some details) that 
the density of $\ul z^1$ on $ \CT_{\delta_0}$ is bounded from below by some positive constant, uniformly with respect to the 4-tuples $\ul z \in \CT_{\delta_0}$. 
By then tossing an additional independent uniform random variable in order to discard some configurations, we see that it is possible to define an event $G_1$ that 
has a positive probability $c_1$ which is independent of $\ul z \in \CT_{\delta_0}$, so that the conditional law of $\ul z^1$ given $G_1$ is exactly the measure $\ul \nu$. 

Note that by choosing $\delta_0$ small enough, we can also ensure that Lemma \ref{lem:first_moment} holds too. 
\begin{figure}[ht!]
\begin{center}
\includegraphics[width=0.35\textwidth]{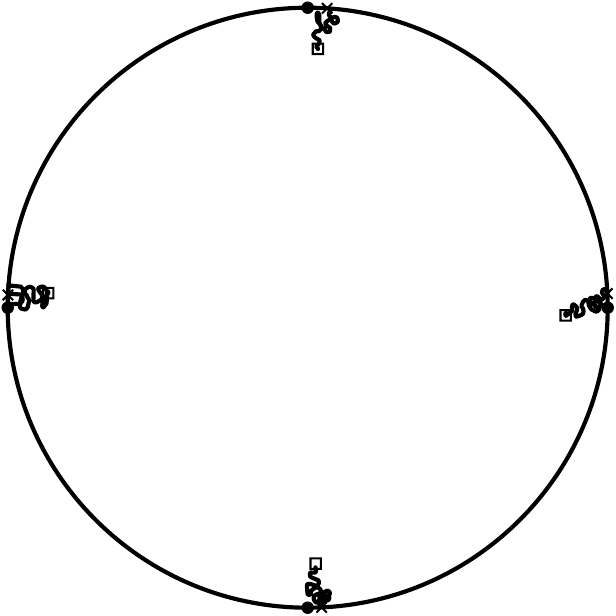} 
\hskip 8mm
\includegraphics[width=0.35\textwidth]{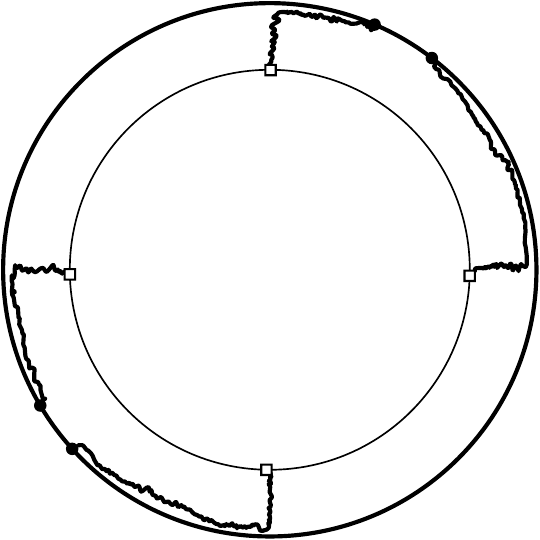}
\end {center}
\caption{\label{sketchAppB2} The events $G_1$ and $G_2$.}\end{figure}

\item We now suppose that $\delta_1 = \delta_0/8$ and that $\delta_0$ is fixed in the way that we have just described.
Consider now a 4-tuple of starting points $\ul z$ that are $\delta_1$-separated, and explore the four strands starting from $\ul z$ up to their 
respective hitting time of the circle of radius $3/4$ around the origin. We denote by $G_2$ the event that $\ul z ( 3/4, \gamma) \in \CT_{\delta_1}$. See Figure \ref {sketchAppB2} for a sketch. 
We claim that the probability of $G_2$ is bounded from below by some constant $c_2$, independently of the choice of $\delta_1$-separated starting points $\ul z$. 
When $G_2$ holds, we note by $e_2 ( \ul \gamma)$ the four strands $\ul \gamma^{3/4}$. 

\begin{figure}[ht!]
\begin{center}
\includegraphics[width=0.35\textwidth]{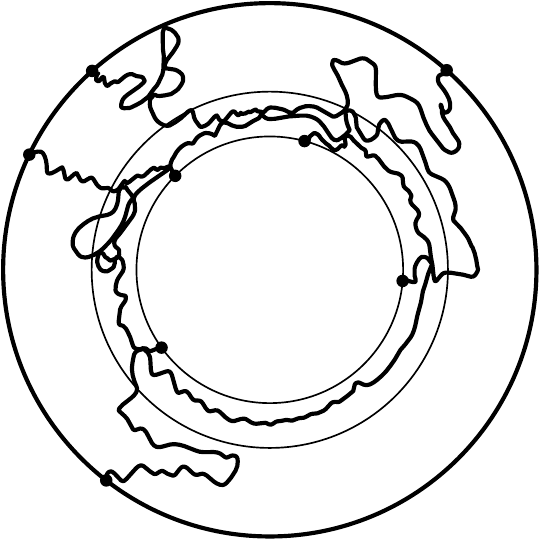}
\end {center}
\caption{\label{sketchAppB3} The event $G_3$.}\end{figure}

\item We again suppose that we start with a $\delta_1$-separated  4-tuple of starting points $\ul z$. We explore the four strands again up to their respective hitting time of the circle of radius $1/2$. 
We call $G_3$ the event that  the four points ${\ul z} (1/2, {\ul \gamma})$ are $\delta_1$-separated and that 
the set $D_{1/2} ({\ul \gamma})$ is a subset of the disk of radius $2/3$ around the origin. In other words, this second condition means that 
the union of the four strands up to their hitting time of the circle of radius $1/2$ do disconnect 
the origin from the circle of radius $2/3$. See Figure \ref {sketchAppB3} for a sketch. 
We now claim that the probability of $G_3$ is bounded from below by some universal constant $c_3$, independently of the choice of $\delta$-separated quadruple of starting points $\ul z$. 
When $G_3$ holds, we then call $e_3 (\ul \gamma) = \ul \gamma^{1/2}$. 
\end {itemize}

The proofs of these three claims (that provide the existence of $c_1$, $c_2$ and $c_3$) use the resampling property that we described before and some elementary distortion estimates. We will indicate in Appendix \ref {AppB} a roadmap 
to derive the statement for the events $G_1$ and $G_3$, and we leave the proofs of the claim for $G_2$ to the interested reader.

From now on, we choose $\delta_0$ small enough such that Lemma~\ref{lem:first_moment} holds and such that the first item (making it possible to define $c_1$) holds. We will just write $F_n$ instead of $F_{n, \delta_0}$. 

Suppose that $\ul z \in \CT_{\delta_1}$. 
We now say that $\ul \gamma$ satisfies the event $E_{n}'$ if the following four events holds: 
(i) The event $G_1$ holds, (ii) the configuration $e_1 ( \ul \gamma)$ satisfies $F_n$, (iii) The configuration $\ul f_n ( \ul e_1 (\ul \gamma)) $ satisfies $G_3$, and (iv) 
the configuration $ \ul e_3 (\ul f_n ( \ul e_1 ( \gamma))) $ satisfies $G_2$.
 We then call $\ul e ( \ul \gamma)$ the obtained configuration i.e, 
$$\ul e ( \ul \gamma ) := [\ul e_2 \circ  \ul e_3 \circ   \ul f_n  \circ \ul e_1]  ({\ul \gamma}).$$ 
Note that when the event $E_n'$ holds,  the starting points of $\ul e ( \ul \gamma)$ are in $\CT_{\delta_1} \subset \CT_{\delta_0}$, so that it will be possible to iterate such events $E_n'$. 

The previous estimates show that there exists constants $c_0, \ldots, c_3$ such that for infinitely many $n$ and 
for any $\ul z \in \CT_{\delta_1}$, 
$$\p_{\ul z} [ E_n' ] \ge c_0 c_1 c_2 c_3 \times (\eps_n)^{\alpha_0}.$$ 
This shows in particular that there exist infinitely many values of $n$ so that for all $\ul z \in \CT_{\delta_1}$, 
\begin{equation}
\label{eqn:e_n_pp_lbd}	
\p_{\ul z} [E_n'] >  (10^4 \eps_n)^{(\alpha_0 +2 )/2}.
\end{equation}
We now choose a value of $N \ge 10$ so that~\eqref{eqn:e_n_pp_lbd} holds. We will keep this $N$ fixed until the end of this section. 

Finally, using some additional randomness in order to discard part of the event $E_N'$, we can define for each $\ul z \in \CT_{\delta_1}$ an event $E_N'' \subset E_N'$ so that 
the value of $\p_{\ul z} [ E_N'']$ does not depend on $\ul z$ (for $\delta_1$-separated $\ul z$) and such that \eqref{eqn:e_n_pp_lbd} still holds. 

We then finally define  $b$, $\beta_0$ and $\beta$ so that 
\[ b:= \p_{\ul z} [E_N''] = (\eps_N)^{\beta_0} = (100 \eps_N)^\beta \]
and note that $0 < \beta_0 < \beta  <  2$.

On the event $E_{N}''$, we define the domain $D_{N}''$ to be the  connected component containing the origin of the complement in the unit disk of the four strands up to the respective times at which one sees that $E_N''$ is satisfied.  Note that the conformal radius (from the origin) of $D_N''$ is in the interval $[4\eps_{N}, \eps_N/4]$ (this follows from multiplicativity of the conformal radii, Koebe's $1/4$ Theorem and from the definitions of  $E_N'$ and $G_1, \ldots, G_3$).

\subsection{The good pivotal regions  and their number} 

We are now going to define the iterated events $E^k$ for $k \ge 1$. The event  $E^1$ is just the event $E_{N}''$ with $N$ chosen as before.  Then, we define iteratively 
for each $k \ge 2$ the event $E^k$ to be the event that $E^1$ holds and 
that $e({\ul \gamma})$ satisfies the event $E^{k-1}$. As on the event $E^1=E_{N}''$, the configuration of the images of the end-points is in $\CT_{\delta_1}$, it follows
immediately that for all ${\ul z} \in \CT_{\delta_1}$ and all $k \ge 1$, 
we have that
\[ \p_{\ul z} [ E^k ]  = b^k . \]

Let us now make some comments of the shape of the connected component $D^k$ containing the origin of the complement of the four strands up to the stopping times corresponding to the event $E^k$ (for instance, $D^1= D_N''$). Let us denote by $\rho_k$ the conformal radius of $D^k$ as viewed from the origin. It follows from our definitions of the event $E_N''$ together with Koebe's $1/4$ Theorem  that: 
\begin {itemize}
 \item For all $k$,  $\eps_N / 4 \le \rho_{k+1} / \rho_k \le 4 \eps_N$. This implies in particular that 
 $\rho_k \le (4 \eps_N)^k$.
 \item The boundary of $D^k$ is included in the annulus between the circles of radii $\rho_k / 4$ (this is just Koebe's $1/4$ Theorem) and $10 \rho_k$ around the origin (this last fact follows readily
 from the disconnection event in the definition of $G_3$). 
\end {itemize}

Suppose now that $u$ is a point in the unit disk. We use again the M\"obius transformation $\psi_u \colon \D \to \D$ with $\psi_u (u) =0$ and $\psi_u'(u) > 0$.  For a given configuration defined under~$\p_{\ul z}$, we say that the event $E^{k} (u)$ holds if the image of the configuration under $\varpsi_u$ satisfies $E^k$. 

We define $\CT' =\CT_{\delta_1 /2 }$. We can note that one can then find $r_0$, so that for all $u$ with $|u| < r_0$, $\psi_u ( {\ul z}) \in \CT_{\delta_1}$ as soon as ${\ul z} \in \CT'$.  Hence, for $|u| < r_0$ and all ${\ul z} \in \CT'$,
$\p_{{\ul z}} [ E^k (u) ] = \p_{\psi_u(  {\ul z}) } [E^k]  = b^k$.

Our next goal is now to derive the following second moment bound:
\begin{lemma}
\label{lem:perfect_two_points}
There exists a constant $C' >0$ so that for all $\ul{z} \in \CT'$, for all $k$ and all $u,v \in B(0,r_0)$ with $|u-v| \ge 2^{-Nk}$, we have
\[ \p_{\ul{z}}[E^{k} (u) \cap E^{k} (v)] \leq \frac { C' \times b^{2k} } { |u-v|^{\beta}}. \] 
\end{lemma}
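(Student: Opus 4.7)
The plan is a standard multi-scale second moment argument. I identify a ``coalescence level'' $j^{*}=j^{*}(u,v)$ at which the nested four-arm structures around $u$ and around $v$ first become spatially disjoint, bound the coarse scales by the one-point estimate $\p_{\ul z}[E^{j^{*}}(u)]=b^{j^{*}}$, and use the conformal Markov property of the $\SLE_{\kappa}(\kappa-6)$ branching tree (Section~2.2) to force conditional independence of the remaining iterations in the two disjoint normalized domains.

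Concretely, let $j^{*}$ be the minimum of $k$ and the smallest integer with $40(4\eps_N)^{j^{*}}\leq|u-v|$. The second bulleted property of $D^{m}$ recalled just before the lemma gives $\mathrm{diam}(D^{m})\leq 20\rho_{m}\leq 20(4\eps_N)^{m}$, so on $E^{j^{*}}(u)\cap E^{j^{*}}(v)$ the two normalized level-$j^{*}$ domains $D^{j^{*}}(u)$ and $D^{j^{*}}(v)$ are disjoint subsets of $\D$. Minimality of $j^{*}$ gives $(4\eps_N)^{j^{*}}>\eps_N|u-v|/10$, whence
\[
b^{j^{*}}=(100\eps_N)^{j^{*}\beta}\;\geq\;(4\eps_N)^{j^{*}\beta}\;>\;(\eps_N/10)^{\beta}\,|u-v|^{\beta},
\]
so that $b^{-j^{*}}\leq C''|u-v|^{-\beta}$ for a constant $C''=C''(\kappa)$. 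The hypothesis $|u-v|\geq 2^{-Nk}$ is used only to guarantee that the truncation at $k$ is harmless, namely that the $j^{*}=k$ case can be handled by the trivial bound $b^{k}\leq b^{2k}/b^{k}\leq C''b^{2k}/|u-v|^{\beta}$.

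Now write $E^{k}(w)=E^{j^{*}}(w)\cap F(w)$, where $F(w)$ denotes the event that the strands remaining inside $D^{j^{*}}(w)$, after conformal normalization, satisfy $E^{k-j^{*}}$. The coarse factor is bounded trivially by $\p_{\ul z}[E^{j^{*}}(u)\cap E^{j^{*}}(v)]\leq\p_{\ul z}[E^{j^{*}}(u)]=b^{j^{*}}$. Conditionally on the exploration data that realises $E^{j^{*}}(u)\cap E^{j^{*}}(v)$, the further evolutions of the four strands (and of their attached $\CLE_{\kappa}$ loops) inside the two disjoint domains are conditionally independent by the $\CLE_{\kappa}$ domain Markov property; moreover, the hook $G_{1}$ built into the definition of $E^{1}$ ensures that the four normalized boundary endpoints on each $D^{j^{*}}(w)$ are again $\ul\nu$-distributed in $\CT$, so that the conditional probability of each $F(w)$ is exactly $b^{k-j^{*}}$. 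Combining,
\[
\p_{\ul z}[E^{k}(u)\cap E^{k}(v)]\;\leq\;b^{j^{*}}\cdot b^{2(k-j^{*})}\;=\;b^{2k}/b^{j^{*}}\;\leq\;C''\, b^{2k}/|u-v|^{\beta}.
\]

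The main obstacle is justifying the two ingredients of the conditional independence step: that the unexplored four-strand/$\CLE_{\kappa}$ configurations in $D^{j^{*}}(u)$ and $D^{j^{*}}(v)$ are conditionally independent, and that each carries the same reference law $\p_{\ul\nu}$ modulo conformal normalization. Independence across disjoint complementary components is the $\CLE_{\kappa}$ domain Markov property applied to the two different complementary components of the explored set. The identical-distribution statement is the whole reason the hook $G_{1}$ was introduced into $E^{1}$: it rerandomizes the four normalized boundary endpoints to be uniform on the four prescribed arcs (equivalently, $\ul\nu$-distributed in $\CT$), so that each iteration of $E^{1}$ is genuinely a product of $b$-probability events with the reference law, and in particular the conditional probability of $F(w)$ is $b^{k-j^{*}}$ regardless of the coarse conditioning.
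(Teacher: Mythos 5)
Your coarse-scale bookkeeping is fine and essentially mirrors the paper's use of the random index $K(u,v)$: choosing $j^*$ with $(4\eps_N)^{j^*}\asymp |u-v|$ (up to constants, and up to the M\"obius distortion of $\psi_u,\psi_v$ which you ignore but which only affects constants), bounding the coarse factor by the one-point estimate, and noting $b^{-j^*}\leq C''|u-v|^{-\beta}$ gives the right arithmetic. The gap is the factorization step. The events $E^{k}(u)$ and $E^{k}(v)$ are built from the \emph{same} four strands, and on the joint event the deep structures near $u$ and near $v$ are traced by interleaved portions of these strands; they are not configurations in two complementary components of one explored set at one family of stopping times. So the domain Markov property does not deliver what you assert: there is no natural $\sigma$-algebra ``realising $E^{j^*}(u)\cap E^{j^*}(v)$'' given which $F(u)$ and $F(v)$ are conditionally independent with conditional probability $b^{k-j^*}$ each. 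Concretely, once the strands have been explored up to the stopping times realising the \emph{later} of the two level-$j^*$ events (say the one at $u$), the not-yet-discovered pieces are confined to $D^{j^*}(u)$, so the entire geometric picture near $v$ is already contained in the explored data; whether $E^{k}(v)$ holds is then determined \emph{except} through the hook-up pattern still to be decided inside $D^{j^*}(u)$ (which explored pieces near $v$ belong to which strand depends on how the strands connect near $u$). Thus for one of the two points the conditional probability of the deep event is $0$ or $1$, not $b^{k-j^*}$; and for the other point, conditioning on the event at the first point destroys the exact rerandomization that $G_1$ provides along the one-point exploration filtration, so ``exactly $b^{k-j^*}$'' is unjustified there as well.

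This entanglement is precisely what the paper's proof is organized around: it orders the two points via $K(u,v)\le K(v,u)$, explores the strands up to the realization of $E^{k_0+10}(u)$ with $k_0=K(u,v)$, observes that at this stage $E^{k}(v)$ is determined \emph{modulo the hook-up near $u$}, bounds the conditional probability of the remaining deep event at $u$ by a constant times $b^{k-k_0-10}$ \emph{uniformly} over the hook-up scenarios, and then sums over $k_0$ against $\p[E^{k}(v)\cap\{K(u,v)=k_0\}]\le\p[E^{k}(v)]=b^{k}$. In other words, the two factors $b^{k}$ and $b^{k-k_0}$ are obtained as a one-sided uniform conditional estimate at one point combined with the full one-point probability at the other, never as a product of two conditionally independent tails. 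To repair your argument you would need to replace the claimed product formula by an estimate of this one-sided type; as written, the central step of the lemma is asserted rather than proved.
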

\begin{proof}
Let us define $D^k (u)$ and $D^k (v)$ just as before, except that they correspond to the domain around $u$ and $v$ respectively (so that $\psi_u (D^k (u))$ has the same law as $D^k$, for instance). Let $K(u,v)$ denote the smallest $k$ such that $D^k (u)$ does not contain the disk of radius $16 (\eps_N)^{-1} |u-v|$ around $u$, and $K (v,u)$ similarly (interchanging $u$ and $v$). By symmetry, it is sufficient to bound 
the probability of the event $E^{k} (u) \cap E^{k} (v) \cap \{ K(u,v) \le K (v,u) \}$. We are going to decompose this according to the value of $K(u,v)$.

Note that by our previous bounds on the conformal radius of $D^k$, 
\[ | u-v | /( 4 \eps_N ) \le \rho_{K (u,v)} \le (4 \eps_N)^{K(u,v)} \]
so that $(4 \eps_N)^{K(u,v)} \ge | u-v | $. We can therefore restrict ourselves to the values $k_0$ taken by $K(u,v)$, so that 
\[ b^{k_0} = (100 \eps_N)^{k_0 \beta} \ge |u-v|^{\beta}.\] 

Suppose that $k_0 = K(u,v) \le K(v,u)$ and  
let us consider the four strands $\gamma_1, \ldots, \gamma_4$ up to the time at which the event $E^{k_0+10} (u)$ is realized. Observing these four strands, we are only missing the 
pieces in $D^{k_0 +10} (u)$, so that can already see what happened near $v$. In particular, we can see -- modulo whether the paths $\gamma_1, \ldots, \gamma_4$ hook up in the right way near $u$ -- 
if $E^{k} (v)$ can hold or not. Furthermore, the conditional probability of the four paths making it so that $E^k (u)$ holds is bounded by a constant times $b^{k - (k_0 +10)}$. 
From this, we can deduce  that 
\begin {align*}
&\p_{\ul z} [ E^{k} (u) \cap E^{k} (v) \cap \{ k_0 =  K(u,v) \le K (v,u) \} ] \\
&\leq  b^{k-k_0 - 10}  \p [  E^{k} (v) \cap \{ k_0 =  K(u,v) \le K (v,u) \}] \\
&\leq b^{-10}  b^k  |u-v|^{- \beta}    \p [  E^{k} (v) \cap \{ k_0 =  K(u,v)\}]. 
\end {align*}
Summing over all possible values of $k_0$, and using the symmetry in $u$ and $v$, we finally get that 
\[ \p[ E^{k} (u) \cap E^{k} (v)] \leq 2b^{-10}  b^{2k} |u-v|^{-\beta}.\] 
\end{proof}

Let $\CN_k = B(0,r_0) \cap (2^{-kN} \Z^2)$.  Let us now define the number $N_k$ of points in $\CN_k$ such that $E_k (u)$ holds.  Our previous moment bounds imply some control on the law of $N_k$ as $k \to \infty$. Recall that~$\beta_0$ is the value chosen so that $b= 2^{-N\beta_0}$.

\begin{lemma}
\label{lem:perfect_pivotals_tight}
 There exist a constant $a >0$ such that for all $\ul z \in \CT'$ and all $k \ge 1$, we have that
\[ \p_{\ul{z}}[a \leq N_k  /  2^{kN (2 - \beta_0)} \leq 1/a  ] \geq a.\]
\end{lemma}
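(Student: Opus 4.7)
The plan is to prove Lemma~\ref{lem:perfect_pivotals_tight} by a standard second-moment (Paley--Zygmund) argument, with inputs the uniform one-point probability $\p_{\ul z}[E^k(u)] = b^k$ for $u \in B(0,r_0)$ and $\ul z \in \CT'$ already established just before Lemma~\ref{lem:perfect_two_points}, together with the two-point bound from that lemma. Since $\p_{\ul z}[E^k(u)]$ is constant over $B(0,r_0) \supset \CN_k$, summing over the $\asymp 2^{2kN}$ lattice points in $\CN_k$ gives the first moment
$$\E_{\ul z}[N_k] \;=\; |\CN_k|\cdot b^k \;\asymp\; 2^{kN(2-\beta_0)},$$
with constants uniform in $\ul z \in \CT'$.

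Next I would bound the second moment. The diagonal of $\E_{\ul z}[N_k^2]$ contributes $\E_{\ul z}[N_k]$. For the off-diagonal part, Lemma~\ref{lem:perfect_two_points} applies because distinct points of $\CN_k \subset 2^{-kN}\Z^2$ are automatically at distance at least $2^{-kN}$, and it gives
$$\sum_{u\ne v\in\CN_k}\p_{\ul z}[E^k(u)\cap E^k(v)] \;\le\; C'\,b^{2k}\sum_{u\ne v\in \CN_k}|u-v|^{-\beta}.$$
A standard dyadic-shell estimate on the lattice $2^{-kN}\Z^2 \cap B(0,r_0)$, using crucially that $\beta<2$, shows that for each fixed $u$ one has $\sum_{v\ne u}|u-v|^{-\beta} \lesssim 2^{2kN}$, and hence $\sum_{u\ne v}|u-v|^{-\beta}\lesssim 2^{4kN}$. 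Combining with $b^{2k}=2^{-2kN\beta_0}$ yields
$$\E_{\ul z}[N_k^2] \;\le\; \E_{\ul z}[N_k]+ C''\cdot 2^{2kN(2-\beta_0)} \;\le\; C''' (\E_{\ul z}[N_k])^2,$$
with constants uniform in $\ul z \in \CT'$ and $k$.

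Finally I would combine two classical inequalities. Paley--Zygmund gives the uniform lower bound $\p_{\ul z}[N_k \ge \tfrac12\E_{\ul z}[N_k]] \ge \tfrac14(\E_{\ul z}[N_k])^2/\E_{\ul z}[N_k^2] \ge c_1>0$, while Markov's inequality with $\Lambda := 2/c_1$ gives $\p_{\ul z}[N_k \le \Lambda\E_{\ul z}[N_k]]\ge 1-c_1/2$. Intersecting these two events, and using $\E_{\ul z}[N_k]\asymp 2^{kN(2-\beta_0)}$ uniformly, produces the claimed two-sided bound with a uniform positive probability. The only genuine obstacle in this plan is the second-moment estimate: one needs $\beta<2$ so that far-away pairs do not dominate the off-diagonal sum and one obtains $\E_{\ul z}[N_k^2] = O((\E_{\ul z}[N_k])^2)$. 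This is exactly the subcriticality feature that makes the multi-scale fractal second-moment method go through, and it is what the factor $|u-v|^{-\beta}$ with $\beta<2$ in Lemma~\ref{lem:perfect_two_points} was arranged to provide.
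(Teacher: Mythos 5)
Your proposal is correct and is essentially the paper's own argument: the same first-moment computation from $\p_{\ul z}[E^k(u)]=b^k$, the same second-moment bound splitting diagonal and off-diagonal terms and using Lemma~\ref{lem:perfect_two_points} together with $\beta<2$ to get $\E_{\ul z}[N_k^2]=O((\E_{\ul z}[N_k])^2)$ uniformly in $\ul z\in\CT'$ and $k$. The only difference is that you spell out the final ``classical'' step (Paley--Zygmund plus Markov) which the paper leaves implicit, and you correctly note that distinct points of $\CN_k$ are at distance at least $2^{-kN}$, so Lemma~\ref{lem:perfect_two_points} applies to every off-diagonal pair.
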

\begin{proof}Let $X=X_k$ denote the random variable $N_k / 2^{kN (2- \beta_0)}$. As $\p_{\ul z} [E^k (u)] = b^k$, we have that
\[ \E_{\ul z} [ X_k ] = 2^{-2kN} 2^{\beta_0 kN} \sum_{u \in  \CN_k } \p_{\ul z} [ E^k (u) ] = 2^{-2kN} \# \CN_k \]
which is bounded from above and from below by positive constants that are independent of $k$ and of ${\ul z} \in \CT'$. 

On the other hand, 
\[ \E_{\ul z}[( X_k )^2] = (2^{-2kN} 2^{\beta_0 kN})^2  \sum_{u, v \in \CN_k} \p_{\ul z} [ E^k (u) \cap E^k (v) ].\]
The sum when $u=v$ is again easily taken care of by the fact that $\p_{\ul z} [E^k (u)] = b^k$, and 
Lemma~\ref{lem:perfect_two_points} takes care of all the terms in the sum for $u \not= v$; we get that for some constant $C$, for all $\ul {z} \in \CT'$ and all $k$,
\[ \E_{\ul z} [ (X_k)^2 ] \le C + C   (2^{-2kN})^2 \sum_{v \not= u  \in \CN_k} |u-v|^{-\beta}\]
which is easily shown to be bounded by some explicit constant independent of $k$ because $\beta \in (0,2)$. 

This information on the first and second moments of $X_k$ then classically imply the lemma.
\end{proof}

\subsection{Rerandomizing configurations in pivotal regions and conclusion of the proof}
\label{subsec:randomizing}

We now complete the proof of Theorem~\ref{thm:cle_not_determined}.  We begin by establishing the following intermediate result.

\begin{lemma}
\label{lem:randomize_pivotal_points}
Let $\gamma$ be the branch of the $\CLE_{\kappa}$ exploration tree from $-i$ to $i$ in $\D$.  The probability that the conditional law of $\gamma$ given the $\CLE_\kappa$ gasket is not supported on a single path is strictly positive.
\end{lemma}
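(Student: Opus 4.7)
The plan is to apply the pivotal point construction of Sections~\ref{subsec:paths_together} and~\ref{subsec:randomizing} to exhibit a positive-probability event on which one can locally rewire the $\CLE_\kappa$ configuration without changing the gasket but while altering the branch $\gamma$.

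First I would set up the joint exploration from the two endpoints exactly as in Section~\ref{subsec:hookup4_to_8}: let $\gamma = \eta_1$ be the branch of the $\CLE_\kappa$ exploration tree from $-i$ to $i$, and let $\eta_2$ be its time-reversal. Growing both branches together with the partial traces of the loops they are currently on, up to short stopping times, produces four strands $\gamma_1, \gamma_2, \gamma_3, \gamma_4$. Using the absolute continuity and resampling properties recalled at the end of Section~\ref{subsec:notation}, with positive probability the resulting four endpoints lie (after a conformal mapping back to $\D$) in the good set $\CT'$, so that the subsequent evolution of the strands has law $\p_{\ul z}$ for some $\ul z \in \CT'$.

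Next, for a fixed large $k$, I would invoke Lemma~\ref{lem:perfect_pivotals_tight}: with probability at least $a > 0$, the number of pivotal points $N_k$ in $\CN_k \subset B(0, r_0)$ at which $E^k(u)$ holds is at least $a \cdot 2^{kN(2 - \beta_0)} \geq 1$, so such a pivotal point $u$ exists. Let $D^k(u)$ denote the innermost domain around $u$ produced by the event $E^k(u)$. By the conformal invariance of Lemma~\ref{lem:paths_confinv}, conditional on the exploration outside $D^k(u)$, the conditional law of the remaining strand segments inside $D^k(u)$ is a conformal image of $\p_{\ul w}$ for some $\ul w \in \CT$. Since cross-ratios of quadruples in $\CT$ are uniformly bounded, Lemmas~\ref{lem:paths_hookup} and~\ref{lem:paths_hookup2} guarantee that both the one-loop event $E_1$ (where the strands near $u$ belong to a single $\CLE_\kappa$ loop and $u$ is a double point of it) and the two-loop event $E_2$ (where they belong to two distinct $\CLE_\kappa$ loops touching at $u$) have conditional probability at least $p_0 > 0$.

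The decisive geometric observation, depicted in Figure~\ref{fig:merging_loops}, is that the two scenarios produce identical $\CLE_\kappa$ gaskets: the union of the $\CLE_\kappa$ loops, viewed as a subset of $\D$, is the same in either case, and hence so is the set of points surrounded by no loop. On the other hand, the two scenarios yield genuinely different branches $\gamma$: in the one-loop case, $\gamma$ traces a single $\CLE_\kappa$ loop through $u$ as a double point, whereas in the two-loop case it traces two separate loops meeting at $u$ and visited at different times. Thus on an event of probability at least $a p_0 > 0$, the conditional law of $\gamma$ given the $\CLE_\kappa$ gasket charges at least two distinct paths, which is the statement of the lemma. The main obstacle I anticipate is ensuring rigorously that the local rewiring is a genuine Markov step for the $\CLE_\kappa$ law: this requires using the conformal invariance of Lemma~\ref{lem:paths_confinv} together with the bi-chordal $\SLE$ resampling characterization invoked in its proof, in order to construct the two configurations (one-loop and two-loop) on a common probability space, sharing the same gasket but with different $\gamma$.
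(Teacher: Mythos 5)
Your overall strategy (exploit the pivotal points of Lemma~\ref{lem:perfect_pivotals_tight} and the two hookup scenarios of comparable conditional probability) is the same as the paper's, but two steps on which your argument actually rests are not correct as stated. First, the ``decisive geometric observation'' that the one-loop and two-loop scenarios ``produce identical $\CLE_\kappa$ gaskets'' is false at any finite scale $k$: conditionally on the exploration outside $D^k(u)$, the missing strand segments inside $D^k(u)$ are resampled, and the two hookup scenarios correspond to bi-chordal $\SLE_\kappa$ configurations joining \emph{different} pairs of the four marked points; their traces inside $D^k(u)$ cannot be coupled to coincide, so the gasket does change inside $D^k(u)$ (and a priori the gasket there could even reveal the hookup). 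Figure~\ref{fig:merging_loops} is only a cartoon of the idea. The paper deals with this by showing that the rewiring moves the gasket by at most $(8\eps_N)^k$ in Hausdorff distance while changing the hookup macroscopically, and then sends $k \to \infty$, extracting a (subsequential) asymptotic coupling of two configurations with the \emph{same law}, equal gaskets in the limit, but different hookups; this limiting coupling is what legitimately yields a conditional law given the gasket that charges two distinct paths. Your deduction ``on an event of probability $a p_0$ the conditional law of $\gamma$ given the gasket charges two paths'' skips exactly this step: having both hookups possible given the exploration data outside $D^k(u)$ is not the same as having them possible given the gasket.

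Second, your rewiring is not set up as a measure-preserving operation. You propose to take a configuration on which $N_k \ge 1$, select a pivotal point $u$ with $E^k(u)$, and resample inside $D^k(u)$; selecting an exceptional point as a function of the configuration biases the law, so the rewired configuration need not be a $\CLE_\kappa$ any more. The paper instead runs the Markov chain which picks a \emph{uniform} point of $\CN_k$, checks whether $E^k(u)$ holds, and only then resamples; this manifestly preserves the joint law of $(\Gamma,\ul\gamma)$. One then needs the second-moment input (the conditional bound extracted from the proof of Lemma~\ref{lem:perfect_two_points}) to show that over $\lfloor 2^{kN(2-\beta_0)} \rfloor$ steps, with probability bounded below uniformly in $k$, \emph{exactly one} switch occurs -- both before and after the switch -- so that one obtains a genuine coupling of two samples of the correct law differing by a single local rewiring. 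Without this uniform-in-$k$ control and the subsequent limit, the argument does not close; with it, your outline becomes essentially the paper's proof.
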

\begin{proof}
To start, we will first explain the reduction from the setting of a $\CLE_\kappa$ to the setting of the law $\p_{\ul{o}}$ as defined earlier.  Let $\wt{\gamma}$ be the time-reversal of $\gamma$.  Let $\tau$ be the first time that $\gamma$ hits $\partial B(-i,1/4)$.  For each $t \geq 0$, let $\D_t$ be the component of $\D \setminus (\gamma([0,\tau]) \cup \wt{\gamma}([0,t]))$ with $\gamma(\tau)$ and $\wt{\gamma}(t)$ on its boundary.  Let $\wt{\tau}$ be the first time $t \geq 0$ that the cross-ratio of the four marked points corresponding to $\gamma(\tau)$, $\wt{\gamma}(t)$, and the most recent times before $\tau$ and $t$, respectively, that $\gamma$ and $\wt{\gamma}$ have hit the counterclockwise segment of $\partial \D$ from $-i$ to $i$, is equal to $1$.  We take $\wt{\tau} = \infty$ if no such times exist.  On the event that $\wt{\tau} <\infty$, the conditional law of the remaining loop ensemble can be described in terms of (a conformal image of) $\p_{\ul{o}}$ so to complete the reduction we need to explain 
why $\p[\wt{\tau} <\infty] > 0$.  This in fact follows since for any deterministic, continuous, simple path $\gamma_0$ in $\ol{\D}$ connecting $-i$ to $i$ and $\epsilon > 0$, there is a positive chance that $\gamma$ stays within distance $\epsilon$ of $\gamma_0$ (viewed as paths modulo parameterization).  Indeed, to be concrete, we can make the particular choice where $\gamma_0$ consists of the concatenation of the line segments $[-i,0]$, $[0,-1]$, $[-1,1-\delta + i\delta]$, $[1-\delta+i\delta,i]$ where $\delta > 0$ is chosen to be very small and we also choose $\epsilon > 0$ very small.

Let $\ul{\gamma}$ be the four strands defined under the law $\p_{\ul{o}}$.  We also let $\Gamma$ be the corresponding loop ensemble and $\Upsilon$ its gasket. We will show that, with positive probability, the conditional probability that $\gamma_1$ terminates at $z_4$ given $\Upsilon$ is in $(0,1)$.  

For each given $k$, consider the Markov chain on $(\Gamma,\ul{\gamma})$ configurations defined as follows:
\begin{itemize}
\item Pick a point $u \in \CN_k$ uniformly at random,
\item Check whether the event $E^{k}(u)$ occurs, and
\item If so, resample the terminal segments of the paths in~$D^k (u)$ and the rest of the $\CLE_\kappa$ in~$D^k(u)$.
\end{itemize}
Note that this chain preserves the joint law of $(\Gamma,\ul{\gamma})$.

We now want to use Lemma~\ref{lem:perfect_pivotals_tight} to see that if we run the chain for $\lfloor 2^{kN (2 - \beta_0)} \rfloor$ steps there is a positive chance (bounded from below uniformly w.r.t.\ $k$) that there exists exactly one single time during these steps at which the chain discovers a point $u \in \CN_k$ where $E^{k} (u)$ occurs and switches the connections near this point.

To see this, we first notice that if we sample $ \lfloor 2^{kN (2 - \beta_0)} \rfloor $ times a uniformly chosen point in $\CN_k$ for a given configuration $\Gamma$, then with a probability bounded uniformly from below (say, larger than some~$a_0$), one hits exactly one point $u$ for which $E^k (u)$ holds and switches it. Then, we need to argue that once this point has been switched and we get a new configuration $\wt \Gamma$, with positive probability, if we sample {the remaining uniformly chosen points in $\CN_k$ (so that we have sampled a total of $2^{kN (2 - \beta_0)}$ such points)}, then we do not find a point~$v$ for which the event $E^k (v)$ occurs for the new configuration.

To justify this, we note that the proof of Lemma~\ref{lem:perfect_two_points} (in particular, the fact that we derived an upper bound on the conditional probability of $E^k(v)$ occurring given $E^k (u)$, regardless of how the paths hook up near $u$), we get that for every $u \in {\mathcal N}_k$, conditionally on the event that $u$ has been picked among the $2^{kN ( 2 - \beta_0)}$ times and on the fact that $E^k (u)$ did hold at that time, the mean number of points $v \in {\mathcal N}_k$ such that $E^k(v)$ holds either before or after the switch near $u$ is bounded by a constant times $b^k \times \# {\mathcal N}_k$. In particular, the conditional probability that this number of points is greater than some explicit large but fixed constant times $b^k \times \# {\mathcal N}_k$ is as small as we want (provided the constant is chosen large enough). And if this number of points is smaller than this constant times $b^k \times \# {\mathcal N}_k$, then the conditional probability that no further changes are made to 
the configuration during the remaining switching attempts is bounded uniformly from below.

Then, if $\wt{\Gamma}$ denotes the resulting loop ensemble and  $\wt{\Upsilon}$ its gasket, the Hausdorff distance between $\Upsilon$ and $\wt{\Upsilon}$ is at most $(8 \eps_N)^k$, while $\wt {\Gamma}$ has changed more dramatically (now $\gamma_1$ hooks up with the other strand).  Therefore sending $k \to \infty$ (and possibly passing to an appropriate subsequence), we get an asymptotic coupling which satisfies the desired property.
\end{proof}

We  now conclude the proof of Theorem~\ref{thm:cle_not_determined} via a zero-one law type argument. 

\begin{proof}[Proof of Theorem~\ref{thm:cle_not_determined}]
Let $\eta$ be the branch of the $\CLE_{\kappa}$ exploration tree from $-i$ to $i$ in the unit disk.  By Lemma~\ref{lem:randomize_pivotal_points}, we know that $\eta$ is not determined by the gasket $\Upsilon$ of $\Gamma$ with probability at least $p \in (0,1]$. We can parameterize $\eta$ as seen from $i$. Since $\eta(t)$ converges almost surely to $i$ as $t \to \infty$, it is clear that for some given large $t_0$, the probability $p(t_0)$ that $\eta$ up to time~$t_0$ is not determined by~$\Upsilon$ is strictly positive. By scaling and conformal invariance, this probability~$p(t_0)$ is independent of~$t_0$. 

For a given fixed~$t_0$, as explained in the $\CLE_\kappa$ description, it is possible to discover simultaneously~$\eta$ up to time~$t_0$ and the $\CLE_\kappa$ loops it traces. In particular, we can trace~$\eta$ up to the first time $s_0$ after $t_0$ at which it will touch the semi-circle from $-i$ to $i$ again, 
and leave the loop that it was tracing at time~$t_0$ in order to branch towards~$i$. At that time $s_0$, the conditional law of the CLE in the remaining to be explored domain with~$i$ on its boundary is just the law of a CLE in this domain. It can in particular be resampled without affecting~$\eta$ up to time~$t_0$.
Hence, after that time $s_0$, the conditional probability that future of~$\eta$ is not determined by the gasket is still~$p$, independently of~$\eta$ up to time~$t_0$.  Hence, we get that $1- p \le (1-p) (1- p (t_0))$.  As $p > 0$, we conclude that $p  =1$.    
\end{proof}

Note that this argument in fact can be adapted to see that the conditional law of $\eta$ given $\Upsilon$ is almost surely non-atomic. 
Indeed, the previous result shows that for some $\lambda <1$, there is a positive probability $a$ that 
the conditional law of $\eta$ up to time $t_0$ given the $\Upsilon$ has no atom of mass greater than $\lambda$. 
Using the conditional independence after $s_0$, if we define $Q(x)$ to be the probability that the conditional law of $\eta$ given $\Upsilon$ has an atom of mass at least $x$, 
we get readily that for all $x \le 1$, 
$$ Q (\lambda x) \le (1-a) Q( \lambda x) + a (Q(x)),$$ which (together with the fact that $Q(1) =0$) implies that $Q(x)=0$ for all positive $x$. 

\section {Derivation of Theorem~\ref{thm:path_not_determined} and of Theorem~\ref{thm:percolation_not_determined}}

We now explain how to adapt the previous ideas in order to derive the other two theorems stated in the 
introduction.

\subsection{Randomness of continuum percolation interfaces}
\label{subsec:percolation_not_determined}

We first give the proof of Theorem~\ref{thm:percolation_not_determined}.

\begin{proof}[Proof of Theorem~\ref{thm:percolation_not_determined}]
As mentioned in the introduction, our proof for CPIs in the case where $p=1$ (i.e.\ where all of the CLE loops have the same label) can in fact be easily adapted to the more general setting of labeled CLE carpets.  So, for simplicity, we will only include the argument in the case that $p=1$.

Fix $\kappa \in (8/3,4)$ and let $\kappa' = 16/\kappa \in (4,6)$.  We suppose that we have a coupling of a~$\CLE_{\kappa'}$ process~$\Gamma'$ and a~$\CLE_\kappa$ process~$\Gamma$ as described at the end of Section~\ref{subsec:percolation_review}.  Just as in Section~\ref{subsec:hookup4_to_8}, we then explore part of the branch $\eta'$ of the $\CLE_{\kappa'}$ exploration tree from $-i$ to $i$ up to a stopping time $t$, and then, starting from its most recent intersection with $\partial \D$ in the counterclockwise direction starting from $-i$, we start drawing the currently explored loop backwards up to some stopping time, say $\wt{t}$.  Call this curve $\wt{\eta}'$.  We then condition on all of the $\CLE_\kappa$ loops which intersect the part of the $\CLE_{\kappa'}$ exploration tree that we have observed so far.  See the left side of Figure~\ref{fig:percolation_randomness} for an illustration.  By \cite[Theorem~7.3]{cle_percolations}, we know in particular that the conditional law of the~$\CLE_{\kappa'}$ 
exploration tree in the connected component of the remaining domain $D_0$ which has~$i$ on its boundary, is given by that of an independent $\CLE_{\kappa'}$ exploration tree in the remaining domain.  (In the FK-Potts analogy in a domain~$D$, one considers a monochromatic with color ``$A$'' boundary condition for Potts that one can view as a wired boundary condition for the coupled FK model, and one explores the inner boundaries that touch $\partial D$ of the open FK-cluster up to some time -- this corresponds to the exploration of the $\CLE_{\kappa'}$ tree. Then, one attaches to its right all the  clusters of ``non-$A$'' sites which therefore have $A$'s on their outer boundary, and notes that in $D_0$, one has again $A$-monochromatic boundary conditions.)  This exploration tree in turn determines a $\CLE_{\kappa'}$ process $\Gamma_0'$ in $D_0$.  We emphasize that the loops of $\Gamma_0'$ are not necessarily loops of $\Gamma'$ even though the exploration tree associated with the former is a subset of the 
exploration tree of the latter.  (This has to do with how the loops are defined from the exploration tree when the branches are bouncing off the domain boundary.)

This allows us to set things up in a manner which is similar to the proof of Theorem~\ref{thm:cle_not_determined}.  Namely, we continue exploring $\eta'$ and $\wt{\eta}'$ in $D_0$.  Note that these two curves correspond to branches of the exploration tree of $\Gamma_0'$.  Thus, as explained in the proof of Theorem~\ref{thm:cle_not_determined}, we can choose stopping times $\tau$, $\wt{\tau}$ for $\eta$, $\wt{\eta}'$ so that it is a positive probability event that the cross-ratio of the four marked points in $D_0$ corresponding to $\eta'(\tau)$, $\wt{\eta}'(\wt{\tau})$, and the most recent intersections of $\eta'$, $\wt{\eta}'$, respectively, before the times $\tau$, $\wt{\tau}$ with the counterclockwise segment of $\partial D_0$ from $\eta'(t)$ to $\wt{\eta}'(\wt{t})$ (black dots on the right of Figure~\ref{fig:percolation_randomness}) is equal to $1$.  The argument of the proof of Theorem~\ref{thm:cle_not_determined} implies that it is a positive probability event that the conditional law of the remaining 
loop ensemble is not supported on a single configuration.  Let $\eta_0'$ (resp.\ $\wt{\eta}_0'$) be the time-reversal of the loop being explored by $\eta'$ (resp.\ $\wt{\eta}'$) at time $\tau$ (resp.\ $\wt{\eta}$) starting from the most recent intersection of $\eta'$ (resp.\ $\wt{\eta}'$) with the counterclockwise part of $\partial D_0$ from $\eta'(t)$ to $\wt{\eta}'(\wt{t})$ before time $\tau$ (resp.\ $\wt{\tau}$).  On the event that $\eta'$ and $\wt{\eta}'$ form part of the same loop in $\Gamma_0'$, it follows that the initial segments of $\eta_0'$ and $\wt{\eta}_0'$ are not part of the outermost $\CLE_{\kappa'}$.  On the other hand, on the event that $\eta'$ and $\wt{\eta}'$ do not form part of the same loop in $\Gamma_0'$, it follows that the initial segments of $\eta_0'$ and $\wt{\eta}_0'$ are part of the outermost $\CLE_{\kappa'}$.

\begin{figure}[ht!]
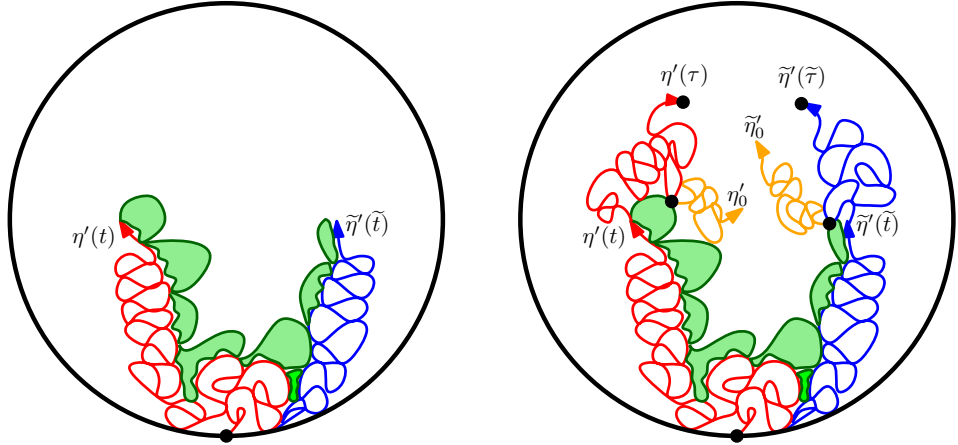

\begin{center}
\includegraphics[scale=0.85,page=10]{figures/routing}\includegraphics[scale=0.85,page=11]{figures/routing}
\end{center}
\caption{\label{fig:percolation_randomness} {\bf Left:} The branch $\eta'$ of the $\CLE_{\kappa'}$ exploration tree from $-i$ to $i$ drawn up to a given time $t$ (red), as well as the time-reversal $\wt{\eta}'$ drawn up to a given time $\wt{t}$ of $\eta'$ targeted at its most recent intersection with the counterclockwise segment of $\partial \D$ from $-i$ to $i$ (blue), just like in Section~\ref{subsec:hookup4_to_8}.  We take the $\CLE_{\kappa'}$ to be coupled with a $\CLE_\kappa$ as a percolation in the $\CLE_\kappa$ carpet; the filled loops are the $\CLE_\kappa$ loops which intersect the part of the $\CLE_{\kappa'}$ we have explored so far.  The conditional law of the $\CLE_{\kappa}$ in the unexplored region is then independently a $\CLE_{\kappa}$ in each of the components.  {\bf Right:} Shown is the continuation of $\eta'$, $\wt{\eta}'$ up to stopping times $\tau$, $\wt{\tau}$ on the event that the cross-ratio of the four marked points (black dots) is equal to~$1$ together with the time-reversals (orange) 
$\eta_0'$, $\wt{\eta}_0'$ of the loops of the $\CLE_{\kappa'}$ associated with the exploration tree restricted to the remaining domain being explored by $\eta'$, $\wt{\eta}'$ at the times $\tau$, $\wt{\tau}$.  If $\eta'$ hits $\wt{\eta}'(\wt{\tau})$ before connecting up with $\eta_0'$ then $\eta_0'$, $\wt{\eta}_0$ are not part of an outermost $\CLE_{\kappa'}$ loop and otherwise they both are.}
\end{figure}
 Then we have four marked boundary points and we can proceed in this setting with the same argument as in the proof of Theorem~\ref{thm:cle_not_determined}.  In particular, the argument of Theorem~\ref{thm:cle_not_determined} implies that if the inner/outer paths create special intersection points, then we cannot tell if the inner paths hook up with each other and the outer paths with each other or if the inner and outer paths hookup.  Note that the actual gasket of the $\CLE_{\kappa'}$ is then not the same depending on the way in which the paths hookup (which is a stronger statement than just saying that the exploration path is not the same).

This implies that the percolation exploration of the $\CLE_\kappa$ is with positive probability not determined by the $\CLE_\kappa$ carpet.  A simple zero-one argument (by looking at smaller and smaller pieces of the CPI) then implies that it is in fact the case that the percolation exploration is almost surely not determined by the $\CLE_\kappa$ carpet and in fact the conditional law is almost surely non-atomic.
\end{proof}

\subsection{Randomness of the SLE curve given its range}
\label{subsec:path_not_determined}

We now turn to  Theorem~\ref{thm:path_not_determined}.  In the present section, we again assume that $\kappa \in (4,8)$.  Let us first note the following fact, that allows us to consider an $\SLE_\kappa(\kappa-6)$ instead of an $\SLE_\kappa$. 

\begin{lemma}
\label{lem:not_determined_same}
The probability that the conditional law of an $\SLE_\kappa$ process given its entire range is non-trivial is either equal to $0$ or to $1$, and it is equal  to the corresponding probability for an $\SLE_\kappa (\kappa -6)$ process.
\end{lemma}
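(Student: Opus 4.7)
The lemma combines two assertions: a zero-one dichotomy for each of $\SLE_\kappa$ and $\SLE_\kappa(\kappa-6)$, and the equality of the two resulting probabilities. I would prove the equivalence via the local absolute continuity of the two laws, and the zero-one law by adapting the Markov-plus-scaling iteration of Section~\ref{sec:proofs}.

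For the equivalence, recall that for $\kappa \in (4,8)$ the law of an $\SLE_\kappa(\kappa-6)$ from $0$ to $\infty$ in $\h$ with force point at $0^+$ is mutually absolutely continuous with respect to the law of an $\SLE_\kappa$, when restricted to the $\sigma$-algebra generated by $\eta|_{[0,\tau]}$, for any stopping time $\tau$ strictly before $\eta$ hits its force point or the positive real axis. Consequently the event ``$\eta|_{[0,\tau]}$ is not determined by its range $\eta([0,\tau])$'' has equal probability under the two laws. Taking $\tau = \tau_R := \inf\{t : |\eta(t)| = R\}$ and using scale invariance of both processes, this common probability is independent of $R$, and by the zero-one law below it coincides with the full-curve probability in each case.

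For the zero-one law, fix one of the two processes and let $p$ denote its full-curve probability. By the conformal Markov property, conditionally on $\eta|_{[0,\tau_R]}$ the continuation $\eta|_{[\tau_R,\infty)}$ is, after mapping the unbounded component of $\h \setminus \eta([0,\tau_R])$ back to $\h$ sending $\eta(\tau_R) \mapsto 0$, an independent copy of the same process. Any range-preserving, trajectory-altering modification of this continuation extends (by leaving $\eta|_{[0,\tau_R]}$ untouched) to such a modification of $\eta$ itself, so non-determination of the continuation's conditional law given its own range propagates to non-determination of the conditional law of $\eta$ given its range. Combined with scale invariance of $p$, this Markov decoupling yields an inequality of the form $1 - p \le (1-p)(1-p')$ for a finite-scale probability $p'$ (with $p' > 0$ whenever $p > 0$), exactly as in the paragraph concluding Section~\ref{sec:proofs}, forcing $p \in \{0,1\}$.

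The main obstacle is making the decoupling step precise: one must identify the finite-scale probability $p'$ with $p$ to close the argument. In the CLE setting of Section~\ref{sec:proofs}, the gasket carried a natural boundary component separating past from future, and this feature was used to transfer a finite-scale modification to a full-curve one. Here the range $\eta([0,\infty))$ has no analogous distinguished boundary, so one must be careful: the subtlety is resolved by noting that the decoupling only needs to run in one direction (modifications in the continuation automatically give full-curve modifications), which is immediate, together with the converse observation that any coupling $(\eta, \wt\eta)$ witnessing $\eta \ne \wt\eta$ with the same total range must, by continuity of the paths, already differ on some initial segment $\eta|_{[0,\tau_R]}$ for $R$ large enough --- yielding $p' \ge p$ after choosing $R$ appropriately. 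With $p' \ge p > 0$ established, the zero-one conclusion of Section~\ref{sec:proofs} applies, and together with the local absolute continuity step the full statement of the lemma follows.
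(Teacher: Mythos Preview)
Your approach differs substantially from the paper's, and unfortunately contains genuine gaps.

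The paper's proof does not use local absolute continuity or a scaling/Markov iteration at all. Instead it exploits the following structural fact: for both $\SLE_\kappa$ and $\SLE_\kappa(\kappa-6)$, the left and right boundaries of the curve are determined by its range, and the conditional law of the curve given these boundaries is independently that of an $\SLE_\kappa(\kappa/2-4;\kappa/2-4)$ in each of the (infinitely many) bubbles they cut out. Hence the full curve is determined by its range if and only if each of these infinitely many conditionally independent $\SLE_\kappa(\kappa/2-4;\kappa/2-4)$ processes is determined by its range. This immediately gives the zero-one dichotomy (an infinite product of a fixed Bernoulli probability is $0$ or $1$) and the equivalence (both processes reduce to the \emph{same} auxiliary process), in one stroke.

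Your equivalence step has two problems. First, mutual absolute continuity does not give ``equal probability'' for an event; it only gives that null sets coincide. So you cannot conclude that the finite-segment non-determination probabilities agree unless you have already established a zero-one law for the \emph{finite} segment, which you have not. Second, and more seriously, for $\kappa\in(4,8)$ the mutual absolute continuity between $\SLE_\kappa$ and $\SLE_\kappa(\kappa-6)$ only holds up to the time the force point is swallowed, and since both curves are boundary-touching this occurs well before $\tau_R$ with positive probability. Your choice $\tau=\tau_R$ therefore lies outside the window where the Girsanov comparison is valid.

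Your zero-one step also has a gap, which you partly acknowledge. In arguing $p'\ge p$ you say that if a coupling $(\eta,\wt\eta)$ has the same full range but $\eta\neq\wt\eta$, then for $R$ large the initial segments $\eta|_{[0,\tau_R]}$ and $\wt\eta|_{[0,\wt\tau_R]}$ already differ. That is true, but it does not witness non-determination of $\eta|_{[0,\tau_R]}$ \emph{given its own range}: there is no reason why $\eta([0,\tau_R])$ should equal $\wt\eta([0,\wt\tau_R])$, so the two initial segments need not share a range at all. The CLE argument you are trying to mimic works because the gasket $\Upsilon$ is a single ambient object that both past and future see; here the ``range'' is the curve itself, and splitting it into past and future ranges is exactly the delicate point.
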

\begin{proof}
By \cite[Proposition~7.30]{MS_IMAG} the conditional law of an $\SLE_\kappa(\kappa-6)$ process given its left and right boundaries is independently that of an $\SLE_{\kappa}(\kappa/2-4;\kappa/2-4)$ in each of the bubbles formed by the left and right boundaries. Note that there are almost surely infinitely many such bubbles, because there are infinitely many global cut points on an $\SLE_\kappa (\kappa -6)$ (this follows from the fact that the same is true for an $\SLE_\kappa$, see \cite {MW_INTERSECTIONS} and the references therein). Note that these left and right boundaries \emph{are} determined by the range of the path.  Therefore, the probability that   an $\SLE_{\kappa}(\kappa-6)$ process is determined by its range is equal to $1$ if the same is true for an $\SLE_{\kappa}(\kappa/2-4;\kappa/2-4)$, and it is equal to $0$ otherwise.

But we know that the conditional law of an $\SLE_\kappa$ process given its left and right boundaries is also independently that of an $\SLE_{\kappa}(\kappa/2-4;\kappa/2-4)$ in each of the infinitely many bubbles formed by the left and right boundaries. Hence, we also have that the probability that   an $\SLE_{\kappa}(\kappa-6)$ process is determined by its range is equal to $1$ if the same is true for an $\SLE_{\kappa}(\kappa/2-4;\kappa/2-4)$, and it is equal to $0$ otherwise. 

This proves the statement in the lemma. 
\end{proof}

We are now ready to prove Theorem~\ref{thm:path_not_determined}.

\begin{proof}[Proof of Theorem~\ref{thm:path_not_determined}] 
By the previous lemma, we can consider the case where $\eta$ is an $\SLE_{\kappa}(\kappa-6)$ process in $\D$ from $-i$ to $i$ which is given as the branch of a $\CLE_{\kappa}$ exploration tree in $\D$.
As explained in the introduction, and in contrast to the proof of Theorem~\ref{thm:cle_not_determined}, we will need to resample the configuration in \emph{two} different well-chosen regions rather than just at \emph{one}, in order to globally preserve the range of $\eta$.

Recall from the branching-tree construction of $\CLE_\kappa$ that the path $\eta$ consists of the (closure of the) concatenation of its excursions away from the counterclockwise arc $\partial$ from $-i$ to $i$, and that each of these excursions is part of a different $\CLE_\kappa$ loop. If one stops $\eta$ at a time $T$ such that $\eta(T) \notin \partial \D$, then the conditional law of the remainder of the $\CLE_\kappa$ loop that it is tracing at that time $T$ is that of an $\SLE_\kappa$ from $\eta (T)$ to $\eta (S)$ in the complement of $\eta [0,T]$ where $S$ is the last time before $T$ at which $\eta$ was in $\partial \D$.   

We suppose that~$r > 0$ is chosen to be very small, and we let~$B_1$ and~$B_2$ be the open disks of radii~$r$ around $-1/2$ and around~$1/2$ respectively.   Fix $k \in \N$.  We consider the following procedure.
\begin{itemize}
\item We condition on the part of $\eta$ and of its time-reversal  (starting from $i$ and $-i$) 
stopped upon hitting $\partial B_1$, and (on the event where they both hit $\partial B_1$) on the time-reversals of the two loops being drawn by each branch when hitting $\partial B_1$, and also stopped upon hitting $\partial B_1$. 
Note that all these branches are all deterministic functions of the $\CLE_\kappa$. We let $E_1$ be the event that these four branches do hit $B_1$. 
\item When $E_1$ does not hold, one does not do anything. On the event $E_1$, we let $\varphi^1$ be the unique conformal transformation from $\D$ to the complementary component $U_1$  of the traced paths, which contains $B_1$ and normalized by $\varphi^1(0) = -1/2$ and $(\varphi^1)'(0) >0$. By our definition of the $\p_{\ul {z}}$,  
 the conditional law of the configuration in $U_1$ is given by the image under $\varphi^1$ of $\p_{\ul{z}}$ for some $\ul{z}$.  As in the proof of Lemma~\ref{lem:randomize_pivotal_points}, we perform the Markov step where we pick $\lfloor 2^{kN \beta_0} \rfloor$ points uniformly in $2^{-kN}\Z^2 \cap B(0,r_0)$, rerandomizing each good pivotal place that we find. The proof of Lemma~\ref{lem:randomize_pivotal_points} implies that, on $E_1$, there is a positive probability bounded from below (by a constant depending only on the cross-ratio of the marked points $\ul{z}$) that the hookup has been changed at exactly one of the attempts, so that the only difference between  the new configuration and the initial one is a small disk (with radius that goes to $0$ as  $k$ gets large).
\item We then repeat the same procedure starting from the obtained configuration, just replacing $B_1$ by $B_2$.
\end{itemize}
By definition, the law of a $\CLE_\kappa$ is invariant for this procedure.  In particular, the law of $\eta$ is invariant for this procedure.

Our goal now is to show that in the limit as $k \to \infty$, this procedure gives us an asymptotic coupling between the original $\CLE_\kappa$ (and its branch of the exploration tree $\eta$) 
and a new $\CLE_\kappa$ such that with positive probability the new branch of the exploration tree from $-i$ to $i$ has the same range as $\eta$ but visits the points of its range in a different order. 

\begin{figure}[ht!]
\begin{center}
\includegraphics[scale=.85]{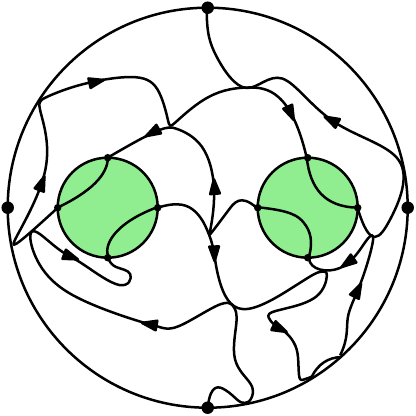}\hspace{0.025\textwidth}\includegraphics[scale=.85]{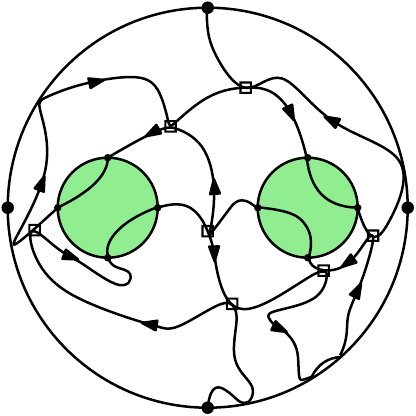}
\end {center}
\caption{\label{fig:eventA} Sketch of the event $A$.  {\bf Left:} The path $\eta$ from $-i$ to $i$.  
{\bf Right:} The boxes indicate the conditions on the branches that have to intersect. }
\end{figure} 

\begin {figure}
\begin {center}
\includegraphics[scale=.85]{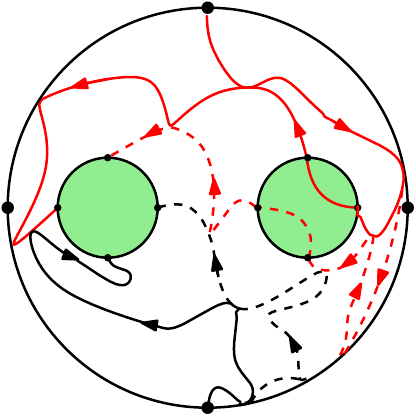}\hspace{0.025\textwidth}\includegraphics[scale=.85]{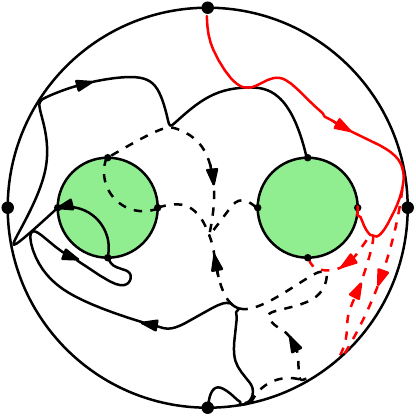}
\end{center}
\caption{\label{fig:Aresampling} Illustration of the two Markovian steps. 
{\bf Left:} Partial exploration of $\eta$ and of its time-reversal before the first Markovian step, and then in dashed the two time-reversed branches,  leading to the definition of $U_1$.  
{\bf Right:} After a successful first Markovian step in $B_1$, the configuration before the second step leading to the definition of $U_2$.  
(Remark: for the Markovian steps to be ``successful'' the two arcs in $U_1$ and then in $U_2$ would typically not be disjoint, but they are drawn as if there are disjoint here  in order to visualize better the change in the hook-up configurations.)}
\end{figure}

We let $A$ be the event that $\eta$ visits (in order) $B_1$, the counterclockwise part of $\partial \D$ from $-i$ to $i$, $B_2$, $B_1$, the clockwise part  of $\partial \D$ from $-i$ to $i$ and $B_2$, and that
furthermore, the various parts of $\eta$ do intersect each other so that they disconnect $B_1$ from $B_2$ as well
as $B_1,B_2$ from $\partial \D$, as schematically depicted in Figure~\ref{fig:eventA}. This event $A$ has a positive probability.  

When $A$ holds, it implies in particular that $E_1$ holds. 
If one performs the Markov step in $U_1$ as described above and ``succeeds'' (meaning that one changed the hook-up in $U_1$ by just changing the configuration 
in one small disk),
then one notes that obtains a new configuration for which 
the event $E_2$ (defined just as $E_1$ but exchanging the role of $B_1$ and $B_2$) holds (see Figure \ref {fig:Aresampling}). In particular, with positive probability, when one performs the second Markov step for this new configuration, the 
probability of changing the hook-up in $B_2$ with exactly one successful modification will be bounded from below as well (and this bound will depend only on the cross-ratio of the corresponding four points, which does in fact not depend on the details of the first resampling procedure, and therefore does not depend on $k$ but only on the initial path $\eta$).

\begin {figure}
\begin {center}
\includegraphics[scale=.85]{figures/two_pivotalsw1}\hspace{0.025\textwidth}\includegraphics[scale=.85]{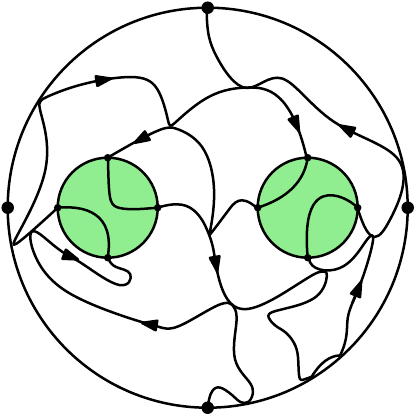}
\end{center}
\caption{\label{fig:Aresampling2} The path $\eta$ before the two Markovian steps, and the resulting new path. (Note that when $k$ is large, the Hausdorff distance between the two paths within $B_1$ and also within $B_2$ are in fact very small, but the hook-up is changed as schematically indicated in these figures). 
}
\end{figure}

Wrapping up, we see that on the event $A$, there is a probability  which is bounded from below uniformly with respect to $k$ that the resampling procedure will change the hookup in $B_1$ and the hookup in $B_2$, and that the initial path $\eta$ gets changed into another nearby path that traces the branches in different order (see Figure \ref {fig:Aresampling2}). 
The result thus follows by taking a limit as $k \to \infty$.

\end{proof}

\section{Comments}
\label{sec:discussion}

\subsection{Relationship with the $\SLE$/GFF coupling}
\label{subsec:gff}

$\SLE_{\kappa}$ and $\CLE_\kappa$ can be naturally coupled with an instance $h$ of the Gaussian free field (GFF) on a simply connected domain $D \subseteq \C$ with appropriately chosen boundary data (see e.g.\ \cite{SchrammShe10,SHE_WELD,DUB_PART,MS_IMAG,MS_IMAG4,cle_percolations}).  Theorem~\ref{thm:path_not_determined} has some consequences for the coupling of $\SLE_\kappa$ for $\kappa \in (4,8)$ with the GFF, Theorem~\ref{thm:cle_not_determined} for the coupling of $\CLE_\kappa$ for $\kappa \in (4,8)$, and Theorem~\ref{thm:percolation_not_determined} for $\CLE_\kappa$ for $\kappa \in (8/3,4)$.

Let us first comment on the $\SLE_\kappa$/GFF coupling for $\kappa \in (4,8)$.  Suppose that $h$ is a GFF on a simply connected domain $D \subseteq \C$ with boundary data so that it may be coupled with an $\SLE_\kappa$ process $\eta$ from one point on $\partial D$ to another.  In this coupling, the boundary data for the conditional law of $h$ given $\eta$ is in each component $U$ of $D \setminus \eta$ given by a constant plus a multiple of the argument of the derivative of the uniformizing conformal map $\varphi \colon U \to \h$.  Although the winding of $\partial U$ is not defined in the usual sense as it is fractal, $\arg \varphi'$ has the interpretation of being the \emph{harmonic extension} of the winding of $\partial U$ from $\partial U$ to $U$.  In particular, there is a marked point on $\partial U$ where $\arg \varphi'$ makes a jump of size $2\pi$.  In terms of the path, this point corresponds to the first (equivalently last) point on $\partial U$ visited by $\eta$.  If one observes only the range of 
$\eta$ in addition to the GFF boundary data then it is in fact possible to recover the trajectory of $\eta$ in a measurable way.  This follows because $\eta$ turns out to be a deterministic function of $h$ \cite{DUB_PART,MS_IMAG} and the values of $h$ in the components of $D \setminus \eta$ are conditionally independent of $\eta$ itself given the values of $h$ along $\eta$.  Theorem~\ref{thm:path_not_determined} therefore implies that one cannot recover the marked points or GFF field heights by observing the range of $\eta$ and the orientations of the loops that it makes alone.

The case of $\CLE_\kappa$ for $\kappa \in (8/3,8) \setminus \{4\}$ is similar to that of $\SLE_\kappa$.  The reason for this is that one couples $\CLE_\kappa$ for $\kappa \in (4,8)$ with the GFF by coupling the whole exploration tree of $\SLE_\kappa(\kappa-6)$ processes with the GFF \cite{MS_IMAG,MS_IMAG4,cle_percolations}.  The case that $\kappa = 4$ is different because in this case the conditional law of the GFF given the loops is given by a constant which is determined by the loop orientations. In particular, the loops are not marked by a special point so that there is no additional randomness involved here.

The Markov step used to prove Theorems~\ref{thm:path_not_determined}--\ref{thm:percolation_not_determined} is also interesting to think about in the context of the GFF: While this operation only affects the small regions of the $\CLE$ picture, it does have a less localized influence for the corresponding GFF.  This is because changing the manner in which the loops of a $\CLE$ are hooked up has the effect of moving the marked point along the component boundaries which in turn translates into changing the GFF heights along the loop boundaries.  That is, our Markov step is a measure preserving transformation defined on GFF instances which leaves the $\CLE$ gasket fixed but makes a macroscopic change to the corresponding GFF instance because the heights are changed.

\subsection{Quantum gravity perspective}
\label{subsec:qg}

It is natural to wonder whether techniques involving quantum gravity and mating of trees, as described e.g.\ in \cite{dms2014mating}, could be used to give an alternate proof of Theorems~\ref{thm:path_not_determined}--\ref{thm:percolation_not_determined}.  In this short subsection, we make some brief and  informal remarks about how the operations described in this paper could potentially be understood and studied within that framework.  The re-randomization procedure that we have described here also naturally fits into the quantum gravity framework developed in \cite{dms2014mating}.  In particular, it is implicit in the constructions of \cite{dms2014mating} that there is a quantum version of the ``natural'' measure on $\SLE_{\kappa}$ double points and intersections of $\CLE_{\kappa}$ loops.  It is not difficult to see that if one picks a typical such point using this measure in either setting and then ``zooms in,'' the resulting limit is the same if one starts in either the $\SLE_{\kappa}$ double point or $\
CLE_{\kappa}$ loop intersection settings.  In fact, it can be described as a gluing of eight so-called quantum wedges which correspond to the four strands of path and the four regions which separate the path strands.  The operation of resampling how the paths are hooked up has a natural interpretation in the quantum gravity perspective.  Indeed, it is shown in \cite{dms2014mating} that an $\SLE_{\kappa}$ path or $\CLE_{\kappa}$ path for $\kappa \in (4,8)$ can be represented as a gluing of a pair of loop-trees which arise from a pair of independent $(\kappa/4)$-stable L\'evy processes; see \cite[Figure~1.6 and Figure~1.7]{dms2014mating}.  These loop-trees correspond to the components which are cut off on the left and right sides of the path.  Regluing the paths in order to switch the direction of a pivotal point corresponds to natural operations that one can preform directly on the trees (hence L\'evy processes) themselves, namely cutting the pair of trees to form new trees or grafting trees together.

\appendix

\section{Bi-chordal resampling}
\label{sec:bichordal}

The purpose of this appendix is to explain how the bi-chordal resampling arguments developed in \cite[Section~4]{MS_IMAG2} can be extended to the setting in which the paths in question are allowed to intersect each other. We write this as an appendix as it can be read independently of the present paper and may also serve as a future reference for such resampling questions. 
First, in Section~\ref{subsec:bc_two_paths}, we will establish a general statement which is used in the proof of Lemma~\ref{lem:sle_k_k_minus_6_reversal} in the case that $\kappa \in (6,8)$ and in Section~\ref{subsec:bc_three_paths} a variant of this that is used in the proof of Lemma~\ref{lem:sle_k_k_minus_6_reversal} in the case $\kappa \in (4,6)$.

\subsection{Basic characterization}
\label{subsec:bc_two_paths}

\begin{theorem}
\label{thm:bc_two_paths}
Fix $\kappa > 0$, $\rho_1,\rho_2,\rho_1',\rho_2' > (-2) \vee (\kappa/2-4)$.  Suppose that $D \subseteq \C$ is a Jordan domain and $x_1,x_2,y_1,y_2 \in \partial D$ are distinct and given in counterclockwise order.  There is at most one probability measure on pairs of paths $\eta_L,\eta_R$ respectively connecting $x_1$ to $y_1$ and $x_2$ to $y_2$ such that the conditional law of $\eta_L$ given $\eta_R$ is independently that of an $\SLE_\kappa(\rho_1;\rho_2)$ process in each of the components of $D \setminus \eta_R$ which are to the left of $\eta_R$ and the conditional law of $\eta_R$ given $\eta_L$ is independently that of an $\SLE_\kappa(\rho_1';\rho_2')$ process in each of the components of $D \setminus \eta_L$ which are to the right of $\eta_L$. 
\end{theorem}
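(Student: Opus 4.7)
My plan is to adapt the bi-chordal resampling argument of \cite[Theorem~4.1]{MS_IMAG2} to the present more permissive setting where $\eta_L$ and $\eta_R$ may intersect one another. Suppose $\mu_1, \mu_2$ are two probability measures on pairs $(\eta_L, \eta_R)$ both satisfying the stated conditional resampling property. The goal is to show $\mu_1 = \mu_2$ via a coupled Markov chain whose transition is stationary for both and whose one-step contraction yields almost sure coincidence.

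First I would define a Markov kernel $K$ on path pairs by the two-half-step operation: given $(\eta_L, \eta_R)$, resample $\eta_L$ from the prescribed conditional law (a product of $\SLE_\kappa(\rho_1;\rho_2)$ processes across the components of $D \setminus \eta_R$ lying to the left of $\eta_R$), then resample $\eta_R$ from the corresponding conditional law given the updated $\eta_L$. By hypothesis both $\mu_1$ and $\mu_2$ are invariant for $K$. Next, realize $(\eta_L^1, \eta_R^1) \sim \mu_1$ and $(\eta_L^2, \eta_R^2) \sim \mu_2$ on a common probability space, and couple each half-step using the maximal coupling of the two prescribed conditional laws, so that agreement is preserved once achieved and coincidence probability is maximized at every step.

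The crux of the argument is a one-step contraction estimate: there exists a measurable event $E$ of positive $\mu_1 \otimes \mu_2$-probability and a constant $c > 0$ such that, conditioned on $E$, after one application of the coupled kernel $K$ the two pairs coincide with probability at least $c$. To construct $E$, I would localize to a macroscopic topological configuration in which both $\eta_R^1$ and $\eta_R^2$ possess a simple, well-separated portion across a designated subdomain (and similarly for $\eta_L^1, \eta_L^2$ in a complementary region). On such an event, the absolute continuity of $\SLE_\kappa(\rho_1;\rho_2)$ processes with respect to perturbations of the domain and force-point locations (which holds precisely in the range $\rho > (-2) \vee (\kappa/2-4)$ where these processes exist, are continuous, and are reversible per \cite{MS_IMAG,MS_IMAG3}) gives Radon--Nikodym derivatives bounded away from $0$ and $\infty$ between the two conditional laws, so the maximal coupling produces coincidence in the resampled $\eta_L$ with positive probability; iterating with $\eta_R$ yields full coincidence.

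Given the contraction estimate, stationarity of $\mu_1$ and $\mu_2$ together with a standard zero-one / ergodicity argument forces the diagonal to be the unique stationary coupling, hence $\mu_1 = \mu_2$. The main obstacle I anticipate is handling the fact that the prescribed conditional laws are products of $\SLE_\kappa(\rho_1;\rho_2)$ processes across potentially infinitely many components of $D \setminus \eta_R$ (the paths being non-simple when $\kappa > 4$), so one cannot naively couple component-by-component across different realizations. The resolution is to work with only one macroscopic ``anchor'' component per resampling step --- the one containing a prescribed neighborhood of $y_1$ (resp.\ $y_2$) --- using the conformal Markov property to stop the resampled paths before they interact with the fine structure, and to propagate agreement into the remaining components through repeated iteration of $K$ rather than attempting simultaneous coupling across all components at once.
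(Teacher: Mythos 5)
Your overall strategy (a two-half-step resampling kernel for which both laws are invariant, a coupling showing positive probability of exact coincidence, and an ergodicity argument to conclude equality) is the same as the paper's. But there is a genuine gap in how you propose to handle the case $\kappa>4$, where the paths are non-simple. The coincidence step of the coupling argument requires that, at some finite step, the \emph{entire} configurations of the two chains agree exactly with positive probability; partial agreement is worthless, because at the next step the two conditional laws are conditioned on different data. When $\eta_R^1\neq\eta_R^2$ are non-simple, the prescribed conditional law of $\eta_L$ forces it to visit \emph{every} component of $D\setminus\eta_R^i$ to the left of $\eta_R^i$ (including all the bubbles pinched off by self-intersections), and these collections of components differ for the two copies; hence the two conditional laws of $\eta_L$ are mutually singular and no coupling — maximal or otherwise — can make the resampled paths coincide. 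Your proposed fix (couple only in an ``anchor'' component near $y_1$ and ``propagate agreement into the remaining components through repeated iteration'') does not address this: agreement in one component does not persist or spread, since the whole path is resampled at each step from laws conditioned on globally different configurations. The paper avoids the issue entirely by first invoking $\SLE$ duality: for $\kappa>4$ one replaces $\eta_L$ by its right boundary and $\eta_R$ by its left boundary (whose conditional laws are known $\SLE_{16/\kappa}$-type curves), reducing the theorem to the simple-curve case $\kappa\in(0,4]$. Without this reduction, or some genuinely new substitute for it, your argument does not go through in the regime that is actually relevant for Lemma~\ref{lem:sle_k_k_minus_6_reversal}.

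Two further points, less fatal but worth flagging. First, even in the simple-curve case your absolute-continuity claim is too strong: the conditional laws of $\eta_L$ given two genuinely different paths $\eta_R^1,\eta_R^2$ are not mutually absolutely continuous with bounded Radon--Nikodym derivative; what is true (and what the paper uses) is that one can first use the resampling randomness itself to confine both resampled right-hand sets to the right half of a fixed rectangle with positive probability, so that the subsequent left-resampling takes place in a \emph{single} component for both copies, and the two laws restricted to the event that the resampled curve stays in the common left half have positive overlap — that is where exact coincidence comes from. Your event $E$, phrased as a condition on the current configurations rather than as a statement about the resampling step, does not by itself guarantee this single-component structure. Second, since $\mu_1,\mu_2$ need not be ergodic, the conclusion requires the ergodic decomposition (extremal invariant measures are either mutually singular or equal); your appeal to ``the diagonal being the unique stationary coupling'' is not the correct formulation, though it is repairable. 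The paper also spends real effort on the measurability of the resampling kernel (the dyadic ``good set'' approximation), which your sketch leaves untouched.
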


We emphasize that we do not prove the existence of a probability measure which satisfies the hypotheses of Theorem~\ref{thm:bc_two_paths}.  However (and this is the case in the present paper's $\CLE_\kappa$ setting or in the imaginary geometry framework), the existence of such a measure is typically provided by ad-hoc constructions.

We note that it suffices to prove Theorem~\ref{thm:bc_two_paths} in the case that $\kappa \in (0,4]$ because of $\SLE$-duality considerations.  More precisely, in the case that $\kappa > 4$ we can replace~$\eta_L$ by its right boundary and~$\eta_R$ by its left boundary because the conditional law of~$\eta_L$ and~$\eta_R$ in each case is known (see, e.g., \cite{MS_IMAG}).

Throughout, we will make the concrete choice that $D = (0,1) \times (0,\ell)$ for some fixed $\ell > 0$.  We denote the left boundary segment $[0, i \ell]$  by $L$ and the right boundary segment by~$R$.  Let $\CK_L$ (resp.\ $\CK_R$) denote the set of all compact connected subsets of~$\overline D$ that contain $L$ (resp.\ $R$). We endow $\CK_L$, $\CK_R$ with the Hausdorff distance between compact sets and consider them with their associated Borel $\sigma$-algebra.

We are now going to give a definition of a measurable family of elements of $\CK_L$ that we will call good, but we already note that if $K$ consists of the union of $L$ with a continuous curve from $0$ to $i \ell$ in $\overline D$, then it will necessarily be such a good set.

When $K_1 \in \CK_L$, let us look at the connected components of $D \setminus K_1$ whose boundary contains a non-trivial interval of $R \setminus K_1$. We call $(O_j)$ this family of open sets, and we denote by $a_j$ and $b_j$ the corresponding extremities of this interval of $(R \setminus K_1) \cap \overline O_j$. 

For each $n$, we also define the set $K_1^n$ which is obtained by taking the union of all closed dyadic squares of side-length $2^{-n}$ (i.e., with corners in $2^{-n} \Z^2$)  that intersect $K_1$.  For each $n$, the map $K_1 \mapsto K_1^n$ is easily shown to be measurable and it can take only finitely many values. Hence, any function of $K_1^n$ will be a measurable function of $K_1$. Furthermore, $K_1^n$ is also in $\CK_L$ and we can then define the family $(O_i^n)$ and the points $a_i^n$ and $b_i^n$ just as before.

\begin {definition}
We say that a set $K_1$ is good (in $\CK_L$) if the following two conditions hold: 
\begin{enumerate}[(i)]
\item\label{it:good1} For each $N > 0$, for each $n$, the number of $(O_i^n)$ with diameter greater than $1/N$ is finite and bounded uniformly in $n$. 
\item\label{it:good2} For each $j$, the boundary of $O_j$ is a continuous curve. 	
\end{enumerate}
\end {definition}

Note that since each $O_i^n$ is contained in some $O_j$, and that each $O_j$ contains at least one $O_i^n$ of half its diameter when $n$ is sufficiently large, this implies also that for each $N$, the number of $O_j$ with diameter greater than $1/N$ is finite.

Let us give some further definitions: For each $O_j$ and each large $n$, let us denote by $O_{i_n (j)}^n$ the largest $O_i^n$ that is contained in $O_j$ (for instance, the one with largest diameter, breaking possible ties using some deterministic rule) if it exists (and it always does provided $n$ is large enough). Let us write $\overline a_j^n:= a_{i_n (j)}^n$ and $\overline b_j^n := b_{i_n (j)}^n$ the corresponding boundary points.

A first remark is that the set of good sets is a measurable subset of $\CK_L$.  Indeed, both~\eqref{it:good1} and~\eqref{it:good2} can be expressed in terms (i.e., as countable unions of intersections of unions of intersections) of events involving finitely many sets $K_1^n$.  This is clear for~\eqref{it:good1}. To see that this is also the case for~\eqref{it:good2}, one can note that~\eqref{it:good2} is equivalent to the fact that for each $j$, there exists parameterizations of the boundaries of $O_{i_n (j)}^n$ (which are continuous curves) such that this sequence of continuous curves is uniformly Cauchy as $n \to \infty$.  This can be seen by considering the sequence of conformal maps $\phi_n^j$ which take $\D$ to $O_{i_n(j)}^n$ with $-i,-1,i$ respectively taken to $\ol{a}_j^n$, $(\ol{a}_j^n+\ol{b}_j^n)/2$, $\ol{b}_j^n$ and noting that they are uniformly Cauchy if and only if~\eqref{it:good2} holds.

For each given $K_1 \in \CK_L$, we now describe a procedure $\Phi$ to define a random element $K_2 \in \CK_R$: First, if $K_1$ is not good, we just set $K_2 = R$.  If $K_1$ is good, then inside each of the sets $O_j$ (using the same notations as above), we then sample an independent $\SLE_\kappa(\rho_1'; \rho_2')$ denoted by $\gamma_j$ from $a_j$ to $b_j$, with marked points that are both located at $a_j$ (one on each ``side''). We then define the set $K_2 :=R  \cup{ ( \cup_j  \gamma_j)}$. Note that this set is compact, connected and contains $R$, i.e., it is in $\CK_R$. 

A first important observation is the following: 

\begin{lemma}
If we endow $\CK_L$ and $\CK_R$ with the Hausdorff metric, the previous procedure describes a Borel-measurable Markov kernel (in other words, the map $\Phi$ that associates to each $K_1$ the law of $K_2$ is measurable). 
\end {lemma}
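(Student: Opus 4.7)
The plan is to prove measurability by approximation with the dyadic pixelations $K_1^n$. Since the set of non-good $K_1$ is Borel (as already noted in the excerpt) and $\Phi$ is constant equal to $\delta_R$ on it, it suffices to show that $\Phi$ restricted to the good sets is Borel-measurable as a map into the space of probability measures on $\CK_R$ (equipped with the weak topology, which is Polish since $\CK_R$ with the Hausdorff metric is Polish).

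First I would check that the discretized versions of $\Phi$ are trivially measurable. For each fixed $n$, the map $K_1 \mapsto K_1^n \cup L$ is measurable and takes only countably many values in $\CK_L$. For each such value, the complementary components $O_i^n$ are bounded by polygonal arcs, hence are Jordan domains with well-defined marked boundary points $a_i^n, b_i^n$ on $R$. The procedure $\Phi$ applied to $K_1^n \cup L$ consists of independently sampling $\SLE_\kappa(\rho_1';\rho_2')$ from $a_i^n$ to $b_i^n$ in each $O_i^n$ and taking the union with $R$; this is a countable family of explicit probability measures, so the map $K_1 \mapsto \Phi(K_1^n \cup L)$ into $\CP(\CK_R)$ is Borel-measurable.

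Next, I would prove that for each good $K_1$ we have the weak convergence $\Phi(K_1^n \cup L) \Rightarrow \Phi(K_1)$ as $n \to \infty$. Property~\eqref{it:good1} ensures that for every $N$, only finitely many $O_j$ and finitely many $O_i^n$ have diameter at least $1/N$, uniformly in $n$. Property~\eqref{it:good2}, together with the observation already in the excerpt regarding the uniform Cauchy behavior of the maps $\phi_n^j$, guarantees that for each relevant $j$, the pointed domain $(O_{i_n(j)}^n, \bar a_j^n, \bar b_j^n)$ converges in the Carath\'eodory sense with marked points to $(O_j, a_j, b_j)$, with uniform convergence of the boundary parameterizations. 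Invoking the standard stability of $\SLE_\kappa(\rho_1';\rho_2')$ under such pointed Carath\'eodory convergence (of Kemppainen--Smirnov type), the $\SLE$ curves sampled in $O_{i_n(j)}^n$ converge in law in the Hausdorff topology to those in $O_j$. Coupling the finitely many ``large'' components and bounding the contribution of the remaining ones by their small diameter (again via~\eqref{it:good1}), one obtains the desired weak convergence of the entire configuration. Since a pointwise limit of Borel-measurable maps into a Polish space is Borel-measurable, the measurability of $\Phi$ follows.

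The main obstacle will be the SLE stability step, which is delicate because the two force points for $\SLE_\kappa(\rho_1';\rho_2')$ both sit at the same boundary prime end $a_j$ (a degenerate configuration), and the approximating domains $O_{i_n(j)}^n$ have only polygonal, not smooth, boundary. Handling this requires verifying that Carath\'eodory convergence of the pointed domains, combined with the uniform boundary-curve convergence supplied by~\eqref{it:good2}, translates into uniform convergence of the relevant conformal maps up to the boundary on the appropriate arcs, so that the driving Loewner SDE passes to the limit; this is essentially the content of the Kemppainen--Smirnov framework adapted to $\SLE_\kappa(\rho)$ with boundary force points, and is where all the real work lies. Once this stability statement is in place, assembling the three steps above yields that $\Phi$ is a Borel-measurable Markov kernel.
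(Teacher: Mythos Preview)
Your overall strategy---discretize via $K_1^n$, observe that $\Phi(K_1^n\cup L)$ is trivially measurable since it takes countably many values, and then establish weak convergence $\Phi(K_1^n)\Rightarrow\Phi(K_1)$ for good $K_1$---is exactly the paper's approach. The difference lies in how the convergence step is carried out.

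You identify the ``SLE stability step'' as the main obstacle and propose to handle it via Carath\'eodory convergence of the pointed domains together with a Kemppainen--Smirnov type argument for the driving SDE. The paper bypasses this entirely by exploiting \emph{conformal invariance} of $\SLE_\kappa(\rho_1';\rho_2')$ directly. Concretely, one defines the conformal map $\psi_n^j\colon O_j\to O_{i_n(j)}^n$ sending $a_j,(a_j+b_j)/2,b_j$ to $\bar a_j^n,(\bar a_j^n+\bar b_j^n)/2,\bar b_j^n$. Condition~\eqref{it:good2} guarantees that $\psi_n^j$ extends continuously to the boundary and converges uniformly to the identity as $n\to\infty$. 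Since the law of $\SLE_\kappa(\rho_1';\rho_2')$ is conformally invariant, $\psi_n^j(\gamma_j)$ has exactly the law of the SLE in $O_{i_n(j)}^n$; this gives a coupling in which the Hausdorff distance between $\gamma_j$ and its discretized counterpart is at most $\|\psi_n^j-\mathrm{id}\|_\infty\to 0$. Combined with the diameter control from condition~\eqref{it:good1}, this yields the coupling of $K_2$ and $K_2^n$ within Hausdorff distance $2\eps$ for $n$ large. So what you flag as ``where all the real work lies'' is in fact a one-line consequence of conformal invariance, and no appeal to Loewner SDE stability or driving-function convergence is needed.
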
 

\begin {proof}
It suffices to show that the law of $K_2$ can be viewed as the weak limit as $n \to \infty$ 
of  $\Phi (K_1^n)$ (which are therefore measurable with respect to $K_1$):

When $K_1$ is good, for each $n$, let us define $\gamma_i^n$ the corresponding SLE curves in $O_i^n$. For each $j$, condition~\eqref{it:good2} shows that the law of $\gamma_{i_n (j)}^n$ converges to that of $\gamma_j$.
Indeed, if we define the conformal map $\psi_n^j$  from $O_j$ onto $O_{i_n (j)}^n$ that maps $a_j, (a_j + b_j)/2, b_j$  onto $\overline a_j^n, (\overline a_j^n+ \overline b_j^n) /2, \overline b_j^n$ respectively, then because $K_1$ is good, then $\psi_n^j$ extends continuously to the boundary (i.e. to the set of prime-ends) and converges uniformly to the identity map as $n \to \infty$. 

Now we note that for each $\eps > 0$, the number of connected components $O_i^n$ with diameter greater than $\eps$ is equal to the number of connected component $O_j$ with 
diameter greater than $\eps$ for all $n$ large enough (only the $O_{i_n(j)}^n$ will have diameter greater than $\eps$  (which follows readily from condition~\eqref{it:good2}).

From this, it follows readily that when $n$ is large enough, one can couple $K_2^n$ (defined just as $K_2$ but replacing $K_1$ by $K_1^n$) and $K_2$ so that the Hausdorff distance between the two is smaller than $2 \eps$ (as one just needs to control the SLE paths in the finitely many $O_j$'s). Hence, the law of $K_2$ can indeed be viewed as the weak limit of the law of $K_2^n$. 
\end {proof} 

In a completely symmetric manner (with respect to the line $1/2 + i \R$), we can define for each $K_2 \in \CK_R$ a procedure to define a random set $K_1' \in \CK_L$, just replacing $(\rho_1'; \rho_2')$ with $(\rho_1; \rho_2)$.  We denote by $\Psi$ the measurable Markovian kernel that associates to each $K_1 \in \CK_L$ the law of the random set $K_1'$ obtained iteratively by first choosing $K_2$ applying the first procedure, and then $K_1'$ (in a conditionally independent way, given $K_2$) using this second procedure. This kernel is Markovian as a composition of Markovian kernels.   One then has the following result:

We now state the following proposition that immediately implies Theorem~\ref{thm:bc_two_paths}. 
\begin{proposition}
\label{prop:markov_kernel}
There exists at most one probability measure $\pi$ on $\CK_L$ that is invariant under $\Psi$ and such that $\pi$ a.e.\ $K_1$ is the union of the range of a continuous curve and $L$. 
\end {proposition}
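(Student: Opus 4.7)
The plan is to prove uniqueness by a coupling argument, in the spirit of the bi-chordal resampling scheme of \cite[Section~4]{MS_IMAG2}. Given a $\Psi$-invariant probability measure $\pi$ on $\CK_L$, I would first lift it to a joint measure $\mu_\pi$ on $\CK_L\times\CK_R$ by drawing $K_1\sim\pi$ and sampling $K_2$ via the kernel $\Phi$. Invariance of $\pi$ under $\Psi$ says exactly that, under $\mu_\pi$, the conditional law of $K_1$ given $K_2$ is in turn prescribed by the corresponding $\SLE_\kappa(\rho_1;\rho_2)$ kernel in each left-touching component of $D\setminus K_2$. Uniqueness of $\pi$ is therefore equivalent to uniqueness of a joint law on $(K_1,K_2)$ whose two conditionals are both specified, and the ``Gibbs-sampler'' Markov chain on $\CK_L\times\CK_R$ that alternates resampling $K_1$ and $K_2$ has every such $\mu_\pi$ as an invariant distribution.

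Given two invariant joint measures $\mu$ and $\mu'$, I would couple two runs of this Gibbs-sampler chain started from $\mu$ and $\mu'$ respectively, and argue that they can be brought into agreement with positive probability in one step. The technical core is a local coupling lemma: on an event of positive probability under both $\mu$ and $\mu'$, both configurations place a fixed macroscopic subdomain $U\subset D$ inside a single right-touching component of $D\setminus K_1$ (respectively $D\setminus K_1'$), and the two components have comparable conformal moduli at $U$. On this event, the two resampled $\SLE_\kappa(\rho_1';\rho_2')$ curves restricted to $U$ have mutually absolutely continuous laws with uniformly bounded Radon--Nikodym derivative, so a maximal coupling realised via a Girsanov/driver-coupling of the Loewner driving Brownian motions brings the two resampled curves into agreement on $U$ with positive probability. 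Conditions~(i) and~(ii) in the definition of a good set are exactly what is needed to ensure that the conformal maps from the $O_j$'s to a reference subdomain extend continuously to the boundary and vary regularly enough to make the absolute-continuity estimate uniform.

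To conclude, I would propagate and iterate. Once the two chains agree on $U$ after some resampling step, the subsequent resampling on the $\CK_L$-side extends the agreement to all left-touching components of $D\setminus K_2$ meeting $U$, and alternating left/right steps while exhausting $D$ by a countable family of reference subdomains spreads agreement throughout $D$. Stationarity implies that the conditional probability of the good event, and hence of the local coupling succeeding at any given step, is bounded below by a fixed positive constant at every time, so by a standard Borel--Cantelli/renewal argument the two chains coincide almost surely after finitely many steps; by the usual stationary-coupling conclusion this forces $\mu=\mu'$ and hence $\pi=\pi'$. The main obstacle is precisely the local coupling lemma: one must control two $\SLE_\kappa(\rho)$ laws on a priori wild but conformally comparable complementary components simultaneously. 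The good-set framework, together with the specified ranges of the $\rho$-parameters (so that the SLE curves may touch the boundary but are not too singular), is tailored to provide this control, and verifying uniform comparability of the driving measures across the two realisations on the good event is the delicate step.
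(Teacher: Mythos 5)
There is a genuine gap at the final step of your argument, and it is precisely the point where the paper's proof uses a structural ingredient that you have left out. The paper does not attempt to show that two stationary chains coalesce almost surely. Instead it first invokes the ergodic decomposition: an invariant measure supported on good sets is a mixture of extremal ergodic invariant measures, and two distinct extremal ergodic measures are necessarily mutually singular. With that dichotomy in hand, a \emph{single} resampling step in which the two copies can be coupled to produce \emph{exactly equal} outputs with (configuration-dependent, not uniform) positive probability already yields a contradiction with mutual singularity, hence equality of the two ergodic measures and uniqueness of $\pi$. Your route instead requires almost sure coalescence of the two stationary chains, and for that you assert that ``stationarity implies that the conditional probability of the local coupling succeeding at any given step is bounded below by a fixed positive constant.'' Stationarity gives only that the \emph{marginal} probability of your good event is constant in time; it gives no lower bound on the success probability conditionally on the past (equivalently, uniformly over the pair of current configurations), and no such uniform bound is established anywhere — indeed the paper explicitly flags uniform mixing as a plausible but unproven strengthening. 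Without either the ergodic-decomposition/singularity argument or a genuinely uniform coupling estimate, the Borel--Cantelli/renewal conclusion does not follow, so your proof as written does not close.

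A second, related problem is the local-coupling-then-propagation scheme. Your coupling only makes the two resampled curves agree on a fixed subdomain $U$, and you then claim that subsequent alternating resamplings ``spread agreement throughout $D$.'' This does not work as stated: the resampling kernels $\Phi$ and its left-sided analogue depend on the \emph{entire} current configuration (through all the complementary components $O_j$ and their marked points), so two configurations that agree on $U$ but differ elsewhere are resampled according to different conditional laws even inside $U$, and partial agreement has no tendency to persist or grow. The paper's coupling avoids this entirely: with positive probability both resampled right-hand sets $K_2,\widetilde K_2$ are confined to the right half $D_+$ of the rectangle (they need not agree), and then, by absolute continuity of $\SLE_\kappa(\rho_1;\rho_2)$ between the two nearby domains $D\setminus K_2$ and $D\setminus \widetilde K_2$, the left-resampled sets $K_1'$ and $\widetilde K_1'$ can be coupled to coincide \emph{globally} with positive probability. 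Exact global agreement of the $\CK_L$-marginals at one step is what contradicts mutual singularity; agreement on a subdomain would not. If you replace your iteration by the ergodic dichotomy and upgrade your local coupling to the paper's ``confine $K_2$ to $D_+$, then couple $K_1'$ exactly'' step, your argument becomes essentially the paper's proof.
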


This is useful in the present paper, because in the CLE/SLE settings that we work with, we are given a probability measure on $\CK_L$ that satisfies these properties. 

\begin{remark}
Stronger statements probably do hold, but the previous one is often sufficient (and it is the case in the present paper). For instance, one can probably show uniform exponential mixing (i.e., that starting from any two different configurations,  the two chains started from these two configurations can be coupled so that they coincide before the $N$-th iteration step with a probability that is bounded by $\exp ( -cN)$ for some constant $c$ that is independent of the initial configurations). 
\end{remark}

Recall some basic features of Markov kernels (see for instance Chapter 6 of \cite {Varadhan}) that will be useful in the proof: 
The set of invariant probability measures is convex, and the extremal points in this convex set are exactly the ergodic invariant measures.  As a consequence, two extremal ergodic invariant measures are either mutually singular or equal.

\begin{proof}[Proof of Proposition~\ref{prop:markov_kernel}]
We are going to prove that 
there exists at most one probability measure $\pi$ on $\CK_L$ that is invariant under $\Psi$ and such that $\pi$ a.e.\ $K_1$ is good.  Such a measure $\pi$ being necessarily a mixture of extremal invariant ergodic measures supported on good sets, it will suffice to see that there exists at most one extremal ergodic measure supported on good sets.  

Let $\nu$, $\widetilde \nu$ be two extremal ergodic measures supported on good sets.  Let us choose $K_1$ and $\widetilde{K}_1$ independently according to these two probability measures (on the same 
probability space) and then let us first apply (independently) the first step $\Phi$ of $\Psi$ to construct $K_2$ and $\widetilde K_2$.
  
Note that for every good $K_1$ there exists  $\delta (K_1) > 0$ such that with probability at least $\delta$, $K_2$ is a subset of the right-hand half $D_+ = (1/2, 1 ] \times [0, \ell]$ of the rectangle $D$ (recall that finitely many $O_j$'s have diameter at least $1/2$). Hence, with a random but positive conditional probability (given $K_1, \widetilde K_1$), both $K_2$ and $\widetilde K_2$ are subsets of $D_+$.  But, then, conditionally on this event, it is possible (simply using absolute continuity of $\SLE_\kappa (\rho_1 ; \rho_2)$ processes defined in two different domains) to see that one can couple the second iteration step that constructs $K_1'$ and $\widetilde K_1'$ in such a way that these two sets do coincide (and stay in the left-hand half of $D$) with positive probability.  But since the laws of $K_1'$ and $\widetilde K_1'$  are respectively equal to $\nu$ and $\widetilde \nu$, this shows that these two measures are not singular, which implies (because extremal ergodic measures are either 
singular or equal) that they are equal.
\end{proof}

\subsection{Second variant}
\label{subsec:bc_three_paths}

We now explain a closely related result, which will be derived using a variation of the argument used to prove Theorem~\ref{thm:bc_two_paths}.  Throughout, we suppose that $\kappa \in (8/3,4)$ and $\kappa'=16/\kappa \in (4,6)$.  As we mentioned earlier, this version is relevant for the proof of Lemma~\ref{lem:sle_k_k_minus_6_reversal} in the case $\kappa' \in (4,6)$.

Let us consider the same rectangle $D$. We denote by $T$ and $B$ its top and bottom sides.  For $l \in (0, \ell)$, we also denote by $I_l$ the horizontal segment $[0,1] \times \{ l \}$. 

When $\eta'$ is a continuous non-self-crossing and non-self-tracing path in $\overline D$ from $0$ to $i \ell$, we say that a connected component of $D \setminus \eta'$ is {\em to the right of $\eta'$} if its boundary contains an open interval of $(T \cup R \cup B ) \setminus \eta'$. 

Suppose that we have a law on pairs $(\eta',\Gamma)$ where $\eta'$ is a continuous non-self-crossing path in $\overline D$ from $0$ to $i \ell$ and 
$\Gamma$ is a collection of loops in the components of $D \setminus \eta'$ (i.e., each loop is in the closure of one of these components) which are to the right of $\eta'$ which satisfy the following properties:
\begin{itemize}
\item Given $\eta'$, the conditional law of $\Gamma$ is given independently by that of a $\BCLE_\kappa(-\kappa/2)$ in each of the components of $D \setminus \eta'$ which are to the right of $\eta'$ with marked points given by the endpoints of the interval of the component boundary which is contained in $\partial D$.
\item If we condition on the loops of $\Gamma$ which intersect $T$, then the conditional law of $\eta'$ in the remaining domain is that of an $\SLE_{\kappa'}(\kappa'-6)$ from $0$ to $i \ell$ with a single force point at the right-most point $y$ of $B$ so that no loop of $\Gamma$ intersects both $T$ and $[0, y)$.  
\item If we condition on the loops of $\Gamma$ which intersect $B$, then the conditional law of the time-reversal of $\eta'$ in the remaining domain is that of an $\SLE_{\kappa'}(\kappa'-6)$ from $i \ell$ to $0$ with a single force point at 
at the right-most point $y$ of $T$ so that no loop of $\Gamma$ intersects both $B$ and $[i \ell, y)$.  
\end{itemize}

Then, we see that the law of $\eta'$ is invariant under two different operations: 
\begin{itemize}
\item Sample $\Gamma$ given $\eta'$, keep only the loops that touch $T$, and then resample $\eta'$.
\item Sample $\Gamma$ given $\eta'$, keep only the loops that touch $B$, and then resample $\eta'$. 
\end{itemize}

Exactly as in the previous argument, one can see that these two resampling operations correspond to two Markovian kernels that we denote by $\Psi_1$ and $\Psi_2$ (one can view the paths in question as corresponding to compact sets, define good sets in a similar manner as before, and define the operation in a measurable way, and see that it coincides with the above description in the case where the set is a continuous path). Then: 

\begin {proposition}
There exists at most one probability measure $\pi$ that is invariant under both $\Psi_1$ and $\Psi_2$ and that is supported on the set of continuous non-self-crossing curves from 
$0$ to $i \ell$ in $\overline D$. 
\end {proposition}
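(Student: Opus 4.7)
I follow the strategy of the proof of Proposition~\ref{prop:markov_kernel}, adapted to the setting where invariance is imposed under both kernels $\Psi_1$ and $\Psi_2$ simultaneously. The set of probability measures invariant under both $\Psi_1$ and $\Psi_2$ is convex, and its extremal points are the measures that are ergodic for the semigroup generated by $\Psi_1, \Psi_2$. Two distinct extremal ergodic measures are mutually singular, so it suffices to show that any two extremal ergodic invariant measures $\nu, \wt{\nu}$, both supported on continuous non-self-crossing curves from $0$ to $i\ell$, cannot be mutually singular.

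Sample $\eta' \sim \nu$ and $\wt{\eta}' \sim \wt{\nu}$ independently on a common probability space. Apply $\Psi_1$ to each curve: draw BCLE configurations $\Gamma, \wt{\Gamma}$ conditional on the current curve, retain the subcollections $\Gamma_T, \wt{\Gamma}_T$ of loops intersecting $T$, and resample the curves given these retained loops. Let $U_T^\eps$ be the event that $\Gamma_T$ is contained in the thin strip $\{y \in \ol{D} : \im(y) > \ell - \eps\}$, and define $\wt{U}_T^\eps$ symmetrically. The analog of the estimate $\p[K_2 \subseteq D_+ \mid K_1] \geq \delta(K_1)$ used in the proof of Proposition~\ref{prop:markov_kernel} is here the statement that for each good $\eta'$ there exists $\delta(\eta') > 0$ with $\p(U_T^\eps \mid \eta') \geq \delta(\eta')$ for $\eps$ sufficiently small; this follows from standard BCLE$(-\kappa/2)$ estimates, namely that with positive probability all loops of the BCLE in the components of $D \setminus \eta'$ to the right of $\eta'$ which touch $T$ have diameter at most $\eps$. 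Next, apply $\Psi_2$ to both resampled curves and let $U_B^\eps, \wt{U}_B^\eps$ be the symmetric events localizing the retained $B$-touching loops in the strip $\{y : \im(y) < \eps\}$ near $B$. All four localization events hold simultaneously with uniformly positive conditional probability.

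On this good event, the final resampled curves $\eta_2', \wt{\eta}_2'$ are $\SLE_{\kappa'}(\kappa'-6)$ processes from $0$ to $i\ell$ in domains that agree with $D$ outside thin strips of width $\eps$ near $T$ and $B$. As in the final coupling step of Proposition~\ref{prop:markov_kernel}, absolute continuity of $\SLE_{\kappa'}(\kappa'-6)$ restricted to paths that stay in the bulk region $\{\eps < \im(y) < \ell - \eps\}$ for most of their length, combined with a further iteration of $\Psi_1$ and $\Psi_2$ to synchronize the short segments of the curves inside the perturbed strips near $T$ and $B$, allows one to couple $\eta_2'$ and $\wt{\eta}_2'$ to coincide with positive probability. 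This shows that $\nu$ and $\wt{\nu}$ are not mutually singular, and therefore $\nu = \wt{\nu}$. The main obstacle is this final coupling: because $\eta'$ must traverse both perturbed strips in order to connect $0$ to $i\ell$, a direct absolute-continuity bound on the full curve is insufficient, and one needs an additional resampling iteration using $\Psi_1$ and $\Psi_2$ to handle the short segments of $\eta'$ in the perturbed regions in a compatible way; this is the essential adaptation relative to the two-paths case.
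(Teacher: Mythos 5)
Your framework (ergodic decomposition, showing two extremal invariant measures are not mutually singular) matches the paper's, but the final coupling step has a genuine gap, and it sits exactly where the paper's key idea lies. Localizing the retained $T$-loops near $T$ and the retained $B$-loops near $B$ does not make the two conditional laws of the resampled curves equal, only "close" in an informal sense, and that is not enough: after the $\Psi_2$ step each curve is (the time-reversal of) a boundary-hitting $\SLE_{\kappa'}(\kappa'-6)$ in the complement of its \emph{own} retained $B$-loop configuration, and it must terminate at $0 \in B$, i.e.\ inside the perturbed strip, where it almost surely traces along the boundaries of those loops. Two such laws corresponding to two different retained-loop configurations are mutually singular, so no coupling whatsoever can make the full resampled curves coincide with positive probability unless the retained-loop configurations themselves are identical; and those configurations are $\BCLE_\kappa(-\kappa/2)$'s drawn in the components of $D \setminus \eta'$ and $D \setminus \wt{\eta}'$ adjacent to $B$, which are different domains, so they cannot be coupled to be identical either. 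Your proposed remedy, "a further iteration of $\Psi_1$ and $\Psi_2$ to synchronize the short segments in the perturbed strips," is not available: each application of $\Psi_1$ or $\Psi_2$ resamples the \emph{entire} curve given the retained loops, so the dynamics has no move that adjusts only the pieces of the curve near $T$ or $B$, and the same obstruction (non-identical conditional laws) reappears at every iteration. Likewise, absolute continuity "in the bulk" cannot control the whole curve, since the curve necessarily starts and ends inside the two perturbed strips.

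The paper's resolution is different and is the essential point. In the $\Psi_1$ step one only requires the event that no retained $T$-loop reaches down to $I_{3\ell/4}$; then no retained loop touches $B$, the force point is the corner $1$ for both copies, and the two conditional laws of the resampled curves agree (up to absolute continuity) in the lower part of the rectangle. One can therefore couple the two initial segments, run until they first hit $I_{\ell/2}$, so that with positive probability they are \emph{identical} and hit $R$, hence disconnect $T$ from $B$; the continuations then stay in the component containing $i\ell$, so the components of the complements adjacent to $B$ are the \emph{same} for the two copies. This exact agreement is what makes the subsequent $\Psi_2$ step work: the retained $B$-touching loops are sampled in identical domains and can be coupled to be literally equal, the conditional laws of the full curve given these loops are then literally equal, and the two curves can be resampled to coincide exactly. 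No absolute-continuity argument is used for the final full-curve coupling; exactness comes from identical conditional laws, which your localization-only scheme does not produce.
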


Most of the proof of this statement is almost identical to the previous arguments, except for the final resampling argument, that we now describe in more detail:

Suppose that $\eta'$ and $\wt \eta'$ are two independent samples of two probability measures $\pi$ and $\wt \pi$ that satisfy the conditions of the proposition. In order to prove that $\pi = \wt \pi$, it is sufficient to show that these two probability measure do not have disjoint support.  For this we just need to show that by performing a resampling step corresponding to  $\Psi_1$ and then a resampling step corresponding to $\Psi_2$, we can arrange so that the obtained paths coincide with positive probability.  First, we do a resampling step corresponding to $\Psi_1$ as follows. 
\begin{itemize}
\item Given $\eta$ and $\wt \eta$, we sample  $\Gamma,\wt{\Gamma}$ independently. 
Then there is a positive chance that no loops of $\Gamma$ and $\wt \Gamma$ intersects both $T$ and $I_{3\ell/4}$. We call $E'$ this event. 
\item We then resample $\eta'$ and $\wt \eta'$ using the second part of the resampling step $\Psi_1$. We do this independently except when the event $E'$ occurred. 
Let us denote $\tau$, $\wt \tau$ the respective hitting times of $I_{\ell/2}$ by $\eta'$ and $\wt \eta'$. 
By standard absolute continuity properties for $\SLE$, when $E'$ holds,  we can couple $\eta'|_{[0,\tau]}$ with $\wt{\eta}'|_{[0,\wt{\tau}]}$ so that with positive probability, 
 these two portions agree and do intersect $R$ (so that they disconnect $T$ from $B$) -- we call $E$ this event. So, we do couple them in this way. 
\end {itemize} 
We note that for any two continuous curves $\eta'$ and $\wt \eta'$, the probability that in this resampling step corresponding to $\Psi_1$, we obtain that the probability of the event $E$ is strictly positive. 
We then perform a second resampling step corresponding to $\Psi_2$: On the event $E$, we can couple the loops of $\Gamma$, $\wt{\Gamma}$ that intersect $B$ in such a way that they are identical (this is just because they are defined in identical domains). 
We can then resample $\eta'$ and $\wt \eta'$ in the second step of $\Psi_2$ in such a way that they coincide. 

To conclude, we can for instance define the resampling kernel $\Psi$ as follows. With probability $1/3$, we do not do anything and keep the configuration as it is, with probability $1/3$ we apply $\Psi_1$ and with probability $1/3$ we apply $\Psi_2$, 
and apply the fact that two extremal ergodic invariant measures with respect to $\Psi$ are either mutually singular or equal. The measures $\pi$ and $\wt \pi$ are then on the  one hand invariant under $\Psi$, 
and the previous argument shows that they are not singular, so that they are necessarily equal.

\section{Some details for Section~\ref{subsec:paths_together}} 
\label{AppB}

We now indicate in a rather informal way the type of ideas that enable to derive the claims made in Section  \ref {subsec:paths_together} about the existence of the constants  $c_3$ and $c_1$.
The goal of this section is to outline the main steps of possible proofs rather than providing full lengthy details. 

\subsection{The existence of $c_1$} 
Let us first indicate one way to derive the claim about the existence of the constant $c_1$ related to the event $G_1$. Recall that the goal is to obtain a lower bound for the  $\p_{\ul z}$ probability of a certain event $G_1$, uniformly over all 4-tuples $\ul z$ of starting points in ${\mathcal T}_{\delta_0}$. 

Let us write $\delta_0' = \delta_0^{1/2}$ and $\delta_0''= \delta_0^{1/4}$. 
A first idea is to show that (provided $\delta_0$ was chosen small enough),  the $\p_{\ul z}$ probability of the event $X$ that the whole strands
stay in the $\delta_0''$ neighborhood of the quarter circles joining $-i$ and $1$, and $i$ and $-1$ respectively is bounded from below, uniformly with respect to  $ \ul z \in {\mathcal T}_{10 \delta_0}$. 
To see this, we can first note that if we consider the configuration with starting 4-tuple $\ul o$, then by the resampling arguments, one can see 
that the $\p_{\ul o}$ probability that the strands do stay in the $\delta_0'$-neighborhood of these quarter-circles is positive (see Figure \ref {sketchAppB0}).  Then, on the event where the entire strand originating from $i$ stays in the $\delta_0'$-neighborhood of its quarter-circle, the conditional law of the strand 
originating at $-i$ is an $\SLE_\kappa$ in the complement of the first strand. In particular, with a positive 
probability (bounded from below), if one explores (without having explored the strand originating from $i$)  the strand originating at $-i$ until it reaches distance $30 \delta_0$ from $-i$, one did on the way 
reach for any $\ul{z} \in {\mathcal T}_{10 \delta_0}$  a tip configuration
that is conformally equivalent to any $\ul z$, and then came back to a configuration that is conformally equivalent to $\ul o$ (note that one just needs to control the cross-ratio between the four 
tips). From this, one gets readily the fact that  $\p_{\ul z}[X]$ is bounded 
from below uniformly in $\ul z \in {\mathcal T}_{10 \delta_0}$.

\begin{figure}[ht!]
\begin{center}
\includegraphics[width=0.35\textwidth]{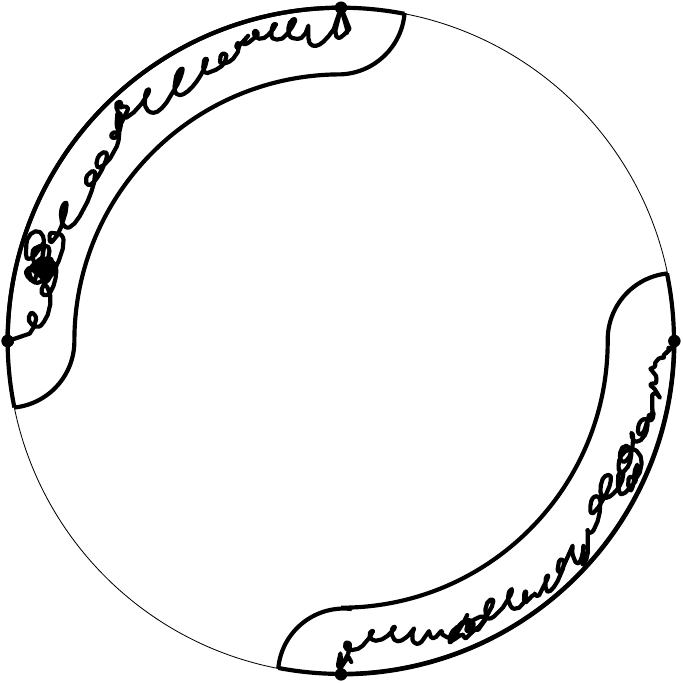} 
\quad
\includegraphics[width=0.35\textwidth]{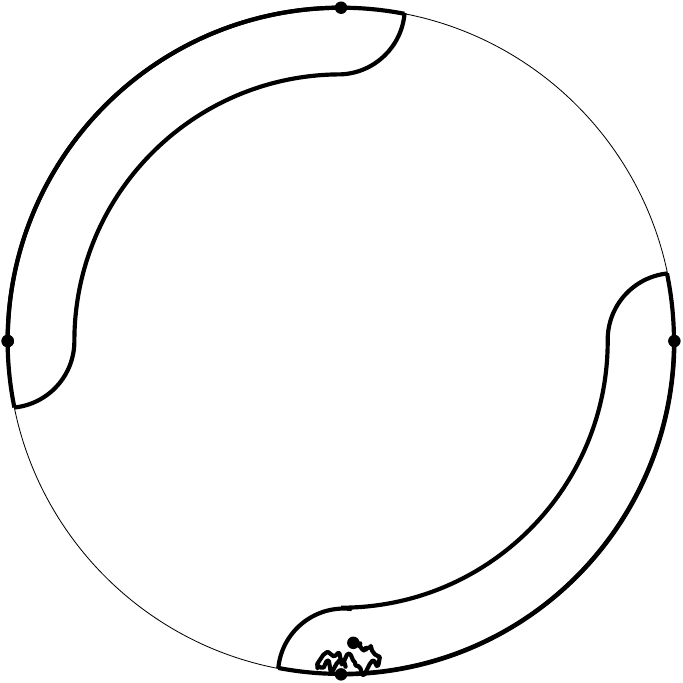} 
\end{center}
\caption{\label{sketchAppB0} The two strands defined under $\p_{\ul o}$ do stay in the neighborhood of the quarter-circles (left).  The exploration near $-i$ that enables to get a lower bound for $\p_{\ul z}[X]$
uniform with respect to 
$\ul z \in {\mathcal T}_{10 \delta_0}$ (right). }
\end{figure}
Let us now suppose that $\ul z \in {\mathcal T}_{\delta_0}$. Recall that $z_1$ is then the point that is $\delta_0$-close to $-i$ and that $z_3$ is $\delta_0$-close to $i$.
When $X$ holds, we can first explore the entire strand that starts from $z_3$, and then the conditional law of 
the one that starts from $z_1$ will be that of an $\SLE_\kappa$ in the remaining domain, and this remaining domain will contain the $\delta_0''$ neighborhood of the quarter-circle from $-i$ to $1$.
We can then use basic absolute continuity estimates between SLE curves in various domains in order to 
conclude that on this event, the law of the strand originating at $z_1$ up to the first time at which it 
reaches distance $99\delta_0$ from $z_1$ is absolutely continuous with respect to that of a radial $\SLE_\kappa$ in the unit disk (targeting the origin) until this hitting time, and the 
Radon-Nikodym derivative between these two laws will be bounded from below and above by absolute constants.

A next step is to notice that for a radial $\SLE_\kappa$ started from $z_1$ and stopped at the deterministic time $\delta_0^2$ (measured by the usual log-conformal radius from the origin as customary for radial Loewner chains), the new position $w$ of the tip (after mapping back via the uniformizing map $\phi_w$ normalized at the origin) is that of a Brownian motion on the unit circle at time $\kappa \delta_0^2$ that was started at $z_1$. Furthermore, on the event that at that time, the probability that the SLE did not yet exit the $99 \delta_0$ neighborhood of $z_1$ is positive, and the conditional law (given that event) of $w$ has a density that is bounded from below by an absolute constant on the arc of length $5 \delta_0$ centered around $z_1$. A further remark is that on this event, the map $\phi_w$ will not move nor distort the neighborhoods of the other three points $z_2$, $z_3$ or $z_4$ by much. More precisely, any point in the $10 \delta_0$ neighborhood of those points will not move by more than $\
delta_0$, and $|\phi_w'|$ will be close to $1$ there (the fact that it is uniformly greater than $1/2$ will be sufficient here). So, on this event, we can think of the new configuration of tips to have points that are at distance less then $\delta_0$ from $z_2$, $z_3$ and $z_4$, and one point which is distributed according to a law that has a density that is bounded from below on the $5 \delta_0$ neighborhood of $z_1$ (importantly, when we will then take the image of this point three times by a map with derivative bounded from below by $1/2$ and that moves the points by less than $\delta_0$, we will still have a density that it bounded from below by a constant on the $\delta_0$-neighborhood of $-i$).

After growing the slit from $z_1$ up to the time $\delta_0$ and then mapping back via $\phi_w$, we can do the same procedure for the strand starting from $\phi_w (z_2)$, and then after mapping back, apply the same procedure iteratively for the two remaining strands. 

Wrapping things up, we get readily that there exists an event $G_1'$ with a probability $p_1 (\ul z) := \p_{\ul z}[G_1']$ that is bounded from below by some constant $\wt p_1$ uniformly with respect to $\ul z \in {\mathcal T}_{\delta_0}$ such that at the end of this procedure, the conditional law of the four new tips $\ul z^1$  has a density $p_2 ( \ul z^1 )$ on ${\mathcal T}_{\delta_0}$ (with respect to the product measure) that is bounded from below by a constant $\wt p_2$. 

Finally, we use a uniform random variable $U$ on $[0,1]$ in order to define the event $G_1$: We define $G_1$ to be  the event $G_1'$ holds and that $U < \wt p_1 \wt p_2 / (p_1 ( \ul z) p_2 (\ul z^1))$ where $\ul z$ is the 4-tuple of starting points and $\ul z^1$ the obtained 4-tuple corresponding to the new tips. In this way, the conditional law of $\ul z^1$ given $G_1$ is indeed uniform on ${\mathcal T}_{\delta_0}$, and the probability of $G_1$ is equal to some positive constant that is independent of $\ul z \in {\mathcal T}_{\delta_0}$.

\subsection{The existence of $c_3$}
Let us now describe ideas that provide the argument for the existence of the constant $c_3$ related to the event $G_3$. 
When $r <1$, $D_r$ and $C_r$ will denote here the disk of radius $r$ around the origin and the circle of radius $r$ around the origin. 

We first note that for each given $\delta' \le \delta_1 / 100$ (which should be thought of as very small compared to $\delta_1$), the set of starting points that are $\delta_1$-separated can be covered by some finite family of sets of starting points $\ul z$ where each $z_j$ lies in some arc of length $\delta'$ and any two arcs are at distance greater than $9 \delta_1/10$ from each other. So, it suffices to prove the lower bound for each of these products-of- $\delta'$-arcs of starting points separately.

Let us fix four such $\delta'$-long boundary arcs. We can then find, for each choice of these four arcs, a pair of well-chosen disjoint deterministic ``tubes of width $\delta_1/4$'' as depicted in 
Figure~\ref{sketchAppB} such the following hold that:
\begin{itemize}
\item They join the boundary arcs pairwise,
\item They reach $C_{1/3}$ around the origin, but not $C_{1/6}$,
\item They stay at distance $\delta_1/4$ of each other,
\item The connected components of the intersections of the tubes in $D_1 \setminus D_{1/2}$ that have respectively $z_1$, 
\ldots, $z_4$ on their boundaries are disjoint and at distance at least $\delta_1/4$ from each other, and
\item They have the property that the distance between each of the four $\delta'$-boundary arcs and the complement of the tubes in the unit disk is at least $\delta_1/10$. 
\end{itemize}

Then, for each pair of such tubes, the probability of the event $U$ that the
two whole strands do stay in the union of these two tubes is positive and bounded from below,
independently of the 4-tuple points $\ul z$ chosen in the four given arcs of length $\delta'$ (using essentially the same ideas as in the case of the event $X'$ that we outlined above).  

Now suppose that we wish to check whether $U$ holds, but do not do it completely, using the following algorithm:  One first explores totally the strand starting from $z_3$, and if it stayed in its tube, then one starts exploring the other arc starting from $z_1$.  Suppose that this second arc went through $C_{1/3}$ and  that  we then stop at the moment when it reaches again $C_{7/12}$ while in the part of the tube near $z_2$ or $z_4$,
and that it stayed in its tube as well until that moment. Let us call $V$ the event that this happens. Note that 
$U \subset V$, so that the probability of $V$ is bounded from below by the probability of $U$.
\begin{figure}[ht!]
\begin{center}
\includegraphics[width=0.35\textwidth]{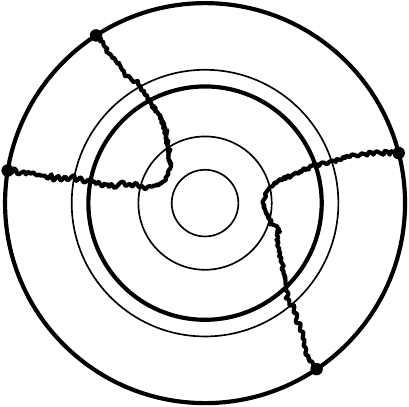} 
\quad
\includegraphics[width=0.35\textwidth]{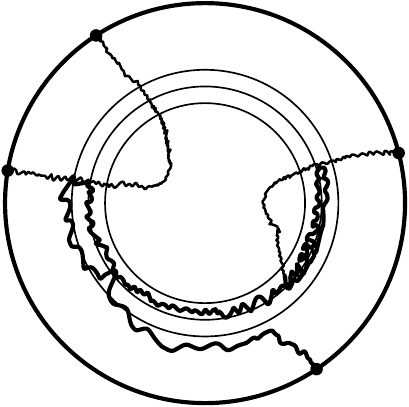} 
\vskip 8mm
\includegraphics[width=0.35\textwidth]{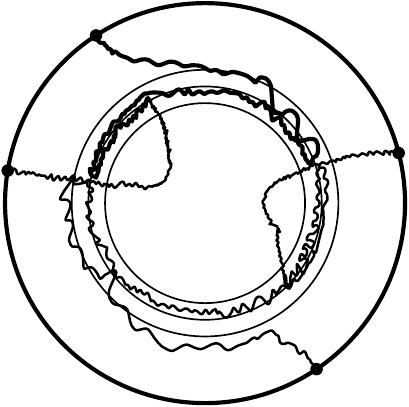} 
\quad
\includegraphics[width=0.35\textwidth]{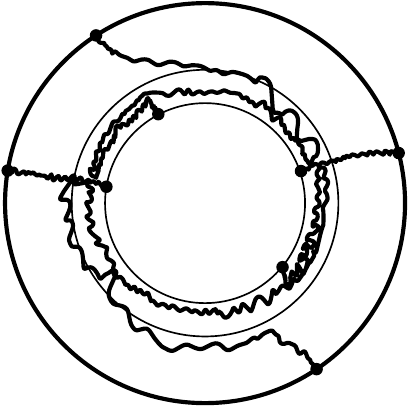} 
\end{center}
\caption{\label{sketchAppB} Sketch of the resampling argument to obtain $G_3$. The two arcs in a tube (and the circles of radii $1/6$, $1/3$, $7/12$ and $2/3$), and then the two steps of the resampling create the event $G_3$. }
\end{figure}Also, the conditional law (given $V$) of the remaining final part of the second strand is just an SLE. 
Using conformal estimates, one can find a lower bound on the conditional probability that this
SLE will not reenter the disc $D_{1/2}$ again, and that before hitting $C_{2/3}$, 
the union of this SLE with the first strand disconnects the whole boundary arc from $z_1$ to $z_3$ 
from the origin in the unit circle (because the probability that the SLE hits the 
corresponding boundary parts of the domain is positive, see Figure \ref {sketchAppB} for a sketch). Hence, the probability of the obtained event $W$ is 
bounded from below.

Then, we can suppose that the event $W$ holds, and we can decide to discover the configuration
by first discovering the strand starting from~$z_1$ totally, and then to discover the strand starting from $z_3$, 
almost until the end and to resample its end part in a similar way as before (see again Figure \ref {sketchAppB}).
Then, the obtained configuration is indeed in~$G_3$. 
Wrapping this up, we get that the probability of~$G_3$ is bounded from below by some absolute constant $c_3$.

\section*{Acknowledgements}
JM and WW respectively thank the FIM at ETH Z\"urich and the Statslab of the University of Cambridge for their hospitality on several occasions during which part of the work for this project was completed.   JM also thanks Institut Henri Poincar\'e for support as a holder of the Poincar\'e chair, during which this work was completed.  JM's, SS's and WW's work were respectively partially supported by the NSF grant DMS-1204894, the NSF grant DMS-1209044 and the SNF grants 155922 and 175505. WW\ is part of the NCCR Swissmap. 
We also express many thanks to the referees for their work and very useful comments.

\bibliographystyle{abbrv}

\end{document}